\newtheorem{theorem}{Theorem}[section]
\newtheorem{proposition}[theorem]{Proposition}
\newtheorem{lemma}[theorem]{Lemma}
\newtheorem{remark}[theorem]{Remark}
\newcommand\N{\mathbb{N}}
\newcommand\R{\mathbb{R}}
\renewcommand\Re{\operatorname{Re}}
\newcommand{\x}{\times}
\def\R {\mathbb{R}}
\def\N {\mathbb{N}}
\def\Re {\mathfrak{Re\,}}
\def\x{\boldsymbol{x}}
\def\d{{\rm d}}
\def\i{{\rm i}}
\def \and {{\qquad\text{and}\qquad}}
\numberwithin{equation}{section}
\theoremstyle{definition}
\title[observability for the  anharmonic oscillator]
{Quantitative observability for the  Schr\"{o}dinger equation with an anharmonic oscillator}
\author[S. Huang]{Shanlin Huang}
\address{Shanlin Huang
	\newline\indent
	School of Mathematics (Zhuhai), Sun Yat-sen University, Zhuhai 519082, Guangdong, China}
\email{shanlin\_huang@hust.edu.cn}
\author[G. Wang]{Gengsheng Wang}
\address{Gengsheng Wang
\newline\indent
Center for Applied Mathematics, Tianjin University, Tianjin 300072, P.R. China}
\email{wanggs62@yeah.net}
\author[M. Wang ]
{ Ming Wang}
\address{Ming Wang
\newline\indent
School of Mathematics and Statistics, HNP-LAMA, Central South University, Chang-sha, Hunan 410083, P.R. China
}
\email{m.wang@csu.edu.cn}
\subjclass[2010]{93B07, 35J10.}
\keywords{Quantitative observability, Schr\"{o}dinger equations, anharmonic oscillators, Ingham inequality, Toeplitz matrices.}
\begin{document}

\begin{abstract}
	This paper studies the observability inequalities for the Schr\"{o}dinger equation associated with an anharmonic oscillator $H=-\frac{\d^2}{\d x^2}+|x|$.  We build up the observability inequality over an arbitrarily short time interval $(0,T)$, with an explicit expression for the observation constant $C_{obs}$ in terms of $T$, 
	for
	some observable set that has a different geometric structure compared to  those discussed in \cite{HWW}. 
	We obtain the sufficient conditions and the necessary conditions for observable sets, respectively.
	We also present counterexamples to demonstrate that half-lines are not observable sets, highlighting a major difference in the geometric properties of observable sets compared to those of Schr\"{o}dinger operators $H=-\frac{\d^2}{\d x^2}+|x|^{2m}$ with $m\ge 1$.
	
	Our approach is based on the following ingredients: First, the use of an Ingham-type spectral inequality constructed in this paper; second, the adaptation of a quantitative unique compactness argument, inspired by the work of Bourgain-Burq-Zworski \cite{Bour13};
	third, the application of the
	Szeg\"{o}'s limit theorem from the theory of Toeplitz matrices, which provides a new mathematical tool for proving counterexamples of observability inequalities.
\end{abstract}

\maketitle

\section{Introduction}\label{sec1}
\subsection{Problem, aim and motivation.}\label{sec1.1}

For the following    Schr\"{o}dinger equations on $\R^n$:
\begin{equation}\label{equ0.1-w}
	\i \partial_t u(t,x) = H u(t,x),\;\; t\in \mathbb{R}^+:=(0,\infty),\; x\in  \mathbb{R}^n; \quad u(0,\cdot)\in L^2(\mathbb{R}^n),
\end{equation}
(where $H=-\Delta+V(x)$, with $V$  a real-valued function), the observability inequality reads:
\begin{align}\label{equ0.2}
	\int_{\mathbb{R}^n}|u(0,x)|^2\,\mathrm dx\leq C_{obs}\int_0^T\int_E|u(t,x)|^2\,\mathrm dx\, \mathrm dt,\;\mbox{when}\;u\;\mbox{solves}\; \eqref{equ0.1-w},
\end{align}
where $E\subset \mathbb{R}^n$ is a measurable subset, $T>0$ and $C_{obs}:=C_{obs}(E,T)>0$. (Here and in what follows, $C(\cdots)$ stands for a positive constant depending on what is enclosed in the brackets.) Several terminologies related to
\eqref{equ0.2} are as follows:
\begin{itemize}
	\item [($\textbf{D}_1$)] A measurable set $E\subset \mathbb{R}^n$  is called \emph{an observable set at some  time} (or \emph{at any time}) for equation
	\eqref{equ0.1-w}, if for some $T>0$ (or for any $T>0$), there exists a constant $C_{obs}=C_{obs}(T,E)>0$ (the control cost) so that
	\eqref{equ0.2} is maintained. If both $E$ and $T$ satisfy \eqref{equ0.2}, then we say that $E$ is \emph{an observable set at $T$}. 
	
	The constant $C_{obs}$ is called the observable constant (which corresponds to the cost of control in controllability). Given a measurable set $E\subset  \mathbb{R}^n$, we let 
	$$
	T_{min}:=T_{min}(E):=\inf\{T\;:\;\eqref{equ0.2} \;\mbox{holds for}\; T\}.
	$$
	Clearly, the set $E$ is observable at any time if and only if $T_{min}(E)=0$, while if $T_{min}(E)=+\infty$, then for any $T>0$, $E$ is not an observable set at $T$. 
\end{itemize}
The study of observability for  Schr\"{o}dinger equations on $\R^n$ has attracted increasing attention in recent years. For the free Schr\"{o}dinger equation on $\R^n$, T\"{a}ufer has shown in \cite{Taufer} that every periodic open set is \emph{an observable set at any time}. For Schr\"{o}dinger equations with periodic $L^{\infty}$ potentials in $\R^2$,   Le Bal\'{c}h and   Martin have proved in \cite{LM} that every periodic translation of a set of positive measures is \emph{an observable set at any time}. Recently, Prouff has investigated in \cite{Pr} the observability of the   Schr\"{o}dinger equations on $\R^n$ for smooth potentials with subquadratic growth. Sufficient and necessary conditions of observable sets are provided, respectively, through the Hamiltonian flow associated with the Schr\"{o}dinger operator. More recently, for  Schr\"{o}dinger equations with continuous and bounded potentials on $\R$, Su, Sun, and Yuan have proved in \cite{SSY} that every thick set is \emph{an observable set at any time}.

To introduce our main results, motivations and related works, we need the following definitions (where $n=1$):
\begin{itemize}
	\item [($\textbf{D}_2$)] A  measurable set  $E\subset\mathbb{R}$ is said to be \emph{thick},
	if there are constants  $\gamma,L>0$ so that\footnote{Here and below, $|E|$ denotes the 1-dim Lebesgue measure of $E$.}
	\begin{align}\label{equ0.4}
		\left|E \cap [x, x+ L]\right|\geq \gamma L\;\;\mbox{for each}\;\;x\in \mathbb{R}.
	\end{align}

	\item [($\textbf{D}_3$)]  
	A  measurable set  $E\subset\mathbb{R}$ is said to be \emph{weakly thick},
	if
	\begin{align}\label{def-1-3}
		\varliminf_{x \rightarrow +\infty} \frac{|E\cap [-x, x]|}{x} >0.
	\end{align}
	\item [($\textbf{D}_4$)]  A  measurable set  $E\subset\mathbb{R}$ is said to be $\alpha$-thin for some $\alpha>0$ if there exists a constant $C>0$ such that
	\begin{align}\label{def-1-4}
		\varlimsup_{|x| \rightarrow +\infty} |E\cap [x, x+1]|\cdot |x|^{\alpha} <C.
	\end{align}
\end{itemize}

\begin{remark}\label{remark1.1-12-16}
	$(i)$  The  interrelations  among thick sets, weakly thick sets and $\alpha$-thin sets are as follows: 
	\begin{equation*}
		E^c\, \mbox{is  $\alpha$-thin for some $\alpha>0$} \Longrightarrow  E\, \mbox{is  thick}  \Longrightarrow  E\, \mbox{is weakly  thick}, 
	\end{equation*}
	but the above implications do not hold in reverse,   see Remark \ref{rmk3.1}.
	Here 
	and throughout the paper, $E^c$ denotes the complement of $E$.
	
	$(ii)$ The half lines $(0,+\infty)$ and $(-\infty,0)$ are weakly thick sets. 
	
	$(iii)$ The
	$\alpha$-thin sets have been used to study the observability for the linear KdV equation in \cite{WW22}.
	Roughly speaking, it measures the sparsity of the set at infinity.
\end{remark}

This paper intends to explore how potentials affect the observability by studying 
the observability inequality 
for 
the $1$-dim Schr\"{o}dinger equation with anharmonic oscillator:
\begin{align}\label{equ1.1}
	\i \partial_t u(t,x)=\left({-\frac{\d^2}{\d x^2}+|x|}\right)u(t,x), \;\; (t,x)\in \mathbb{R}^+\times \mathbb{R}; \quad u(0,x)\in L^2(\R),
\end{align}
which is exactly \eqref{equ0.1-w} with 
\begin{align}\label{equ0.6}
	H=-\frac{\d^2}{\d x^2}+|x|.
\end{align}
The current study is a continuation of the work initiated in \cite{HWW}, where
we characterize observable sets for
the 1-dim Schr\"{o}dinger equation:
\begin{align}\label{1.8-12-16-w}
	\i \partial_t u(t,x)=\left({-\frac{\d^2}{\d x^2}+|x|^{2m}}\right)u(t,x), \;\; (t,x)\in \mathbb{R}^+\times \mathbb{R}; \quad u(0,x)\in L^2(\R),
\end{align}
which is the equation  \eqref{equ0.1-w} with 
\begin{align}\label{equ0.3}
	H=-\frac{\d^2}{\d x^2}+|x|^{2m},\;\;\mbox{where}\;\; m\in \mathbb{N}:=\{0,1,\dots\}
\end{align}
Let us first  recall the main results in \cite{HWW} in the following remark:
\begin{remark}\label{remark1.2-12-16}
	\noindent $(i)$ When $m=1$ ($m\geq 2$ respectively), $E$ is an observable set at some time (at any time, respectively) if and only if it is weakly thick.
	\noindent $(ii)$ When $m=0$, $E\subset\mathbb{R}$ is an observable set at some time if and only if it is thick.  
	
	It is worth mentioning two facts: First,   ``at some time" in $(ii)$ above has been improved to ``at any time" in \cite{SSY}.
	Second, the potentials in \eqref{equ0.3} 
	exhibit quadratic growth when $m=1$ or superquadratic growth when $m>1$. 
\end{remark}

\textbf{Aim and Motivation.} Our aim is to study the geometry for observable sets, as well as the observable constant $C_{obs}$ and $T_{min}$.  
Our motivations are threefold: First, there are three important quantities in \eqref{equ0.2}: $E$, $T_{min}$, and $C_{obs}$.
Identifying the relationship among them goes a long way toward understanding the essence of observability. It has inspired numerous mathematicians to contribute to this field; see \cite{AM,AR,Bour13,Ha,J90,HWW,KDPhung,Pr} and the references therein.  Our previous work \cite{HWW} made some efforts in this direction. 
Second, the potential in \eqref{equ1.1} is the anharmonic oscillator characterized by subquadratic growth, it is
completely different from those in \cite{HWW}, and poses methodological and technical challenges. 
We hope to reveal new phenomena and methods through the study of the observability inequality for the Schr\"{o}dinger equation 
with such potential. Third, the potentials of anharmonic oscillators play significant roles in many branches of physics and are frequently used in the fields of optics, electromagnetic waves, solid crystals, and quantum field theory (see, for instance, \cite{BW,K,VS}).

\subsection{Main results.}\label{sec1.2}

This paper presents two main theorems.  The first theorem addresses the problem by examining it from a forward perspective, while the second theorem offers an analysis from a reverse viewpoint.

\begin{theorem}\label{thm1.1}
	Let $E\subset \R$ be a measurable set. Suppose that $E^c$ is $\alpha$-thin  with $\alpha>\frac12$.
	Then $E$ is an observable set at any time for \eqref{equ1.1}. Moreover, for each $T\in(0,\frac{1}{2})$, we can choose
	\begin{align}\label{equ1.18,1}
		C_{obs}(T,E)=
		\left\{
		\begin{array}{ll}
			\exp\left\{\exp\big[CT^{- \frac{3}{2}(\frac{1}{2}+\frac{3}{2\alpha-1})}\log \frac{1}{T}\big]\right\}, \quad & \alpha\in(\frac{1}{2},1),\\
			\exp\left\{ \exp\big[CT^{- \frac{21}{4}}(\log \frac{1}{T})^{\frac{11}{2}}\big]\right\}, \quad & \alpha=1,\\
			\exp\left\{ \exp\big[CT^{- \frac{21}{4}}\log \frac{1}{T}\big]\right\}, \quad & \alpha>1,
		\end{array}
		\right.
	\end{align}
	where $C>0$ is a constant depending only on $E$ and $\alpha$.
\end{theorem}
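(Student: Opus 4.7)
The plan is to decompose the solution in the eigenbasis of $H=-\partial_x^2+|x|$ along a spectral cutoff $\Lambda=\Lambda(T,\alpha)$ and handle the two pieces by different tools. First I would use that $H$ has compact resolvent with eigenpairs $(\lambda_n,\phi_n)_{n\ge 0}$, Weyl asymptotics $\lambda_n\sim cn^{2/3}$, and a shrinking gap $\lambda_{n+1}-\lambda_n\asymp\lambda_n^{-1/2}$, and expand $u(t,x)=\sum_n a_ne^{-i\lambda_n t}\phi_n(x)$, so that $\|u(0)\|_{L^2}^2=\sum_n|a_n|^2$. I would then split this square norm into $S_{\le\Lambda}:=\sum_{\lambda_n\le\Lambda}|a_n|^2$ and $S_{>\Lambda}$, and aim to bound each piece by $\int_0^T\!\!\int_E|u|^2\,dx\,dt$ separately.

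For the high-frequency tail $S_{>\Lambda}$ I would invoke the Ingham-type spectral inequality constructed in the paper --- a block-type version adapted to the shrinking gap $\lambda^{-1/2}$ --- together with a uniform lower bound $\|\phi_n\|_{L^2(E)}^2\ge c_\alpha>0$ valid for $\lambda_n>\Lambda$. This lower bound I would derive from the Airy/WKB description of $\phi_n$ in its classically allowed region $|x|\lesssim\lambda_n$, combined with the $\alpha$-thin hypothesis on $E^c$, which yields $|E^c\cap[-R,R]|\lesssim R^{1-\alpha}$ for $\alpha\in(\tfrac12,1)$, $R^{1-\alpha}\log R$ for $\alpha=1$, and a bounded quantity for $\alpha>1$; these three profiles are the source of the three regimes in \eqref{equ1.18,1}. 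For the low-frequency head $S_{\le\Lambda}$, which lives in a finite-dimensional subspace of dimension $\sim\Lambda^{3/2}$, I plan to run the quantitative unique-compactness argument of Bourgain--Burq--Zworski. The qualitative injectivity of the restriction-to-$E$ map on this subspace is immediate from real-analyticity of eigenfunctions and $|E|>0$; the quantitative version will track explicit constants through Carleman- or resolvent-type estimates for $H$, whose cost grows doubly exponentially in $\Lambda$, and should give a bound $S_{\le\Lambda}\le e^{e^{C\Lambda^{p(\alpha)}}}\int_0^T\!\!\int_E|u|^2\,dx\,dt$ for an explicit exponent $p(\alpha)$.

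Combining the two bounds and choosing $\Lambda=\Lambda(T,\alpha)$ so as to balance the Ingham-window condition against the double-exponential cost of the compactness step should produce the $C_{obs}(T,E)$ claimed in \eqref{equ1.18,1}. The hard part will be making the BBZ compactness argument genuinely quantitative: the unique-continuation scheme is naturally soft, and extracting explicit double-exponential constants that depend jointly on the spectral cutoff $\Lambda$ and on the geometric parameter $\alpha$ requires unwinding every qualitative ingredient and keeping the dependence explicit at every stage. A secondary but essential obstacle will be the uniform high-energy lower bound $\|\phi_n\|_{L^2(E)}^2\ge c_\alpha>0$ when $\alpha$ is only slightly above $\tfrac12$ --- this is precisely where the hypothesis $\alpha>\tfrac12$ becomes essential, since for $\alpha\le\tfrac12$ the set $E^c$ could still carry enough of the classically allowed region $[-\lambda_n,\lambda_n]$ to allow $\phi_n$ to concentrate there and defeat the lower bound.
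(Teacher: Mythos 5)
Your outline names the right headline ingredients (an Ingham-type spectral inequality, a BBZ-style quantitative argument, a low/high frequency splitting), but it misplaces the mechanism at two decisive points, and these are genuine gaps rather than details. First, you make the high-frequency step rest on a uniform lower bound $\|\varphi_n\|_{L^2(E)}^2\ge c_\alpha$ and claim that this is where $\alpha>\tfrac12$ is essential. That is not correct: by Lemma \ref{lem-low-h} the single-eigenfunction lower bound holds if and only if $E$ is weakly thick, with no reference to $\alpha$ at all, and it is \emph{not} sufficient here --- the half-line $(0,\infty)$ satisfies \eqref{equ1.11} with constant $\tfrac12$ and yet is not observable at any time (Theorem \ref{thm1.2}$(ii)$). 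Because the gaps $\lambda_{k+1}-\lambda_k\to 0$, observability forces control of whole spectral clusters, i.e.\ of the off-diagonal interactions $\int_E\varphi_j\varphi_k\,\d x$ inside the windows $J^\varepsilon_\alpha(\lambda_n)$ of \eqref{equ-92-1.1}; the $\alpha$-thin hypothesis enters exactly there (Lemma \ref{lem-91}), producing the three profiles $\lambda^{-\alpha}$, $\lambda^{-1}\log\lambda$, $\lambda^{-1}$ that generate the three regimes in \eqref{equ1.18,1}. The strict inequality $\alpha>\tfrac12$ is needed so that the admissible window width grows with $\lambda$, which makes the resolvent gain $\Upsilon_2(\lambda,\alpha)\to 0$ and hence yields the relaxed observability inequality at \emph{arbitrarily small} $T$ (Proposition \ref{lem-ob-comp}, Remark \ref{rmk3.2}); at $\alpha=\tfrac12$ one still gets observability at some time (Remark \ref{rmk4.1}). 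Moreover, the cluster inequality \eqref{equ-92-0} is a purely spatial statement; by itself (even with your eigenfunction lower bound) it does not give observability over a short interval $(0,T)$ --- the paper converts it into a resolvent estimate and then, via Fourier transform in time (Burq--Zworski black box), into the relaxed inequality with an $\mathcal H^{-1}$ error, which is the actual high-frequency input. Your proposal has no substitute for this conversion.

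Second, the low-frequency step is left as a hope: ``Carleman- or resolvent-type estimates whose cost grows doubly exponentially in $\Lambda$'' is not an argument, and since $E$ is merely measurable (and the eigenfunctions are not analytic across $x=0$), no off-the-shelf quantitative unique continuation from $E$ is available. The paper avoids spatial propagation of smallness entirely: the low modes are estimated by applying Salem's inequality (Lemma \ref{lem-Salem}, using the gap $\Delta\sim N^{-1/2}$ below frequency $N$) and Nazarov's inequality (Lemma \ref{lem-Nazarov}) to the exponential polynomial in the \emph{time} variable, combined with the diagonal lower bound on $E$; the intermediate frequencies are handled by a Vandermonde time-shift construction, and the cross terms by decay of $\widehat{\eta_T}$. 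The double exponential in \eqref{equ1.18,1} is produced by choosing the intermediate cutoff $M$ exponentially large in $N^{3/2}$ and paying the Vandermonde cost $m^{O(m^2)}$ with $m\sim M^{3/2}$, not by a Carleman-type estimate. Without these (or equivalent) quantitative devices, your scheme does not close, and the claimed constants cannot be extracted.
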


\begin{theorem}\label{thm1.2}
	\noindent $(i)$ If $E$ is an observable set at some time for \eqref{equ1.1}, then $E$ is weakly thick.
	
	\noindent $(ii)$ For each $T>0$, neither  $(0, \infty)$ nor  $(-\infty, 0)$ is 
	{\it an observable set at $T$} for \eqref{equ1.1}.
\end{theorem}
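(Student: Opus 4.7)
The plan is to argue by contraposition, exploiting that high-energy eigenfunctions of $H$ are spread out over the classically allowed region. Let $\{(\lambda_n,\phi_n)\}_{n\ge 0}$ be the $L^2$-orthonormal eigenpairs of $H$, so $\lambda_n\asymp n^{2/3}$ by Bohr-Sommerfeld. Since $u_n(t,x)=e^{-\i\lambda_n t}\phi_n(x)$ is a unit-norm solution of \eqref{equ1.1}, the observability inequality \eqref{equ0.2} would force $1\le C_{obs}(T,E)\,T\,\|\phi_n\|_{L^2(E)}^2$ for every $n$, so it suffices to produce, under the assumption that $E$ is not weakly thick, a subsequence along which $\|\phi_n\|_{L^2(E)}\to 0$. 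The $\phi_n$ are built from shifted Airy functions with turning points at $\pm\lambda_n$, yielding the WKB-type bound $|\phi_n(x)|^2\lesssim 1/\sqrt{\lambda_n(\lambda_n-|x|)}$ on $|x|<\lambda_n$ plus exponential decay outside. Splitting $E$ into a bulk $E\cap[-(1-\varepsilon)\lambda_n,(1-\varepsilon)\lambda_n]$ and two turning-point layers of width $\varepsilon\lambda_n$, the layer contribution is $O(\sqrt{\varepsilon})$ uniformly in $n$, while the bulk contribution is bounded by $\varepsilon^{-1/2}|E\cap[-\lambda_n,\lambda_n]|/\lambda_n$. Failure of weak thickness supplies a subsequence where the second term vanishes; sending first along the subsequence and then $\varepsilon\to 0$ yields the desired contradiction.

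\textbf{Part (ii).} The argument from (i) cannot be repeated because, by parity of $H$, $\int_0^\infty|\phi_n|^2\,\d x=\tfrac12$ for every $n$, so pure eigenfunctions actually satisfy observability on $(0,\infty)$ with constant $2/T$. Instead, the plan is to build wavepackets whose mass is almost entirely concentrated on $(-\infty,0)$, using that consecutive gaps $\lambda_{n+1}-\lambda_n\asymp n^{-1/3}\to 0$ allow such states to remain coherent over arbitrarily long time windows. For large $n,N$, I will consider the Hermitian compression of multiplication by $\mathbf{1}_{(0,\infty)}$ to the spectral window $V_{n,N}=\mathrm{span}\{\phi_n,\ldots,\phi_{n+N-1}\}$,
\begin{equation*}
M_{n,N}=\bigl(\langle \mathbf{1}_{(0,\infty)}\phi_{n+i},\phi_{n+j}\rangle\bigr)_{0\le i,j<N}.
\end{equation*}
Heuristically this is a ``Toeplitz-type'' matrix whose symbol is $\mathbf{1}_{(0,\infty)}$ restricted to a classical orbit of $H$ at energy $\sim\lambda_n$, a two-valued function equal to $0$ on half the orbit and $1$ on the other. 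Szeg\"{o}'s limit theorem, applied in this semiclassical setting, then states that (as $n\to\infty$ with $N$ chosen appropriately) the empirical eigenvalue distribution of $M_{n,N}$ converges to the corresponding measure on $\{0,1\}$; in particular a positive fraction of its eigenvalues $\mu$ cluster near $0$. Picking such an eigenvector produces a unit vector $\psi_n\in V_{n,N}$ with $\|\psi_n\|_{L^2(0,\infty)}^2=\mu_n\to 0$.

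Expanding $u(t,\cdot)=e^{-\i tH}\psi_n$ in the eigenbasis gives
\begin{equation*}
\int_0^T\!\!\int_0^\infty|u(t,x)|^2\,\d x\,\d t = T\mu_n + R_{n,N}(T),
\end{equation*}
where $R_{n,N}(T)$ is the off-diagonal remainder, controlled by $T^2$ times a Schur-type norm of $M_{n,N}$ times the spectral-window width $\lambda_{n+N-1}-\lambda_n$. Choosing $N=N(n,T)$ to grow slowly enough that $\mu_n\to 0$ (via Szeg\"{o}) while $R_{n,N}(T)\to 0$ at the fixed $T$ (which is possible since the window width $\sim Nn^{-1/3}$ can be made tiny while still allowing $N\to\infty$) forces $\int_0^T\int_0^\infty|u|^2\,\d x\,\d t\to 0$ with $\|u(0)\|^2=1$, contradicting observability on $(0,\infty)$ at time $T$. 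The case of $(-\infty,0)$ then follows from the parity symmetry $x\mapsto -x$ of $H$. The main obstacle is quantifying Szeg\"{o}'s asymptotics precisely enough, with explicit remainder control, to carry out this $n$-$N$-$T$ balance --- this is precisely the Toeplitz-matrix input flagged in the introduction.
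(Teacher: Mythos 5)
Your part (i) is essentially the paper's own argument: testing \eqref{equ0.2} on pure eigenfunction solutions reduces the claim to showing that failure of weak thickness forces $\int_E|\varphi_k|^2\,\d x\to 0$ along a subsequence, and your bulk/turning-layer splitting with the bound $|\varphi_k(x)|^2\lesssim \lambda_k^{-1/2}(\lambda_k-|x|)^{-1/2}$ is exactly the computation carried out in the paper's Appendix (Lemma \ref{lem-low-h}, direction $(ii)\Rightarrow(i)$). The only glossed point is passing from $\varliminf_{x\to\infty}|E\cap[-x,x]|/x=0$ to a subsequence of the eigenvalues $\lambda_{n_j}$; since $x\mapsto|E\cap[-x,x]|$ is $2$-Lipschitz and the gaps $\lambda_{k+1}-\lambda_k\to 0$, this is immediate, so part (i) stands.

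In part (ii) you have identified the same key object and tool as the paper (the compression of $\mathbf{1}_{(0,\infty)}$ to a spectral cluster, analyzed via Szeg\"{o}'s limit theorem), and your device of testing the observability inequality directly on a wavepacket $\psi_n$ built from a near-kernel eigenvector, with the off-diagonal time integral controlled by $T^2$ times the window width, is a legitimate alternative to the paper's route through the spectral criterion of Proposition \ref{prop3.1}. However, there is a genuine gap at the heart of the argument: you assert, only heuristically, that $M_{n,N}$ is ``Toeplitz-type'' with symbol a two-valued indicator, and then invoke Szeg\"{o}. The classical Szeg\"{o} theorem applies to finite sections $T_N(f)$ of a \emph{fixed} symbol $f\in L^1$, whereas your matrices have entries depending on $n$ as well as $N$, and nothing in your proposal shows that they have (asymptotically) Toeplitz structure at all, let alone identifies the symbol or shows its infimum is $0$. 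This is precisely the technical core of the paper's proof: the entries $a_{kj}=\int_0^\infty\varphi_{n_0+k}\varphi_{n_0+j}\,\d x$ are computed exactly via the Airy kernel identity \eqref{equ2.13}, together with \eqref{equ2.2}, \eqref{equ2.11} for the normalizations and \eqref{equ2.4} for the gaps, and a sign analysis of $Ai(-\lambda_{2k})$ and $Ai'(-\lambda_{2k\pm1})$ (based on the interlacing of zeros of $Ai$ and $Ai'$) is needed to get $a_{kj}=\frac{\sin\frac{(k-j)\pi}{2}}{\pi(k-j)}+O(\lambda_n^{-1})$ with the correct signs; only then is the matrix a genuine Toeplitz matrix (with symbol $\mathbf{1}_{(-\pi/2,\pi/2)}$, so $\inf f=0$) plus a perturbation of operator norm $O(K_n\lambda_n^{-1})\to 0$, handled by Weyl's inequality, so that Szeg\"{o} yields $\lambda_1(A_{E,n})\to 0$. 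Note also that the symbol could a priori have had positive infimum (in which case no small eigenvalues would exist and your construction would fail), so the symbol identification is not a quantitative refinement you may defer --- it is the step on which the whole counterexample rests. Without this computation, or some substitute argument producing unit vectors in a spectral cluster with vanishing mass on $(0,\infty)$, your part (ii) is an outline rather than a proof.
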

We make the following remarks related to Theorems \ref{thm1.1} and \ref{thm1.2}:
\begin{itemize}
	
	\item [($\textbf{a}_1$)] 
	As far as we are aware, Theorems  \ref{thm1.1}, \ref{thm1.2} seem to be  new. Their proofs are also different from those of previously relevant results,  a point we will elaborate on  in more detail in subsection \ref{sec-compar}.
	Furthermore, Theorem \ref{thm1.1} is robust  under $L^{\infty}$ perturbation, i.e., it remains valid (up to the explicit control cost $C_{obs}$) for the operator $H=-\frac{\d^2}{\d x^2}+|x|+V(x)$, where $V\in L^\infty(\R)$, see Theorem \ref{thm-potent}.

	\item [($\textbf{a}_2$)]
	Theorems  \ref{thm1.1}, \ref{thm1.2}, together with Remarks \ref{remark1.1-12-16}, \ref{remark1.2-12-16}, exhibit the disparities between the observability of the equation \eqref{equ1.1} and the equation \eqref{1.8-12-16-w} (with $m\geq 1$): First, the geometry of the observable sets 
	is different. Second,  the 
	half-line $(0,+\infty)$ (or $(-\infty,0)$)  is \emph{an observable set at $T$} for \eqref{1.8-12-16-w}
	with $m=1$ if and only if $T>\frac{\pi}{2}$ (\cite[Theorem 1.4]{HWW}); it is \emph{an observable set at any time}
	for \eqref{1.8-12-16-w} with $m\ge 2$. However, it is not an observable set for \eqref{equ1.1}, i.e., for each $T>0$, it is not observable at $T$.

	
	\item [($\textbf{a}_3$)]
	The geometry of observable sets for \eqref{equ1.1} and \eqref{1.8-12-16-w} (with $m\geq 1$)
	are closely related to the asymptotic distribution  of the eigenvalues $\{\lambda_k\}_{k=1}^\infty$ of $H$, as well as properties of the corresponding eigenfunctions $\{\varphi_k\}_{k=1}^\infty$ (normalized in $L^2$). Although the precise nature of these dependencies are not fully understood, the following facts provide partial support for this assertion:
	
	{\it Fact 1.} By \cite[Section 4]{HWW}, we have 
	\begin{align}\label{equ1.9}
		\left\{
		\begin{array}{ll}
			\lambda_{k+1}-\lambda_k\rightarrow +\infty,\;\; \mbox{as}\;\; k\rightarrow +\infty, & m\geq 2,\\
			\lambda_{k+1}-\lambda_k= 2, \; \qquad  k\in \mathbb{N}^+,  &  m=1.
		\end{array}
		\right.
	\end{align}
	While when   $m=\frac12$, we have that  (see \eqref{equ2.3} and \eqref{equ2.4})  $\lambda_k\sim k^{\frac23}$ and 
	\begin{align}\label{equ1.10}
		\lambda_{k+1}-\lambda_k\rightarrow 0,\;\; \mbox{as}\;\; k\to \infty, \qquad     m=\frac12.
	\end{align}
	
	{\it Fact 2.}  When  \eqref{equ1.9} holds,  the observability inequality \eqref{equ0.2} is equivalent to the following (see e.g. in \cite[Theorem 1.3]{RTTT}): 
	There exists $C>0$, independent of $k$, so that
	\begin{align}\label{equ1.11}
		\int_E|\varphi_k(x)|^2\d x \geq C\;\;\mbox{ for all }\;\; k\in \mathbb{N}^+.
	\end{align}

	{\it Fact 3.}  When $m=\frac12$,  \eqref{equ1.11} is satisfied with $E=(0,+\infty)$ and $C=\frac12$. This is due to the fact that
	each $\varphi_k(x)$ is either odd or even (see Lemma \ref{lem2.1}).
	However, it follows from  Theorem \ref{thm1.2} that  the observability inequality \eqref{equ0.2}, when applied with $E=(0,+\infty)$,
	does not hold. On the other hand,   taking $u(0, x)=\varphi_k(x)$ in \eqref{equ0.2} leads to \eqref{equ1.11} immediately. 
	Hence,  for the Schr\"{o}dinger equation \eqref{equ1.1},  \eqref{equ1.11} is strictly weaker than  \eqref{equ0.2}.
	This disparity arises because the eigenvalues exhibit sub-linear growth.
	
	We refer to \cite{MR} for a similar situation where the authors construct potentials $V$ on $\mathbb{S}^2$, such that the observability fails for the evolution problem while \eqref{equ1.11} holds for each $L^2$ normalized eigenfunctions.
	We mention that the growth order of the eigenvalues may also have a decisive influence on other control problems such as the rapid stabilization  based on Fredholm backstepping, see \cite{GHXZ}.

\end{itemize}
%
%

\subsection{Strategy and further comments.}\label{sec1.3}
This subsection explains our strategy to prove Theorems \ref{thm1.1} and \ref{thm1.2}, and also compares our approach with those used in related work \cite{SSY,Pr}.
\subsubsection{Strategy to prove Theorem \ref{thm1.1}.}
We outline our strategy in several steps.

{\it Step 1.} We build the following Ingham-type spectral inequality (that is, Proposition \ref{lem-92}): There is $\varepsilon>0$,
$\delta>0$, such that when $E^c$ is $\alpha$-thin (with $\alpha\geq\frac{1}{2}$) and $n\in\mathbb{N}^+$,
\begin{align}\label{equ1.3.0}
	\int_{E}\Big|\sum_{k\in J^\varepsilon_{\alpha}(\lambda_n)}c_k\varphi_k(x)\Big|^2\d x \geq \delta\cdot\sum_{k\in J^\varepsilon_{\alpha}(\lambda_n)}|c_k|^2\;\;\mbox{ for all }\;\; \{c_k\}_{k\in \mathbb{Z}}\in l^2,
\end{align}
where
\begin{align}\label{equ-92-1.1}
	J_{\alpha}^{\varepsilon}(\lambda_n):=\left\{k\in \N^+: |\lambda_k-\lambda_n|<
	\left\{
	\begin{array}{ll}
		\varepsilon \lambda_n^{\alpha-\frac{1}{2}}, \quad & \alpha\in[\frac{1}{2},1),\\
		\varepsilon \lambda_n^{\frac{1}{2}}(\log\lambda_n)^{-1}, \quad &\alpha=1,\\
		\varepsilon \lambda_n^{\frac{1}{2}} , \quad &\alpha>1.
	\end{array}
	\right.
	\right\}.
\end{align}
It can be viewed as a variable coefficient version (associated with the eigenelements of $H$) of the  classical Ingham inequality, 
which states that if
the strictly increasing sequence $\{\lambda_k\}_{k\in\mathbb{Z}}$ of real numbers satisfies the `gap' condition
$
\lambda_{k+1}-\lambda_k\ge \gamma,\;\;\forall\; k\in \mathbb{Z}
$
for some fixed $\gamma>0$.
Then, for all $T>\frac{2\pi}{\gamma}$, there exists some positive constant $\delta>0$ depending only on $\gamma$ and $T$ such that (see e.g. in  \cite{In} and \cite[p. 162]{KL})
\begin{align*}\label{equ1.3.2}
	\int_{0}^{T}\left|\sum_{k\in \mathbb{Z}}c_ke^{\i\lambda_kx}\right|^2\d x \geq \delta\cdot\sum_{k\in \mathbb{Z}}|c_k|^2,\;\; \mbox{for all}\;\;\{c_k\}_{k\in \mathbb{Z}}\in \textit{l}^2.
\end{align*}
The proof of \eqref{equ1.3.0}   combines two main ingredients: establishing uniform lower bounds for diagonal terms and obtaining upper bounds for off-diagonal terms. The lower bounds hold if and only if $E$ is \emph{weakly thick} (see Lemma \ref{lem-low-h}); while for the upper bounds, we require the assumption that $E^c$ is $\alpha$-thin (see Lemma \ref{lem-91}).

It should be mentioned that by \eqref{equ1.3.0}, we can deduce that $E$ is \emph{an observable set at some time}.
However, to strengthen this statement from ``at some time" to ``at any time", additional steps and arguments are needed.

{\it Step 2.} We use
\eqref{equ1.3.0} to obtain a relaxed observability inequality at any time (that is, Proposition \ref{lem-ob-comp}).  The phrase ``relaxed observability inequality" refers to a weaker observability inequality which involves an error  term. This is proved in a similar way to an abstract argument of Burq-Zworski \cite[Theorem 4]{BZ2004} (see also Miller \cite{Miller05}), and finally reduces to a uniform resolvent estimate. The resolvent estimate is obtained by applying the spectral inequality established in Step 1.

{\it Step 3.} We use the aforementioned  relaxed observability inequality to show that $E$ is \emph{an observable set at any time}. In this step, we borrow an idea from the quantitative compactness arguments of uniqueness due to Bourgain-Burq-Zworski \cite[Theorem 4]{Bour13}, where an observability inequality was established for Schr\"{o}dinger equations with rough potentials on 2-tori. These arguments can be seen as a quantitative extension of the classical uniqueness-compactness argument proposed by Bardos–Lebeau–Rauch \cite{BLR}.  With the help of these arguments, we are able to effectively glue the high-frequency and low-frequency estimates together. Furthermore, thanks to explicit information  regarding the distribution of eigenvalues,   we are also able to derive  an explicit expression of $C_{obs}(T,E)$ in terms of $T$, which may be of independent interest. This is achieved by using Salem and Nazarov's inequality (Lemma \ref{lem-Salem}-\ref{lem-Nazarov}) on triangle polynomials; see Section \ref{sec3.3}.

\subsubsection{Idea to prove  Theorem \ref{thm1.2}.}
The proof of the conclusion $(i)$ relies on the asymptotic pointwise behavior of the high-frequency eigenfunctions, which
is achieved by using the properties of Airy functions (see \eqref{equ2.5} and \eqref{equ2.8}).  Since the eigenfunction decays polynomially in the classical allowed region but decays exponentially in the classically forbidden region, we shall deal with them separately.

For the conclusion $(ii)$, we are able to calculate the exact values of the following integrals in terms of the Airy kernel (see \eqref{equ2.12}):
$$
a_{k,j}:=\int_{0}^{\infty}{\varphi_{k}(x)\cdot\varphi_{j}(x)\,\mathrm dx} \quad\text{or}\quad\,\,
\int_{-\infty}^{0}{\varphi_{k}(x)\cdot\varphi_{j}(x)\,\mathrm dx},\,\quad k\ne j\in J_{1/2}^{\varepsilon}(\lambda_n)
$$
(Noticing that $E=(0,+\infty)$ or $(-\infty,0)$.)
They  represent contributions of off-diagonal terms on the left-hand side of \eqref{equ1.3.0}. Then, by taking advantage of the properties of the zeros of the Airy function $Ai(\cdot)$ and of its derivatives $Ai'(\cdot)$,
we can explicitly demonstrate the interaction between two distinct (high-frequency) eigenfunctions, which reflects
the loss of orthogonality in such regions.
Moreover, we observe that the associated gram matrices $(a_{k,j})$ (see \eqref{equ3.3}) are symmetric Toeplitz matrices. With this crucial observation in mind, we then apply the classical Szeg\"{o}'s limit theorem (see Lemma \ref{lem2.2}) to derive the conclusion $(ii)$.

\subsubsection{Comparison with related works}\label{sec-compar}
It is worth mentioning that the method of proving Theorem \ref{thm1.1} is quite different from the methods presented in \cite{SSY,Pr,HWW}, which do not appear to be directly applicable to our case. 

In \cite{Pr}, observability is established by using semi-classical analysis techniques, which require the potentials to be smooth and grow sub-quadratic at infinity. More precisely,
denote by $\left(\phi_{0}^{t}\right)_{t\in \mathbb{R}}$ the Hamiltonian flow associated with the symbol $p_{0}(x,\xi)=V_0(x)+\frac{1}{2}|\xi|^{2}$. For any Borel set $\omega \subset \mathbb{R}^{n}$, 
and  any $T>0$, let
$$
\mathfrak{K}_{p_{0}}^{\infty}(\omega,T)=\liminf_{\rho\rightarrow\infty}\int_{0}^{T}1_{\omega\times \mathbb{R}^{n}}\left(\phi_{0}^{t}(\rho)\right)\d t=\liminf_{\rho\rightarrow\infty}\left|\left\{t\in(0,T):(\pi\circ\phi_{0}^{t})(\rho)\in\omega\right\}\right|.
$$
It is proved in \cite[Theorem 1.3]{Pr} that when the potential $V_0$ is smooth and grows subquadraticly, if there exists \(T_{0}>0\) such that
\begin{equation}\label{eq-prouff}
	\mathfrak{K}_{p_{0}}^{\infty}:=\mathfrak{K}_{p_{0}}^{\infty}(\omega,T_{0})>0,    
\end{equation} 
then for any compact set $K\subset \mathbb{R}^{n}$ and any $T>T_{0}$, the following observability inequality holds:
$$
\exists C>0:\forall u\in L^{2}(\mathbb{R}^{n}),\qquad\left\|u\right\|_{L^{2}(\mathbb{R}^{n})}^{2}\leq C\int_{0}^{T}\left\|e^{-\i tP}u\right\|_{L^{2}(\omega_{R}\backslash K)}^{2}\d t,
$$
where $\omega_{R}$ denotes the thickened set
$\omega_{R}=\bigcup_{x\in\omega}B_{R}(x)$  for some $R>0$ depending on $\mathfrak{K}_{p_{0}}^{\infty}$, and $P=-\Delta+V(x)$, here $V$ and $V_0$ have the same principal symbol, namely ($0\leq m\leq 1$)
$$
\forall \alpha \in \N^n, \exists C_\alpha>0: \forall x \in \R^n, \quad\left|\partial_x^\alpha\left(V-V_0\right)(x)\right| \leq C_\alpha\langle x\rangle^{2 m-1-|\alpha|} .
$$

In general, the method in \cite{Pr} belongs to the time-dependent approach. However, we adopt a resolvent approach and does not require the potential $V$ to be smooth. 
It has the following advantages: (i) the observation region is allowed to be a measurable set, instead of being an open set; (ii) we impose geometric conditions directly on the set itself,  rather than on the dynamical properties of the Hamiltonian flow; (iii) the resolvent estimate is robust and it can handle $L^{\infty}$ perturbations effectively, see Subsection \ref{app-per}.

In \cite{SSY}, the observability inequality associated with thick sets is proved for 1-dim Schr\"{o}dinge equation over $\R$ with real-valued,
bounded continuous potential. The proof there
is based on three key components: (i)  a spectral inequality that addresses the low frequencies of 1-dim Schr\"{o}dinger operators with bounded potentials; (ii) a resolvent estimate for the operator $H=-\partial_x^2+V$ at high frequencies; and
(iii) a quantitative glue argument, which is inspired by the general strategy introduced by Phung \cite{KDPhung}.  It is noteworthy that  the resolvent estimate used in \cite{SSY} reads
\begin{equation}\label{resol-s}
	\exists C>0 : \forall f \in H^2(\mathbb{R}), \forall \mu>\mu_0, \|f\|_{L^2(\mathbb{R})}^2 \leq \frac{C}{\mu} \|(H - \mu)f\|_{L^2(\mathbb{R})}^2 + C\|f\|_{L^2(\omega)}^2,
\end{equation}
where $\mu_0>0$ is a constant and $\omega$ denotes a thick set. Estimate \eqref{resol-s} can be deduced through the direct application of real analysis techniques, as proved in  \cite[Lemma 4.1]{SSY}, or  by invoking the Logvinenko-Sereda uncertainty principle \cite{Gre}. We establish a similar estimate for $H=-\partial_x^2+|x|$ (see Proposition \ref{lem-92-resolve}). However, in contrast to the case that $V$ is bounded, it appears that the aforementioned tools used in the resolvent estimate \eqref{resol-s} cannot be directly applied to derive  Proposition \ref{lem-92-resolve}. This limitation arises because the potential $|x|$ is unbounded, and there is a fundamental difference in the spectral structures between $H_0=-\partial_x^2$ (with a spectrum $\sigma(H_0)=[0, \infty)$) and $H=-\partial_x^2+|x|$ (whose spectrum has only discrete eigenvalues; see \eqref{equ2.1}-\eqref{equ2.2} below).

In our previous work \cite{HWW}, we established the observability inequality for \eqref{1.8-12-16-w}. 
There, we took advantage of the following property: the set $E$ is \emph{an observable set at any time} (\emph{at some time}, respectively) for \eqref{1.8-12-16-w} if and only if \eqref{equ1.11} holds.
However,  \eqref{equ1.11} is only a necessary but not sufficient condition for $E$ to be an observable set for \eqref{equ1.1}. 
Furthermore, we must take into account the contribution of the interactions between distinct eigenfunctions, i.e.
$$
\int_E \varphi_j(x) \varphi_k(x) \d x.
$$
Besides, for $m>1$, the quantitative compactness uniqueness arguments of Bourgain-Burq-Zowski are not needed. This is due to the relation \eqref{equ1.9} and the abstract result \cite[Corollary 6.9.6]{TW}.

\subsection{Plan of the paper}

The remainder of the paper is organized as follows.
In Section \ref{sec2}, we present some auxiliary results and tools that will be used in our subsequent discussions;
Section \ref{sec3}  is dedicated to proving  Theorem \ref{thm1.1};
In Section \ref{sec4}, we give the proof of Theorem \ref{thm1.2}. Finally, Appendix \ref{app-001}-\ref{app-002} contain proofs for two intermediate and technical results.
\section{Auxiliary  results and tools}\label{sec2}
\subsection{Spectral properties of the operator $H=-\frac{\d^2}{\d x^2}+|x|$}\label{sec2.1}
We start with recalling several known facts on the spectral theory of the anharmonic oscillator
$H=-\frac{\d^2}{\d x^2}+|x|$ on $L^2(\mathbb{R})$, which can be found in \cite{CHS,Ga,VS}. 

\noindent{\it Fact 1.} $H$ has a compact resolvent and  the spectrum $\sigma(H)=\{\lambda_k\}_{k=1}^\infty$, with
\begin{align}\label{equ2.1}
	1< \lambda_1<\cdots<\lambda_{2k-1}<\lambda_{2k}\rightarrow +\infty,\quad\,\, k\rightarrow +\infty.
\end{align}
Moreover, each eigenvalue $\lambda_k$ is simple, that is, the dimension of each eigenspace is one.

\noindent {\it Fact 2.} The eigenvalues of $H$ are determined by (see e.g. in  \cite[p.152]{VS})
\begin{align}\label{equ2.2}
	Ai(-\lambda_{2k-1})=0,\quad\,\,Ai'(-\lambda_{2k})=0,\quad\,\ k=1,2,\ldots,
\end{align}
where $Ai(\cdot)$ is the Airy function  given by
\begin{align*}
	Ai(x)=\frac{1}{2\pi}\int_{-\infty}^{\infty}{e^{\i(x\cdot t+t^3/3)}\,\mathrm dt}.
\end{align*}
{\it Fact 3.} It follows from  \cite[Fact 2.7]{Ga} that
\begin{align}\label{equ2.3}
	\lambda_k=\left(\frac{3\pi}{4}\cdot k\right)^{\frac23}+o(k^{\frac23})
\end{align}
and 
\begin{align}\label{equ2.4}
	\frac{\pi}{2}\cdot\lambda^{-\frac12}_{k+1}\leq \lambda_{k+1}-\lambda_k\leq \frac{\pi}{2}\cdot\lambda^{-\frac12}_{k}.
\end{align}
{\it Fact 4.} Let $\varphi_k$ be the $L^2$ normalized eigenfunction corresponding to $\lambda_k$.
By  \cite[Theorem 2.6]{Ga} (or \cite[p.153]{VS}), we have
\begin{equation}\label{equ2.5}
	\varphi_k(x)=\begin{cases}
		A_k\cdot Ai(x-\lambda_k),\;\;\qquad \qquad \quad \text{if}\,\, x\ge 0, \\
		(-1)^{k+1}\cdot A_k\cdot Ai(-x-\lambda_k),\;\;\;\;\, \text{if}\,\,x< 0,
	\end{cases}
\end{equation}
where
\begin{align}\label{equ2.6}
	A_k=\left(2\int_{-\lambda_k}^{\infty}{|Ai(x)|^2\,\mathrm dx}\right)^{-\frac12}.
\end{align}

The next lemma gives some properties on $\varphi_k(x)$ which will be used later. 
\begin{lemma}\label{lem2.1}
	\noindent $(i)$ Each eigenfunction $\varphi_k$ is even or odd. Precisely, we have
	\begin{align}\label{equ2.7}
		\varphi_{k}(x)=(-1)^{k+1}\varphi_{k}(-x),\,\,\,\,\,x\in\mathbb{R}.
	\end{align}

	\noindent $(ii)$  When $x>0$, we have
	\begin{equation}\label{equ2.8}
		\varphi_{k}(x)=\begin{cases}
			a_{k}\cdot(\lambda_k-x)^{-\frac14}\left(\sin(\frac23(\lambda_k-x)^{\frac32}+\frac{\pi}{4})+R_k(x)\right),\,\,\,0<x<\lambda_k- \delta,\\
			O(\lambda_k^{-\frac{1}{4}}),\,\,\,\,\,\,\,\qquad\qquad\qquad\qquad\qquad\qquad \qquad\lambda_k- \delta\leq x\leq \lambda_k+ \delta,\\
			a_{k}\cdot(x-\lambda_k)^{-\frac14}e^{-\frac23(x-\lambda_k)^{\frac32}}\left(1+R_k(x)\right),\qquad\qquad\quad\,\,x>\lambda_k+ \delta,
		\end{cases}
	\end{equation}
	where $\delta>0$ is a small but fixed constant  independent of $\lambda_k$ and $x$. Moreover,
	\begin{align}\label{equ2.9}
		|a_k|\sim \lambda_k^{-\frac{1}{4}},\,\,\,\,R_k(x)=O\left(|x-\lambda_k|^{-\frac32}\right).
	\end{align}
\end{lemma}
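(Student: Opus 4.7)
The plan is to prove part (i) directly from the piecewise representation \eqref{equ2.5}, and to derive part (ii) by substituting the classical uniform asymptotic expansions of the Airy function $Ai$ into \eqref{equ2.5}--\eqref{equ2.6}.

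For part (i), for any $x>0$ the formula \eqref{equ2.5} gives $\varphi_k(-x)=(-1)^{k+1}A_k Ai(x-\lambda_k)=(-1)^{k+1}\varphi_k(x)$, and since $[(-1)^{k+1}]^2=1$ the identity \eqref{equ2.7} follows for all $x\in\R$ (the case $x<0$ is symmetric, and the case $x=0$ follows by continuity of $\varphi_k$). An alternative conceptual route is to observe that $|x|$ is even, so the parity operator $P\psi(x):=\psi(-x)$ commutes with $H$; because each eigenvalue of $H$ is simple by Fact~1, every $\varphi_k$ is a $P$-eigenvector with eigenvalue $\pm 1$, and the sign pattern $(-1)^{k+1}$ can be read off from \eqref{equ2.5}.

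For part (ii), I would invoke the standard Airy asymptotics (see, e.g., Olver or Abramowitz--Stegun): in the oscillatory regime $y\to-\infty$, $Ai(y)=\pi^{-1/2}(-y)^{-1/4}\bigl[\sin\bigl(\tfrac{2}{3}(-y)^{3/2}+\tfrac{\pi}{4}\bigr)+O(|y|^{-3/2})\bigr]$; on any compact strip $|y|\le \delta$ one has $Ai(y)=O(1)$; and in the decaying regime $y\to+\infty$, $Ai(y)=\tfrac{1}{2\sqrt{\pi}}y^{-1/4}e^{-\frac{2}{3}y^{3/2}}\bigl[1+O(y^{-3/2})\bigr]$. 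Substituting $y=x-\lambda_k$ into \eqref{equ2.5} yields the three branches of \eqref{equ2.8}, with $a_k$ equal to $A_k/\sqrt{\pi}$ in the oscillatory region and to $A_k/(2\sqrt{\pi})$ in the decaying region (both satisfying the same scaling), and with the remainder $R_k(x)=O(|x-\lambda_k|^{-3/2})$ inherited directly from the Airy error terms.

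To establish $|A_k|\sim\lambda_k^{-1/4}$ (and hence $|a_k|\sim\lambda_k^{-1/4}$), I start from \eqref{equ2.6} and split $A_k^{-2}=2\int_{-\lambda_k}^{-1}|Ai(y)|^2\,\d y+O(1)$, where the $O(1)$ absorbs the bounded turning-point strip and the exponentially decaying tail. Inserting the oscillatory asymptotic gives $|Ai(y)|^2\sim\pi^{-1}(-y)^{-1/2}\sin^2\bigl(\tfrac{2}{3}(-y)^{3/2}+\tfrac{\pi}{4}\bigr)$; writing $\sin^2=\tfrac{1}{2}(1-\cos(\cdot))$ isolates the main term $\tfrac{1}{2\pi}\int_1^{\lambda_k}t^{-1/2}\,\d t\sim\tfrac{\sqrt{\lambda_k}}{\pi}$, while a single integration by parts in the phase variable $t=(-y)^{3/2}$ bounds the oscillating remainder by $O(1)$. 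Hence $A_k^{-2}\sim\sqrt{\lambda_k}$, giving the claimed scaling. The main technical point that needs care is the uniformity of all $O(\cdot)$ constants in $k$ and in $x$, and in particular that $\delta$ can be chosen small yet independent of $k$; this is afforded by the fact that the Airy asymptotics are uniform on the half-lines $\{y\le -\delta\}$ and $\{y\ge \delta\}$, together with the uniform boundedness of $Ai$ on the compact set $\{|y|\le \delta\}$.
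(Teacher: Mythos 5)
Your proposal is correct and follows essentially the same route as the paper: part (i) is read off directly from \eqref{equ2.5}, and part (ii) is obtained by inserting the standard Airy asymptotics (the paper's \eqref{equ2.10}) into \eqref{equ2.5}, together with the estimate $\int_{-\lambda_k}^{\infty}Ai^2(y)\,\d y\sim\lambda_k^{\frac12}$, which via \eqref{equ2.6} gives $A_k\sim\lambda_k^{-\frac14}$. The only differences are cosmetic: you add the parity-operator remark for (i) and spell out the normalization integral (the $\sin^2=\tfrac12(1-\cos)$ splitting and integration by parts) that the paper states without detail.
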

Here and in what follows, given the sequences of numbers $\{\alpha_k\}$ and $\{\gamma_k\}$, by $\alpha_k\sim \gamma_k$, we mean that
there are $C_1>0$ and $C_2>0$ so that $C_1|\gamma_k|\leq |\alpha_k|\leq C_2|\gamma_k|$ for all $k$, while by $\alpha_k=O(\gamma_k)$,
we mean that
there is $C_3>0$ so that $|\alpha_k|\leq C_3|\gamma_k|$ for all $k$.

\begin{proof}
	Conclusion $(i)$ follows directly from \eqref{equ2.5}. 
	
	To show the conclusion $(ii)$, we recall the following asymptotic expression of  $Ai(\cdot)$
	(see, e.g., \cite[Chapter 2]{VS}):
	\begin{equation}\label{equ2.10}
		Ai(-x)=\begin{cases}
			\pi^{-\frac12}\cdot x^{-\frac14}\left(\sin{\frac23x^{\frac32}}+R_k(x)\right),\qquad \qquad\, x>\delta,\\
			O(1),\,\,\,\,\,\,\,\qquad\qquad\qquad\qquad \qquad -\delta\leq x\leq\delta,\\
			\pi^{-\frac12}\cdot (-x)^{-\frac14}e^{-\frac23x^{\frac32}}\left(1+R_k(x)\right),\qquad\quad x<-\delta,
		\end{cases}
	\end{equation}
	where $\delta>0$ is a small but fixed constant, and $R_k(x)=O(|x|^{-\frac32})$.
	Meanwhile, by \eqref{equ2.10}, we have 
	\begin{align}\label{equ2.10.1}
		\int_{-\lambda_k}^{\infty}{Ai^2(x)\,\mathrm dx}=\int_{-\lambda_k}^1{Ai^2(x)\,\mathrm
			dx}+\int_{1}^{\infty}{Ai^2(x)\,\mathrm dx}\sim \lambda_k^{\frac12}.
	\end{align}
	This, together with \eqref{equ2.6}, implies that
	\begin{equation}\label{equ2.10.2}
		A_k\sim \lambda_k^{-\frac14}.
	\end{equation}
	
	Now, \eqref{equ2.8} and \eqref{equ2.9} follow from \eqref{equ2.5}, \eqref{equ2.10} and \eqref{equ2.10.2}. This
	completes the proof of Lemma \ref{lem2.1}.
\end{proof}

We will also use the following properties on the Airy function and $A_k$ (given by \eqref{equ2.6}) later. 
First, we have
\begin{equation}\label{equ2.11}
	\int_{-\lambda_k}^{\infty}{Ai^2(x)\,\mathrm dx}=\lambda_k\cdot Ai^2(-\lambda_k)+Ai'^2(-\lambda_k)=\begin{cases}
		\lambda_k\cdot Ai^2(-\lambda_k),\,\,\,\text{if $k$ is even},\\
		Ai'^2(-\lambda_k),\quad\,\,\,\,\,\text{if $k$ is odd}.
	\end{cases}
\end{equation}
In fact, the first equality follows from the integration by parts and the fact that $Ai''(x)=x\cdot Ai(x)$, while the second follows from \eqref{equ2.2}.
Second, we have that 
(see e.g. in \cite{TW94}):
\begin{equation}\label{equ2.12}
	Ai(x,y)=
	\begin{cases}
		\frac{Ai(x)Ai'(y)-Ai(y)Ai'(x)}{x-y}, & x\ne y, \\
		Ai'(x)^2-xAi(x)^2 & x=y. \\
	\end{cases}
\end{equation}
Third, by
\cite[p. 165]{TW94}, we also have
\begin{align}\label{equ2.13}
	Ai(x,y)=\int_0^{\infty}{Ai(t+x)Ai(t+y)\,\mathrm dt}.
\end{align}

\subsection{Spectral property of Hermitian Toeplitz matrix and Szeg\"{o}'s theorem}\label{sec2.2}
We recall that a Toeplitz matrix is an $n \times n$ matrix $T_n = (a_{k,j})_{k,j =1}^n$
satisfying
$$a_{k,j} = a_{k-j},\qquad 1\leq k, j\leq n,$$
i.e., a matrix of the form
\begin{align}\label{equ2.14-1}
	T_n=T_n[a_{1-n},\cdots,a_{-1}, a_0, a_1,\cdots, a_{n-1}]:=\begin{bmatrix}
		a_0 & a_{-1} & a_{-2} & \ldots &a_{1-n} \\\
		a_{1} & a_0 & a_{-1} & \ddots & \vdots \\\
		a_{2} & a_{1} & \ddots & \ddots & a_{-2} \\\
		\vdots & \ddots & \ddots  & a_0 & a_{-1}\\\
		a_{n-1} & \ldots  & a_{2} & a_{1} & a_0
	\end{bmatrix}.
\end{align}
Given a Lebesgue integrable complex valued function $f\in L^1[-\pi, \pi]$, we let
\begin{align}\label{equ2.15}
	T_n(f):=T_n[a_{1-n},\cdots,a_{-1}, a_0, a_1,\cdots, a_{n-1}],
\end{align}
with 
\begin{align}\label{equ2.16}
	a_k=\frac{1}{2\pi}\int_{-\pi}^{\pi}{f(t)e^{-\i kt}\,\mathrm dt},\;\;k=0, \pm 1, \cdots, \pm (n-1).
\end{align}
We say that $T_n(f)$ is generated by $f$. We call $T_n(f)$ 
the $n$-th finite section of the Toeplitz matrix associated with the function $f$ and $f$ the symbol of the matrix $T_n(f)$.
One can directly check that if $f$ is real valued, then $\bar{a}_k=a_{-k}$, i.e.,  $T_n(f)$ is Hermitian, and that if $f$ is an even and real-valued function, then $T_n(f)$ is a real matrix.

The asymptotic behavior of the eigenvalues of $T_n(f)$ as $n\rightarrow\infty$  has been
thoroughly studied by mathematicians and physicists for a long time.
In this paper,
we shall use  the following fundamental result (due to  Szeg\"{o}, see \cite[p. 65]{GS}), which describes the  asymptotic behavior of the extreme eigenvalues of the eigenvalues
of the Hermitian Toeplitz matrix
$T_n$  as $n\rightarrow\infty$.
\begin{lemma}[\textbf{Szeg\"{o}'s limit theorem}]\label{lem2.2}
	Assume that $f\in L^1[-\pi, \pi]$ is real valued. Let
	\begin{align}\label{equ2.17}
		\lambda_{\text{min}}=\lambda_1(T_n)\leq \cdots\leq \lambda_n(T_n)=\lambda_{\text{max}}
	\end{align}
	denote the eigenvalues of the Hermitian Toeplitz matrix $T_n(f)$ given by \eqref{equ2.15} and \eqref{equ2.16}.  If we set
	\begin{align}\label{equ2.18}
		m_f=\inf_{x\in(-\pi,\, \pi)} f(x),\qquad M_f=\sup_{x\in(-\pi,\,\pi)} f(x).
	\end{align}
	Then for all $n\ge 1$,
	$$m_f\leq \lambda_1(T_n)\leq \lambda_n(T_n)\leq M_f.
	$$
	Moreover, one has
	\begin{align}\label{equ2.19}
		\lim_{n\rightarrow \infty}\lambda_1(T_n)=m_f,\qquad \lim_{n\rightarrow \infty}\lambda_n(T_n)=M_f.
	\end{align}
\end{lemma}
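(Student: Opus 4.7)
The plan is to exploit the quadratic-form representation of $T_n(f)$: for any $c=(c_0,\ldots,c_{n-1})^T\in\mathbb{C}^n$, setting $p_c(t):=\sum_{k=0}^{n-1}c_k e^{\i k t}$, the definitions \eqref{equ2.15}--\eqref{equ2.16} give (via the convolution-compression picture, or directly by expanding)
\[
\langle T_n(f) c, c\rangle \;=\; \frac{1}{2\pi}\int_{-\pi}^{\pi} f(t)\,|p_c(t)|^2\,\d t, \qquad \|c\|_{\ell^2}^2 \;=\; \frac{1}{2\pi}\int_{-\pi}^{\pi} |p_c(t)|^2\,\d t,
\]
the second being Parseval's identity. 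From $m_f\le f\le M_f$, I would then read off $m_f\|c\|^2\le \langle T_n(f) c,c\rangle\le M_f\|c\|^2$ for every $c$, and the variational (Rayleigh--Ritz) characterization of the extreme eigenvalues of the Hermitian matrix $T_n(f)$ at once produces the uniform two-sided bound $m_f\le \lambda_1(T_n)\le \lambda_n(T_n)\le M_f$ for all $n\ge 1$.

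For the limit $\lim_n\lambda_n(T_n)=M_f$, the strategy is to construct, for each $\varepsilon>0$, a sequence of trial vectors whose Rayleigh quotient exceeds $M_f-2\varepsilon$ for all sufficiently large $n$. Interpreting $M_f$ as the essential supremum, the set $\{t\in(-\pi,\pi): f(t)>M_f-\varepsilon\}$ has positive Lebesgue measure, so I can select a Lebesgue point $t_0$ of $f$ lying in it. I would then take $c^{(n)}_k:=\tfrac{1}{\sqrt{n}} e^{-\i k t_0}$, so that $\|c^{(n)}\|^2=1$ and
\[
|p_{c^{(n)}}(t)|^2 \;=\; \frac{1}{n}\left|\frac{\sin(n(t-t_0)/2)}{\sin((t-t_0)/2)}\right|^2 \;=\; F_n(t-t_0),
\]
which is precisely the Fej\'er kernel centred at $t_0$. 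Since $\{F_n\}$ is an approximate identity with $\tfrac{1}{2\pi}\int F_n\,\d t = 1$, the Lebesgue-point property of $t_0$ yields
\[
\langle T_n(f) c^{(n)}, c^{(n)}\rangle \;=\; \frac{1}{2\pi}\int_{-\pi}^{\pi} f(t)\,F_n(t-t_0)\,\d t \;\longrightarrow\; f(t_0) \;>\; M_f-\varepsilon
\]
as $n\to\infty$. Combined with $\lambda_n(T_n)\le M_f$, letting $\varepsilon\downarrow 0$ gives the first equality in \eqref{equ2.19}. The second equality follows by applying the just-proved first one to $-f$, which negates $T_n(f)$ and swaps the roles of $\lambda_1$ and $\lambda_n$.

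The main technical delicacy is the interplay between the extremal quantities $m_f, M_f$ in \eqref{equ2.18} and the mere $L^1$ regularity of $f$: the argument genuinely requires $M_f$ to be interpreted as the essential supremum, so that super-level sets of $f$ close to $M_f$ carry positive measure and almost every point of them is a Lebesgue point of $f$. This is precisely the mechanism that converts the uniform upper bound $\lambda_n(T_n)\le M_f$ into sharp asymptotic equality. Once Lebesgue differentiation is in place, the Fej\'er kernel provides a canonical concentrating trial polynomial; alternative proofs based on Ces\`aro means of the Fourier series of $f$ are available but obscure this concentration picture.
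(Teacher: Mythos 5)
The paper does not prove this lemma at all: it is quoted as a classical result with a citation to Grenander--Szeg\H{o} \cite{GS}, so there is no internal proof to compare against. Your argument is a correct, self-contained rendering of the standard proof of that classical theorem, and it is essentially the same mechanism as in \cite{GS}: the identity $\langle T_n(f)c,c\rangle=\frac{1}{2\pi}\int_{-\pi}^{\pi}f(t)|p_c(t)|^2\,\d t$ together with Parseval gives the uniform two-sided bound via Rayleigh--Ritz, and the sharpness of the upper bound follows from trial vectors $c^{(n)}_k=n^{-1/2}e^{-\i kt_0}$, whose squared modulus is the Fej\'er kernel centred at $t_0$, combined with Lebesgue's theorem that Ces\`aro means converge at Lebesgue points; the lower limit then follows by replacing $f$ with $-f$. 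Two small points worth recording: your insistence that $m_f,M_f$ be read as essential bounds is indeed necessary (and is how \cite{GS} states them; the paper's literal pointwise $\inf/\sup$ over $(-\pi,\pi)$ is representative-dependent for $f\in L^1$ and would make the limit assertion false after modification of $f$ on a null set), and when $M_f=+\infty$ your argument still works after the obvious change, namely picking Lebesgue points in $\{f>K\}$ for arbitrary $K$ to conclude $\lambda_n(T_n)\to\infty$.
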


\subsection{Salem and Nazarov inequality}\label{sec2.3}
We will present two well-known results on trigonometric polynomials that will be used in the proof of Theorem \ref{thm1.1}. 
Given $n\in \mathbb{N}^+$, we let 
\begin{align}\label{equ-exp-p}
	p(t)=\sum_{k=1}^n c_ke^{\i\lambda_kt},\;\;t\in\R,
\end{align}
where 
$c_k\in \mathbb{C},\lambda_k\in \R$ and $k=1,2,\cdots,n$. It is an exponential polynomial.
Without loss of generality, we can assume that $\lambda_1<\lambda_2<\cdots<\lambda_n$. The following Salem inequality is stated in Nazarov's work \cite[Sec. 1.1]{Nazarov}. We refer to \cite[p.222]{Zy} for a detailed proof.
\begin{lemma}[Salem inequality]\label{lem-Salem}
	Assume that the sequence $\{\lambda_k\}_{1\leq k\leq n}$ has a uniform gap $\Delta>0$, namely,
	$$
	\inf_{1\leq k\leq n-1}\lambda_{k+1}-\lambda_k\geq \Delta.
	$$
	Let $I\subset \R$ be an interval with the length $|I|\geq 4\pi/\Delta$. Then the exponential polynomial \eqref{equ-exp-p} satisfies
	\begin{equation}\label{equ-salem}
		\sum_{k=1}^n|c_k|^2\leq \frac{4}{|I|}\int_I|p(t)|^2\d t.
	\end{equation}
\end{lemma}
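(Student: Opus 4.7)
My approach is the classical kernel method that underlies Ingham-type inequalities. By translation invariance I may assume $I=[-T/2, T/2]$ with $T\geq 4\pi/\Delta$. I introduce the nonnegative cosine weight
$$K(t)=\cos(\pi t/T)\,\one_I(t),$$
which satisfies $0\le K\le 1$ on $I$. Since $K\le 1$ gives $\int_I K|p|^2\,\d t\le \int_I|p|^2\,\d t$ for free, the task reduces to establishing the lower bound $\int_I K\,|p|^2\,\d t\ \geq\ (T/4)\sum_{k=1}^n|c_k|^2$.

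Expanding in the exponentials produces the quadratic form $\int_I K|p|^2\,\d t=\sum_{j,k}g(\lambda_j-\lambda_k)\,c_j\overline{c_k}$, where $g(\xi):=\int_I K(t)e^{\i\xi t}\,\d t$. A short computation (writing $\cos$ as a sum of exponentials) yields the closed form
$$g(\xi)=\frac{(2\pi/T)\cos(\xi T/2)}{(\pi/T)^2-\xi^2},\qquad g(0)=\frac{2T}{\pi},$$
which is real, even, and decays like $\xi^{-2}$ at infinity. The diagonal of the quadratic form therefore contributes $(2T/\pi)\sum_k|c_k|^2$, and everything hinges on controlling the off-diagonal piece.

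For the off-diagonal terms I plan to use $|g(\xi)|\leq (2\pi/T)/(\xi^2-(\pi/T)^2)$ valid for $|\xi|>\pi/T$, combined with $|c_j\overline{c_k}|\leq(|c_j|^2+|c_k|^2)/2$. Inserting the gap hypothesis $|\lambda_j-\lambda_k|\geq |j-k|\Delta$ and exploiting $(\pi/T)^2\leq \Delta^2/16$ (which follows from $T\Delta\geq 4\pi$), the summation over $j\neq k$ collapses via $\sum_{m\geq 1}m^{-2}=\pi^2/6$ and, after invoking $T\Delta\geq 4\pi$ once more, contributes at most $\tfrac{2\pi T}{45}\sum_k|c_k|^2$. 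Combining diagonal and off-diagonal then gives
$$\int_I K|p|^2\,\d t\ \geq\ \Big(\frac{2T}{\pi}-\frac{2\pi T}{45}\Big)\sum_k|c_k|^2\ =\ \frac{2(45-\pi^2)}{45\pi}\,T\sum_k|c_k|^2\ \geq\ \frac{T}{4}\sum_k|c_k|^2,$$
which is exactly what was needed.

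The only real (and modest) obstacle is arithmetic bookkeeping: one must verify that the threshold $T\geq 4\pi/\Delta$ is sharp enough so that $g(0)=2T/\pi$ strictly dominates the off-diagonal total with at least the margin $T/4$ to spare. The cosine kernel is a natural choice precisely because its Fourier transform $g(\xi)$ is $O(\xi^{-2})$ at infinity, which renders the double sum absolutely convergent with the explicit constant $\pi^2/6$ and leaves comfortable slack (roughly $0.5T$ versus the required $0.25T$) under the stated gap assumption.
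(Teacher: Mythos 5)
Your argument is correct, and it is worth noting that the paper itself gives no proof of this lemma: it is quoted from Nazarov's paper with a pointer to Zygmund for the details. What you have written is a complete, self-contained version of the classical Ingham--Salem kernel argument, and the numerology works. Concretely: after translating so that $I=[-T/2,T/2]$ (harmless, since translation only rotates the phases of the $c_k$), your kernel $K(t)=\cos(\pi t/T)$ satisfies $0\le K\le 1$ on $I$, and the Fourier integral is indeed
\begin{equation*}
g(\xi)=\int_{-T/2}^{T/2}\cos\Big(\frac{\pi t}{T}\Big)e^{\i\xi t}\,\d t
=\frac{(2\pi/T)\cos(\xi T/2)}{(\pi/T)^2-\xi^2},\qquad g(0)=\frac{2T}{\pi}.
\end{equation*}
For the off-diagonal control one needs, and you implicitly use, the two standard facts that monotonicity plus the gap give $|\lambda_j-\lambda_k|\ge|j-k|\Delta$ and that for fixed $j$ each separation $|j-k|=m$ occurs at most twice; with $(\pi/T)^2\le\Delta^2/16$ this yields $\sum_{k\ne j}|g(\lambda_j-\lambda_k)|\le 2\sum_{m\ge1}\frac{2\pi/T}{m^2\Delta^2-(\pi/T)^2}\le\frac{32\pi^3}{45\,T\Delta^2}\le\frac{2\pi T}{45}$, exactly your bound. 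Hence $\int_I K|p|^2\,\d t\ge\big(\tfrac{2T}{\pi}-\tfrac{2\pi T}{45}\big)\sum_k|c_k|^2\approx 0.497\,T\sum_k|c_k|^2\ge\tfrac{T}{4}\sum_k|c_k|^2$, and combining with $\int_I K|p|^2\le\int_I|p|^2$ gives \eqref{equ-salem}. Compared with the paper's route (a citation), your proof buys an explicit, elementary derivation of the constant $4$ under the hypothesis $|I|\ge 4\pi/\Delta$, with comfortable slack; the only polishing I would ask for is to state explicitly the two facts above and the symmetrization identity $\sum_{j\ne k}|g_{jk}|\tfrac{|c_j|^2+|c_k|^2}{2}=\sum_j|c_j|^2\sum_{k\ne j}|g_{jk}|$, so the bookkeeping is airtight.
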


Inequality \eqref{equ-salem} can be viewed as a finite sum analogue of Ingham's inequality (see \cite{In} and \cite[p. 162]{KL}) with explicit constants. We remark that inequalities akin to \eqref{equ-salem}, which do not specify explicit constants, and are also referred to Ingham's inequality in many literatures.

\begin{lemma}[Nazarov inequality \cite{Nazarov}]\label{lem-Nazarov}
	Let $I\subset \R$ be an interval and $E\subset I$ be a measurable set. Then for all exponential polynomials of the type \eqref{equ-exp-p},
	$$
	\int_I|p(t)|^2\d t \leq \left( \frac{A|I|}{|E|}\right)^{2n-1}\int_E|p(t)|^2\d t,
	$$
	where $A>0$ is an absolute constant.
\end{lemma}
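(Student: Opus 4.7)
The plan is to deduce the stated $L^2$ inequality from its $L^\infty$ (Remez-type) analogue, which is the heart of Nazarov's paper \cite{Nazarov}. Concretely, I would establish the pointwise bound $\sup_I|p|\leq (C|I|/|E|)^{n-1}\sup_E|p|$ for every exponential polynomial of the form \eqref{equ-exp-p} with $n$ real frequencies, and then upgrade it to the $L^2$-to-$L^2$ inequality with exponent $2n-1$ by a short measure-theoretic argument.

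For the pointwise step, I would proceed by induction on $n$. The base case $n=1$ is trivial since $|p|$ is constant. For the inductive step, after a translation one may assume that $0\in\{\lambda_k\}$; then the derivative trick applied to $e^{-\i\lambda_1 t}p(t)$ produces an exponential polynomial whose frequency set has one fewer element, which is accessible to the inductive hypothesis. A Tur\'an-type power-sum estimate, combined with a bound-doubling lemma that locates a subinterval $J\subset I$ of length $\sim|E|$ on which $|p|$ is comparable to $\sup_E|p|$, completes the induction with the stated exponent $n-1$ and an absolute constant $C$.

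For the upgrade to $L^2$, set $\lambda^2=\tfrac{2}{|E|}\int_E|p(t)|^2\d t$ and let $E^{*}=\{t\in E:|p(t)|\leq\lambda\}$. Chebyshev's inequality gives $|E\setminus E^{*}|\leq\lambda^{-2}\int_E|p(t)|^2\d t=|E|/2$, hence $|E^{*}|\geq|E|/2$. Applying the pointwise step with $E^{*}\subset I$ in place of $E$ yields
$$\sup_I|p|\leq\left(\frac{C|I|}{|E^{*}|}\right)^{n-1}\sup_{E^{*}}|p|\leq\left(\frac{2C|I|}{|E|}\right)^{n-1}\lambda.$$
Squaring, multiplying by $|I|$, and inserting the definition of $\lambda^2$ gives
$$\int_I|p(t)|^2\d t\leq |I|\sup_I|p|^2\leq 2|I|\left(\frac{2C|I|}{|E|}\right)^{2(n-1)}\frac{1}{|E|}\int_E|p(t)|^2\d t\leq\left(\frac{A|I|}{|E|}\right)^{2n-1}\int_E|p(t)|^2\d t,$$
where absolute constants have been absorbed into $A$; this is exactly the claimed inequality.

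The main obstacle is the pointwise (Remez-type) inequality for exponential polynomials with \emph{arbitrary real} frequencies. Unlike the algebraic polynomial case one cannot invoke Chebyshev extremal polynomials directly; the uneven and possibly rapidly varying frequency gaps force a delicate use of Nazarov's refined Tur\'an power-sum method together with careful tracking of the induction constants so that only an absolute constant survives. The $L^\infty\to L^2$ passage above is by contrast a short and standard level-set argument.
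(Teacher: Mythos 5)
The paper does not prove this lemma at all: it is quoted directly from Nazarov \cite{Nazarov}, so there is no internal argument to compare against. Your route—deriving the $L^2$ statement from the $L^\infty$ (Tur\'an--Nazarov/Remez-type) inequality $\sup_I|p|\leq (C|I|/|E|)^{n-1}\sup_E|p|$ via a Chebyshev level-set argument—is the standard way this $L^2$ form with exponent $2n-1$ is obtained in the literature, and that reduction is correct as written: with $\lambda^2=\tfrac{2}{|E|}\int_E|p|^2$ and $E^*=\{t\in E:|p(t)|\leq\lambda\}$ you indeed get $|E^*|\geq|E|/2$, squaring the pointwise bound contributes the exponent $2(n-1)$, and the extra factor $|I|/|E|\geq 1$ supplies the remaining power so that all constants can be absorbed into a single absolute $A$.

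The gap, if your write-up is meant to be self-contained, is the pointwise inequality itself. The sketched induction (translate so $0$ is a frequency, differentiate $e^{-\i\lambda_1 t}p(t)$ to drop to $n-1$ frequencies, then invoke a Tur\'an power-sum bound and a doubling lemma) does not go through as described: to return from the derivative to $p$ one must integrate, and the loss in that step depends on $|I|\cdot\max_k|\lambda_k|$ (equivalently, the number of real zeros of $p$ on $I$ grows with the spread of the frequencies), so a naive induction cannot produce a constant independent of the $\lambda_k$'s. Obtaining an absolute constant uniform in the frequencies is precisely the content of Nazarov's theorem, and its proof is substantially more involved than the outline you give. The clean fix is to do exactly what the paper does for the $L^\infty$ statement—cite it from \cite{Nazarov} as a black box—and keep your level-set upgrade, which is then a complete and correct proof of the stated $L^2$ inequality.
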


The Nazarov inequality provides a quantitative unique continuation property for exponential polynomials, which is a generalization of Remez's inequality for polynomials.

\section{Proof of Theorem \ref{thm1.1}}\label{sec3}
This section aims to prove Theorem \ref{thm1.1}. The proof is based on the Ingham-type spectral inequality  (Proposition \ref{lem-92}) in Subsection \ref{sec3.1}
and the relaxed observability inequality  (Proposition \ref{lem-ob-comp}) in Subsection \ref{sec3.2}.

\subsection{Ingham-type spectral inequality}\label{sec3.1}
The main result of this subsection is the following Ingham-type spectral inequality:

\begin{proposition}\label{lem-92}
	Assume that $E^c$ is $\alpha$-thin with $\alpha\geq  \frac{1}{2}$. Then there exist  $\varepsilon>0$ and $\delta>0$ such that
	for each $n\in \N^+$,
	\begin{align}\label{equ-92-0}
		\int_E\left|\sum_{k\in  J_{\alpha}^{\varepsilon}(\lambda_n)}c_k\varphi_k(x)\right|^2\d x \geq \delta \sum_{k\in  J_{\alpha}^{\varepsilon}(\lambda_n)}|c_k|^2\;\;\mbox{for all}\;\; \{c_k\}\in l^2,
	\end{align}
	where $J_{\alpha}^{\varepsilon}(\lambda_n)$ is given by \eqref{equ-92-1.1}. 
\end{proposition}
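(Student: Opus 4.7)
My plan is to expand the square as
\[
\int_E\Big|\sum_{k\in J}c_k\varphi_k(x)\Big|^2\,\d x \;=\; \sum_{k,j\in J} G_{kj}\,c_k\overline{c_j},\qquad J:=J_\alpha^\varepsilon(\lambda_n),
\]
where $G_{kj} := \int_E \varphi_k(x)\varphi_j(x)\,\d x$, and to treat $G$ restricted to $J\times J$ as a diagonally dominant perturbation of a positive multiple of the identity. Concretely, I would choose $\varepsilon>0$ small (depending only on $\alpha$ and $E$) so that $G_{kk}\geq 2\delta$ for every $k\in J$, while $\sum_{j\in J,\,j\ne k} |G_{kj}| \leq \delta$ for every $k\in J$; the Schur test (equivalently, Gershgorin applied to $G$ on $\ell^2(J)$) then delivers \eqref{equ-92-0}.

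For the diagonal, since $E^c$ is $\alpha$-thin with $\alpha\geq 1/2$, Remark \ref{remark1.1-12-16}(i) gives that $E$ is weakly thick. I would then prove the uniform lower bound $G_{kk}=\int_E|\varphi_k|^2\,\d x\geq 2\delta$. The mechanism comes from \eqref{equ2.8}: on the classically allowed region $[-\lambda_k+\delta,\lambda_k-\delta]$, $|\varphi_k(x)|^2$ oscillates around $\tfrac12 a_k^2(\lambda_k-|x|)^{-1/2}/\pi$ with $a_k^2\sim \lambda_k^{-1/2}$, so the rescaling $x=\lambda_k y$ converts the integral into a weighted average of $\chi_{E/\lambda_k}$ against $(1-|y|)^{-1/2}$ on $[-1,1]$. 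The weakly thick condition \eqref{def-1-3} forces this average to be bounded below uniformly in $k$; this is the content of the forthcoming Lemma \ref{lem-low-h}.

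For off-diagonal entries with $k\ne j$, $L^2$-orthogonality gives $G_{kj} = -\int_{E^c}\varphi_k\varphi_j\,\d x$. The plan is to estimate this by combining (i) the asymptotic \eqref{equ2.8}, from which $\varphi_k(x)\varphi_j(x)$ has amplitude $\sim \lambda_n^{-1/2}(\lambda_n-|x|)^{-1/2}$ and phase oscillating at beat frequency $|\lambda_k-\lambda_j|$ in the bulk, with (ii) the $\alpha$-thin bound $|E^c\cap[x,x+1]|\lesssim |x|^{-\alpha}$ for large $|x|$. Integrating by parts against a mollified $\chi_{E^c}$ at scale $\sim|\lambda_k-\lambda_j|^{-1}$ extracts a factor $|\lambda_k-\lambda_j|^{-1}$, with the mollification error reabsorbed via the $\alpha$-thin measure, yielding schematically $|G_{kj}|\leq C\,\Phi_\alpha(\lambda_n)/|\lambda_k-\lambda_j|$; the turning-point and forbidden-region contributions are negligible by the $O(\lambda_k^{-1/4})$ bound of \eqref{equ2.8} and the exponential Airy decay. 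This is the content of the forthcoming Lemma \ref{lem-91}. Using the gap \eqref{equ2.4} to turn sums over $J$ into logarithmic integrals, the row sum becomes
\[
\sum_{j\in J,\,j\ne k}|G_{kj}| \;\lesssim\; \Phi_\alpha(\lambda_n)\,\lambda_n^{1/2}\,\log\bigl(\mathrm{width}(J)\cdot\lambda_n^{1/2}\bigr),
\]
and the three thresholds in \eqref{equ-92-1.1} are precisely those for which this stays below $\delta$ uniformly in $n$: width $\lesssim \lambda_n^{\alpha-1/2}$ for $\alpha<1$, width $\lesssim \lambda_n^{1/2}$ for $\alpha>1$, with a logarithmic correction at the borderline $\alpha=1$.

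The principal obstacle is the off-diagonal estimate: producing the $|\lambda_k-\lambda_j|^{-1}$ decay together with the correct $\alpha$-dependent prefactor, uniformly over $k,j\in J$. The two main technical points are handling the possibly irregular boundary of $E^c$ during integration by parts (the mollification scale must be tuned precisely to the beat frequency, and the mollification error must be made compatible with the $\alpha$-thin measure bound) and controlling the transition zone near the turning points $x=\pm\lambda_k,\pm\lambda_j$ where the Airy asymptotic degenerates. The borderline case $\alpha=1$ is especially delicate, because the logarithmic correction in the width of $J$ sits exactly at the convergence threshold of the associated tail integral and pins down the Ingham-type width precisely.
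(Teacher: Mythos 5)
Your overall skeleton (expand the Gram matrix $G_{kj}$, lower-bound the diagonal via weak thickness, upper-bound off-diagonal rows, conclude by diagonal dominance) is the same as the paper's, and your diagonal step is correct: it is exactly Lemma \ref{lem-low-h} combined with Remark \ref{rmk3.1}. The genuine gap is your off-diagonal lemma. You propose $|G_{kj}|\le C\,\Phi_\alpha(\lambda_n)/|\lambda_k-\lambda_j|$ by integrating by parts against a mollified $\chi_{E^c}$ at scale $|\lambda_k-\lambda_j|^{-1}$, and this step would fail. First, the beat phase is $S_k-S_j$ with $|S_k'-S_j'|\approx|\lambda_k-\lambda_j|/(2\sqrt{\lambda_n-x})$, not $|\lambda_k-\lambda_j|$; for nearest-neighbour pairs, where the gap \eqref{equ2.4} gives $|\lambda_k-\lambda_j|\sim\lambda_n^{-1/2}$, the total variation of the beat phase over the whole allowed region is $O(|\lambda_k-\lambda_j|\lambda_n^{1/2})=O(1)$, so there is no oscillation to exploit and no factor $|\lambda_k-\lambda_j|^{-1}$ to extract beyond the trivial bound. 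Second, for a general measurable $\alpha$-thin $E^c$ the mollification error $\|\chi_{E^c}-\chi_{E^c}\ast\phi_\theta\|_{L^1}$ is in general comparable to the full local measure of $E^c$ (take $E^c$ locally a union of intervals much shorter than $\theta$), so the scheme cannot be "reabsorbed via the $\alpha$-thin measure" and does not beat the plain amplitude estimate. Third, your calibration is internally inconsistent: with a $|\lambda_k-\lambda_j|^{-1}$ bound the row sum depends on the window width only through a logarithm, so it cannot "precisely" pin down the widths in \eqref{equ-92-1.1}; in particular the $(\log\lambda_n)^{-1}$ correction at $\alpha=1$ would be invisible in your formula.

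The estimate that is both provable and sufficient is weaker and simpler: a separation-independent bound $|G_{kj}|\lesssim\Upsilon_0(\lambda_n,\alpha)$ uniformly over $\lambda_j,\lambda_k$ in a window of width $\lambda_n^{1/2}$ around $\lambda_n$, as in \eqref{equ-911-1}. It follows by putting absolute values on $\varphi_j\varphi_k$ using \eqref{equ2.8} (amplitude $\lambda_n^{-1/2}(\lambda_n-x)^{-1/2}$ in the bulk, $\lambda_n^{-1/2}$ near the turning point, exponential decay beyond), writing $G_{kj}=-\int_{E^c}\varphi_j\varphi_k\,\d x$ by orthogonality, and summing the $\alpha$-thin bound $|E^c\cap[l-1,l]|\lesssim l^{-\alpha}$ over unit intervals. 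Then the cardinality bound \eqref{equ-92-3.1}, which comes from the gap \eqref{equ2.4}, gives $\sharp J_\alpha^\varepsilon(\lambda_n)\cdot\Upsilon_0(\lambda_n,\alpha)\lesssim\varepsilon$, and it is exactly this product that dictates the three window widths in \eqref{equ-92-1.1}, including the logarithmic loss at $\alpha=1$. With that uniform bound your Schur/Gershgorin (or a Cauchy--Schwarz) step closes the argument upon choosing $\varepsilon$ small; this is precisely the paper's proof via Lemma \ref{lem-91}. So keep your framework, but replace the oscillation-based off-diagonal lemma by the uniform estimate \eqref{equ-911-1}.
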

In order to prove Proposition \ref{lem-92}, we need the following two lemmas which concern the contributions of diagonal terms and off-diagonal terms, respectively.
\begin{lemma}[Lower Bounds for Diagonal Terms]\label{lem-low-h}
	The following statements are equivalent:
	
	\noindent $(i)$ The set $E\subset \R$ is weakly thick, i.e., \eqref{def-1-3} holds.
	
	\noindent $(ii)$ There exists an absolute constant $D_1>0$ independent of $k$ such that
	\begin{align}\label{low-h}
		\int_E|\varphi_k(x)|^2\d x \geq D_1\;\;\mbox{ for all }\;\; k.
	\end{align}
\end{lemma}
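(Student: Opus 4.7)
\emph{Plan.} I will prove the two implications separately.

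\emph{Direction $(ii)\Rightarrow(i)$ by contraposition.} Suppose $E$ is not weakly thick, so there exists $x_j\to\infty$ with $\varepsilon_j:=|E\cap[-x_j,x_j]|/x_j\to 0$. Using \eqref{equ2.3}, pick indices $k_j$ with $\lambda_{k_j}$ comparable to $x_j$. I decompose $\R$ according to the classical picture and apply Lemma \ref{lem2.1} in each region: on the classically forbidden region $\{|x|>\lambda_{k_j}+\delta\}$, exponential decay gives $O(\lambda_{k_j}^{-1/2})$; on the turning-point region $\{||x|-\lambda_{k_j}|\le\delta\}$, the amplitude bound $|\varphi_{k_j}|=O(\lambda_{k_j}^{-1/4})$ gives $O(\delta\lambda_{k_j}^{-1/2})$; on the bulk $\{|x|<\lambda_{k_j}-\delta\}$, the bound $|\varphi_{k_j}|^2\le C\lambda_{k_j}^{-1/2}(\lambda_{k_j}-|x|)^{-1/2}$ together with the rescaling $y=x/\lambda_{k_j}$ reduces the contribution to $C\int_{E/\lambda_{k_j}\cap[-1,1]}(1-|y|)^{-1/2}\,dy$. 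Splitting this integral into the interior $[-1+\eta,1-\eta]$ (dominated by $\eta^{-1/2}\varepsilon_j$) and two boundary collars (dominated by $4\sqrt{\eta}$), the choice $\eta=\varepsilon_j^{1/2}\to 0$ drives both to zero. Thus $\int_E|\varphi_{k_j}|^2\,dx\to 0$, contradicting (ii).

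\emph{Direction $(i)\Rightarrow(ii)$.} Assume $E$ is weakly thick with lower density $\gamma>0$. I split according to whether $\lambda_k\le M$ or $\lambda_k>M$ for a threshold $M$ to be chosen. For the finitely many $k$ in the low-energy regime, $\varphi_k$ is real-analytic on $\R\setminus\{0\}$ with measure-zero zero set; since weak thickness forces $|E|=+\infty$, each $\int_E|\varphi_k|^2>0$, and the minimum over this finite collection is a positive constant. For $\lambda_k>M$, Lemma \ref{lem2.1} gives on the bulk
\begin{equation*}
|\varphi_k(x)|^2=|a_k|^2(\lambda_k-|x|)^{-1/2}\sin^2\!\Big(\tfrac23(\lambda_k-|x|)^{3/2}+\tfrac{\pi}{4}\Big)+\text{error},
\end{equation*}
and the rescaling $y=x/\lambda_k$ together with $|a_k|^2\sim\lambda_k^{-1/2}$ reduces the problem to bounding from below
\begin{equation*}
\int_{A_k}(1-|y|)^{-1/2}\sin^2(\phi_k(y))\,dy,
\end{equation*}
where $A_k=E/\lambda_k\cap[-1+\eta,1-\eta]$ has measure $\ge\gamma-2\eta$ for $k$ large, and $\phi_k(y)=\tfrac23\lambda_k^{3/2}(1-|y|)^{3/2}+\tfrac{\pi}{4}$. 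Writing $\sin^2=\tfrac12-\tfrac12\cos(2\phi_k)$ and using $(1-|y|)^{-1/2}\ge 1$ on $[-1,1]$, the non-oscillatory part already contributes at least $(\gamma-2\eta)/2$.

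\emph{Main obstacle.} The substantive difficulty is to prove
\begin{equation*}
\int_{A_k}(1-|y|)^{-1/2}\cos(2\phi_k(y))\,dy=o(1)\quad\text{as }k\to\infty,
\end{equation*}
uniformly over all measurable sets $A_k\subset[-1+\eta,1-\eta]$. Since $|\phi_k'(y)|\ge\sqrt{\eta}\,\lambda_k^{3/2}$ on the bulk, van der Corput's lemma yields $O(\lambda_k^{-3/2})$ on any single interval. To handle a generic measurable $A_k$, I plan to approximate it from inside by a finite disjoint union $U_{k,N}$ of intervals with $|A_k\setminus U_{k,N}|\le 1/N$ (via regularity of Lebesgue measure), apply van der Corput on each of the $N$ intervals, and control the approximation error through the $L^\infty$-bound $\eta^{-1/2}$ on the amplitude; optimizing $N$ in $\lambda_k$ makes both the oscillatory total and the approximation error tend to zero. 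Once this estimate is established, choosing $\eta<\gamma/2$ fixed and $k$ sufficiently large produces a uniform positive lower bound, and combined with the low-energy minimum yields the desired constant $D_1>0$.
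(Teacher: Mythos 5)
Your direction $(ii)\Rightarrow(i)$ is correct and is essentially the paper's own computation run as a contrapositive: the same three--region splitting from Lemma \ref{lem2.1}, the bulk bound $|\varphi_k(x)|^2\lesssim \lambda_k^{-1/2}(\lambda_k-|x|)^{-1/2}$, and a boundary collar made negligible (your choice $\eta=\varepsilon_j^{1/2}$ plays the role of the paper's $\rho_0$ close to $1$). Picking $\lambda_{k_j}$ as the largest eigenvalue below $x_j$ is legitimate since the gaps in \eqref{equ2.4} are bounded, and this organization even spares you the paper's appeal to \cite[Lemma 4.4]{HWW} to pass from a sequence of radii back to the liminf in \eqref{def-1-3}. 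Your treatment of the finitely many low frequencies (analyticity of $\varphi_k$ away from the origin plus $|E|=\infty$) is also fine.

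The genuine gap is in $(i)\Rightarrow(ii)$, precisely at what you call the main obstacle, and it is not merely an unproven step: the estimate you plan to prove is false. The supremum over measurable $A\subset[-1+\eta,1-\eta]$ of $\bigl|\int_A(1-|y|)^{-1/2}\cos(2\phi_k(y))\,\d y\bigr|$ does not tend to $0$; taking $A=\{y:\cos(2\phi_k(y))\ge 0\}$, equidistribution of the rapidly increasing phase makes this integral at least a fixed positive constant (about $\tfrac1\pi$ times the length of the interval) for large $k$. Since your sets $A_k=E/\lambda_k\cap[-1+\eta,1-\eta]$ genuinely change with $k$, you cannot fall back on the fixed-set Riemann--Lebesgue statement either, and one can arrange a weakly thick $E$ that aligns on large annuli with the sign set of $\cos(2S_{k_j})$ for a sparse sequence $k_j$, so the claim fails even for sets of this special form; relatedly, the interval-approximation step offers no control on how many intervals are needed to approximate $E/\lambda_k$ within $1/N$, so ``optimizing $N$ in $\lambda_k$'' cannot be quantified. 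The repair is to abandon cancellation altogether, which is what the paper does: one proves the sublevel-set bound $|\{|x|<\lambda_k/2:\ \sin^2 S_k(x)\le\varepsilon\}|\le C\sqrt{\varepsilon}\,\lambda_k$ (on this range $|S_k'(x)|=\sqrt{\lambda_k-|x|}\gtrsim\lambda_k^{1/2}$, there are $O(\lambda_k^{3/2})$ bad phase windows, each of length $O(\sqrt{\varepsilon}\lambda_k^{-1/2})$), then chooses $\varepsilon_0$ with $C\sqrt{\varepsilon_0}\le\gamma/4$ so that a subset of $E\cap[-\lambda_k/2,\lambda_k/2]$ of measure $\ge\tfrac{\gamma}{4}\lambda_k$ carries the pointwise bound $\sin^2 S_k\ge\varepsilon_0$; combined with \eqref{equ2.8} this gives $\int_E|\varphi_k|^2\gtrsim\varepsilon_0\gamma$ for large $k$ --- a smaller constant than your hoped-for $(\gamma-2\eta)/2$, but entirely sufficient for \eqref{low-h}.
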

\begin{proof}
	Its proof is given in the appendix \ref{app-001}.
\end{proof}

\begin{remark}\label{rmk3.1}
	We have the following two conclusions: $(i)$   If $E^c\subset\mathbb{R}$ is $\alpha$-thin for some $\alpha>0$, then $E$ is thick, i.e., \eqref{equ0.4} holds. But the converse is not true. $(ii)$    If $E\subset\mathbb{R}$ is thick, then it is weakly thick. But the converse is not true. 
	
	In fact, the conclusion $(ii)$ has been proved in \cite[Section 4.3]{HWW}. We now prove the conclusion \noindent $(i)$. Suppose that
	$E^c$ is $\alpha$-thin. Then there exists  $L_1>0$ such that
	\begin{align}\label{equ4.34}
		\left|E^c\bigcap [x, x+1]\right| <C_1|x|^{-\alpha}, \quad\,\,\,\mbox{for all} \,\,|x|\ge L_1,
	\end{align}
	where $C_1>0$ is a constant independent of $x$. Now we choose $L_2\ge L_1$ so that $C_1\cdot L_2^{-\alpha}<\frac12$.
	Then it follows from \eqref{equ4.34}  that
	$|E\bigcap [x, x+1]|\ge \frac12$ for all $|x|\ge L_2$. Let $L=4L_2$. Then, a direct computation yields
	$|E\bigcap [x, x+L]|\geq \frac{L}{4}$ for all $x\in \mathbb{R}$. Thus, $E$ is thick. In contrast, we let $E=\bigcup_{n\in\mathbb{Z}}(n, n+\frac{1}{2})$. It is clear that $E$ is thick. However, for each  $n\in \mathbb{Z}$,  we have $|E^c\bigcap [n, n+1]|=|[n+\frac12, n+1]|=\frac12$. Thus, $E^c$ is not $\alpha$-thin for any $\alpha>0$.
\end{remark}

\begin{lemma}[Upper Bounds for Non-diagonal Terms]\label{lem-91}
	Assume that $E^c$ is $\alpha$-thin with $\alpha\geq \frac{1}{2} $. Then for each $n\in \N^+$,
	\begin{align}\label{equ-911-1}
		\left|\int_E \varphi_j(x) \varphi_k(x) \d x\right|\lesssim \Upsilon_0(\lambda_n,\alpha):=
		\left\{
		\begin{array}{ll}
			\lambda_n^{-\alpha}, \quad & \alpha\in[\frac{1}{2},1),\\
			\lambda_n^{-1}\log\lambda_n, \quad & \alpha=1,\\
			\lambda_n^{-1}, \quad &\alpha>1,
		\end{array}
		\right.
	\end{align}
	when $\lambda_j,\lambda_k\in [\lambda_n-\lambda_n^{\frac{1}{2}},\lambda_n+\lambda_n^{\frac{1}{2}}]$.
\end{lemma}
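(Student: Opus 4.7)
The plan is to exploit the $L^2(\R)$-orthogonality of distinct eigenfunctions of $H$. The lemma is meaningful only for $j\ne k$ (the diagonal case would contradict the stated bound, since $\int_E\varphi_k^2\ge D_1>0$ by Lemma \ref{lem-low-h}, and the diagonal part is separately inserted in Proposition \ref{lem-92} via that lemma). For $j\ne k$, orthogonality gives
\begin{equation*}
\Bigl|\int_{E}\varphi_j\varphi_k\,\d x\Bigr|=\Bigl|\int_{E^c}\varphi_j\varphi_k\,\d x\Bigr|\le \int_{E^c}|\varphi_j\varphi_k|\,\d x,
\end{equation*}
which converts an integral over the potentially bulky set $E$ into one over the thin set $E^c$. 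No cancellation in the product $\varphi_j\varphi_k$ is exploited afterwards.

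First I would pass from the Airy asymptotics \eqref{equ2.8}--\eqref{equ2.9} to uniform pointwise bounds on $|\varphi_j\varphi_k|$. Under the hypothesis $\lambda_j,\lambda_k\in[\lambda_n-\lambda_n^{1/2},\lambda_n+\lambda_n^{1/2}]$ we have $\lambda_j,\lambda_k\sim\lambda_n$, $|a_ja_k|\sim\lambda_n^{-1/2}$, and $\lambda_j-x,\lambda_k-x\sim\lambda_n-|x|$ away from an $O(\lambda_n^{1/2})$-neighborhood of $\pm\lambda_n$. This yields, for large $n$ and a suitable absolute constant $C>0$,
\begin{equation*}
|\varphi_j(x)\varphi_k(x)|\lesssim\begin{cases}\lambda_n^{-1/2}(\lambda_n-|x|)^{-1/2},& |x|<\lambda_n-C\lambda_n^{1/2},\\ \lambda_n^{-1/2},& \bigl||x|-\lambda_n\bigr|\le C\lambda_n^{1/2},\\ \lambda_n^{-1/2}e^{-c(|x|-\lambda_n)^{3/2}},& |x|>\lambda_n+C\lambda_n^{1/2}.\end{cases}
\end{equation*}

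Second, I split $E^c$ into these three regions and use the unit-interval form of $\alpha$-thinness, $|E^c\cap[x,x+1]|\lesssim\langle x\rangle^{-\alpha}$ for $|x|\ge L_0$. The forbidden region contributes at most $O(e^{-c\lambda_n^{3/4}})$, which is negligible. The turning-point strip has length $O(\lambda_n^{1/2})$, so $\alpha$-thinness gives its $E^c$-measure $\lesssim \lambda_n^{1/2-\alpha}$, producing a contribution $\lesssim\lambda_n^{-\alpha}$. For the oscillatory region, I insert the thin bound into $\lambda_n^{-1/2}\int_{E^c\cap\{|x|<\lambda_n-C\lambda_n^{1/2}\}}(\lambda_n-|x|)^{-1/2}\,\d x$ and split at $|x|=\lambda_n/2$: on $|x|\le\lambda_n/2$ the weight $(\lambda_n-|x|)^{-1/2}\sim\lambda_n^{-1/2}$ factors out, leaving $\sum_{|k|\le\lambda_n/2}\langle k\rangle^{-\alpha}$, which equals $O(\lambda_n^{1-\alpha})$, $O(\log\lambda_n)$, or $O(1)$ according as $\alpha<1$, $\alpha=1$, or $\alpha>1$; on $|x|\in[\lambda_n/2,\lambda_n-C\lambda_n^{1/2}]$ the thin weight $\lambda_n^{-\alpha}$ factors out, leaving $\int(\lambda_n-x)^{-1/2}\,\d x=O(\lambda_n^{1/2})$. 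Combining these with the overall $\lambda_n^{-1/2}$ amplitude prefactor produces exactly the three branches of $\Upsilon_0(\lambda_n,\alpha)$.

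The main technical point I expect to have to be careful about is the turning-point strip, because the Airy asymptotics in \eqref{equ2.8} change form there; the uniform $\lambda_n^{-1/2}$ amplitude bound must be matched to the adjacent oscillatory branch by choosing $C$ as a fixed multiple of the $\delta$ in \eqref{equ2.8}, uniformly in $j,k$. A minor additional subtlety is that $\alpha$-thin only asserts decay for $|x|\ge L_0$, so the bounded core $|x|\le L_0$ must be absorbed using the crude bound $|\varphi_j\varphi_k|\lesssim\lambda_n^{-1}$ that holds there, which contributes only $O(\lambda_n^{-1})$ and is compatible with $\Upsilon_0$ in every regime. The symmetric half $x<0$ is identical by the parity \eqref{equ2.7}.
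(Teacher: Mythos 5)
Your proposal is correct and follows essentially the same route as the paper: orthogonality to pass to $E^c$, parity to reduce to the half-line, the three-region Airy pointwise bounds, unit-interval $\alpha$-thinness, and the split of the oscillatory region at $|x|=\lambda_n/2$, yielding exactly the paper's estimates for $J_1$, $J_2$, $J_3$. Your extra remarks (restricting to $j\ne k$, matching constants at the turning point, and absorbing the bounded core $|x|\le L_0$) are sensible refinements of details the paper treats implicitly, not a different argument.
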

\begin{proof}
	Notice two facts: First, the orthogonality of $\varphi_k$ and $\varphi_j$ (with $k\neq j$) indicates
	$$
	\int_E\varphi_j\varphi_k\d x = -\int_{E^c}\varphi_j\varphi_k\d x.
	$$
	Second,  Lemma \ref{lem2.1} \noindent $(i)$ shows that $\varphi_j \varphi_k$ is either odd or even.
	Thus, to prove \eqref{equ-911-1}, it suffices to show  that for an arbitrarily fixed   $n\in\N^+$, and $\lambda_j,\lambda_k\in [\lambda_n-\lambda_n^{\frac{1}{2}},\lambda_n+\lambda_n^{\frac{1}{2}}]$,
	\begin{align}\label{equ-91-1}
		\left|\int_{E^c\cap[0,\infty)} \varphi_j(x) \varphi_k(x) \d x\right|\lesssim \Upsilon_0(\lambda_n,\alpha).
	\end{align}
	
	We now show \eqref{equ-91-1}. By  Lemma \ref{lem2.1} \noindent $(ii)$, we have
	\begin{align}\label{equ-91-2}
		|\varphi_j(x)|, |\varphi_k(x)|\lesssim\left\{\begin{array}{lc}
			\lambda_n^{-\frac{1}{4}}  \left(\lambda_n-x\right)^{-\frac{1}{4}}, \,\,\,\,\,\quad\quad\,\,\mbox{if}\,\,\, 0<x<\lambda_n-2\lambda_n^{\frac{1}{2}}, \\
			\lambda_n^{-\frac{1}{4}}, \,\,\qquad\qquad\quad\qquad\,\,\mbox{if}\,\,\, \lambda_n-2\lambda_n^{\frac{1}{2}} \leq x \leq \lambda_n+2\lambda_n^{\frac{1}{2}}, \\
			\lambda_n^{-\frac{1}{4}}   e^{-\frac{2}{3}(x-\lambda_n-\sqrt{\lambda_n})^{\frac{3}{2}}}, \qquad\,\,\mbox{if}\,\,\, x>\lambda_n+2\lambda_n^{\frac{1}{2}}.
		\end{array}\right.
	\end{align}
	Meanwhile, it is clear that
	\begin{align}\label{equ-91-3}
		\left|\int_{E^c\cap[0,\infty)} \varphi_j(x) \varphi_k(x) \d x\right| \leq J_1+J_2+J_3,
	\end{align}
	where
	\begin{align*}
		J_1:=\int_{E^c\cap[0,\lambda_n-2\lambda_n^{\frac{1}{2}})} |\varphi_j(x) \varphi_k(x)| \d x,\qquad J_2:=\int_{E^c\cap[\lambda_n-2\lambda_n^{\frac{1}{2}},\lambda_n+2\lambda_n^{\frac{1}{2}}]} |\varphi_j(x) \varphi_k(x)| \d x,
	\end{align*}
	and
	\begin{equation*}
		J_3:=\int_{E^c\cap(\lambda_n+2\lambda_n^{\frac{1}{2}},\infty)} |\varphi_j(x) \varphi_k(x)| \d x.
	\end{equation*}
	We first deal with $J_1$. Since $E^c$ is $\alpha$-thin, we have
	\begin{align}\label{equ-91-4}
		|E^c\cap[l-1,l]|\lesssim l^{-\alpha}, \;\;\mbox{when}\;\; l\in \N.
	\end{align}
	Then, it follows from \eqref{equ-91-2} and \eqref{equ-91-4} that
	\begin{align}\label{equ-91-5}
		J_1&\lesssim \int_{E^c\cap[0,\lambda_n-2\lambda_n^{\frac{1}{2}})} \lambda_n^{-\frac{1}{2}}(\lambda_n-x)^{-\frac{1}{2}}\d x
		\lesssim \sum_{1\leq l\leq [\lambda_n-2\lambda_n^{\frac{1}{2}}]+1}|E\cap[l-1,l]| \lambda_n^{-\frac{1}{2}}(\lambda_n-l)^{-\frac{1}{2}} \nonumber\\
		&\lesssim \lambda_n^{-\frac{1}{2}}\sum_{1\leq l\leq [\lambda_n-2\lambda_n^{\frac{1}{2}}]+1}l^{-\alpha}(\lambda_n-l)^{-\frac{1}{2}}
		\lesssim \Upsilon_0(\lambda_n,\alpha),
	\end{align}
	where in the last step we used the fact
	\begin{align*}
		&\sum_{1\leq l\leq [\lambda_n-2\lambda_n^{\frac{1}{2}}]+1}l^{-\alpha}(\lambda_n-l)^{-\frac{1}{2}} = \sum_{1\leq l\leq \frac{1}{2}\lambda_n}l^{-\alpha}(\lambda_n-l)^{-\frac{1}{2}}+\sum_{\frac{\lambda_n}{2} <l\leq [\lambda_n-2\lambda_n^{\frac{1}{2}}]+1}l^{-\alpha}(\lambda_n-l)^{-\frac{1}{2}}\\
		& \qquad \lesssim \lambda_n^{-\frac{1}{2}}\sum_{1\leq l\leq \frac{1}{2}\lambda_n}l^{-\alpha} +\lambda_n^{-\alpha}\sum_{\frac{\lambda_n}{2} <l\leq [\lambda_n-2\lambda_n^{\frac{1}{2}}]+1} (\lambda_n-l)^{-\frac{1}{2}}\\
		&\qquad\lesssim \left\{
		\begin{array}{ll}
			\lambda_n^{\frac{1}{2}-\alpha}, \quad & \alpha\in[\frac{1}{2},1),\\
			\lambda_n^{-\frac{1}{2}}\log\lambda_n, \quad & \alpha=1,\\
			\lambda_n^{-\frac{1}{2}}, \quad &\alpha>1.
		\end{array}
		\right.
	\end{align*}
	For $J_2$, we can use a similar way to the above to get
	\begin{align}\label{equ-91-6}
		J_2&\lesssim  \lambda_n^{-\frac{1}{2}}\sum_{ \lambda_n-2\lambda_n^{\frac{1}{2}}\leq l\leq \lambda_n+2\lambda_n^{\frac{1}{2}}} |E^c\cap[l-1,l]|\lesssim \lambda_n^{-\frac{1}{2}}\sum_{\lambda_n-2\lambda_n^{\frac{1}{2}}\leq l\leq \lambda_n+2\lambda_n^{\frac{1}{2}}}l^{-\alpha}\nonumber\\
		&\lesssim \lambda_n^{-\frac{1}{2}-\alpha} \sharp \left\{l\in\N: \lambda_n-2\lambda_n^{\frac{1}{2}}\leq l\leq \lambda_n+2\lambda_n^{\frac{1}{2}}\right\} \lesssim \lambda_n^{-\alpha}.
	\end{align}
	For $J_3$, we use \eqref{equ-91-2} to obtain
	\begin{align}\label{equ-91-7}
		J_3\lesssim  \int_{\lambda_n+2\lambda_n^{\frac{1}{2}}}^\infty \lambda_n^{-\frac{1}{2}}   e^{-\frac{4}{3}(x-\lambda_n-\sqrt{\lambda_n})^{\frac{3}{2}}}\d x\lesssim \lambda_n^{-\frac{1}{2}}e^{-\frac{2}{3}\lambda_n^{\frac{3}{4}}}\lesssim \lambda_n^{-\alpha}.
	\end{align}
	Finally, by \eqref{equ-91-3},  \eqref{equ-91-5}, \eqref{equ-91-6} and \eqref{equ-91-7}, and noting $\lambda_n^{-\alpha}\lesssim \Upsilon_0(\lambda_n,\alpha)$, we are led to \eqref{equ-91-1}. This completes the proof.
\end{proof}

Now we are in a position to prove Proposition \ref{lem-92}.

\noindent \emph{Proof of Proposition \ref{lem-92}.}
We arbitrarily fix $\varepsilon>0$ and write
\begin{align}\label{equ-92-2}
	\mbox{LHS of}\,\, \eqref{equ-92-0}&=\sum_{k\in J_{\alpha}^{\varepsilon}(\lambda_n)}|c_k|^2\int_E{|\varphi_k(x)|^2\,\mathrm dx}+\sum_{k\ne j}c_k\bar{c_j}\int_E{\varphi_k(x)\varphi_j(x)\,\mathrm dx}\nonumber\\
	&:= \uppercase\expandafter{\romannumeral1}+ \uppercase\expandafter{\romannumeral2}.
\end{align}
For the term $\uppercase\expandafter{\romannumeral1}$, we can use  Lemma  \ref{lem-low-h} and Remark \ref{rmk3.1} to see
\begin{align}\label{equ-92-1}
	\uppercase\expandafter{\romannumeral1} \geq D_1\sum_{k\in J_{\alpha}^{\varepsilon}(\lambda_n)}|c_k|^2,
\end{align}
where  $D_1>0$ is given by  Lemma  \ref{lem-low-h}.

To deal with the term $ \uppercase\expandafter{\romannumeral2}$, we first claim the following cardinality bound:
\begin{align}\label{equ-92-3.1}
	\sharp  J_{\alpha}^{\varepsilon}(\lambda_n) \leq
	\left\{
	\begin{array}{ll}
		D_2\varepsilon \lambda_n^{\alpha}, \quad & \alpha\in[\frac{1}{2},1),\\
		D_2\varepsilon \lambda_n(\log\lambda_n)^{-1}, \quad & \alpha=1,\\
		D_2\varepsilon \lambda_n, \quad & \alpha>1,
	\end{array}
	\right.
\end{align}
where $D_2$ is a positive constant, independent of $n$, $\alpha$ and $\varepsilon$.
In fact, when $\alpha\in[\frac{1}{2},1)$, the result in \eqref{equ-92-3.1} is obtained by two facts: first, 
$|\{\lambda\in \R: |\lambda-\lambda_n|\leq  \varepsilon \lambda_n^{\alpha-\frac{1}{2}}\}|=
2 \varepsilon \lambda_n^{\alpha-\frac{1}{2}}$; second, $\lambda_{k+1}-\lambda_k\sim \lambda_n^{-\frac{1}{2}}$,  when $k\in J_{\alpha}^{\varepsilon}(\lambda_n)$. The proof of \eqref{equ-92-3.1} for the case that 
$\alpha\geq 1$ can be done similarly.

Next,   since $E^c$ is $\alpha$-thin, we can apply Lemma \ref{lem-91} and   \eqref{equ-92-3.1}
to find
\begin{align}\label{equ-92-3}
	\uppercase\expandafter{\romannumeral2}
	&\leq \left(\sum_{k\neq j, k,j\in J_{\alpha}^{\varepsilon}(\lambda_n)}|c_jc_k|^2 \right)^{\frac{1}{2}}\left( \sum_{k\neq j, k,j\in J_{\alpha}^{\varepsilon}(\lambda_n)} \left|\int_{E}\varphi_j\varphi_k\d x\right|^2 \right)^{\frac{1}{2}}\nonumber\\
	&\leq \sum_{k\in  J_{\alpha}^{\varepsilon}(\lambda_n)}|c_k|^2\cdot \left( \sum_{k\neq j, k,j\in J_{\alpha}^{\varepsilon}(\lambda_n)} (D_3\Upsilon_0(\lambda_n,\alpha))^2 \right)^{\frac{1}{2}}\nonumber\\
	&\leq  \varepsilon D_2D_3\sum_{k\in  J_{\alpha}^{\varepsilon}(\lambda_n)}|c_k|^2.
\end{align}
where $D_3$ is given by Lemma \ref{lem-91}.

Finally, by plugging \eqref{equ-92-1} and \eqref{equ-92-3} into \eqref{equ-92-2}, and taking $\varepsilon>0$  such that $\varepsilon D_2D_3\leq \frac{D_1}{2}$, we obtain \eqref{equ-92-0}.
This completes the proof of Proposition \ref{lem-92}. \qed

\subsection{Relaxed observability inequality }\label{sec3.2}

For each $k\in \R$, we define the following Sobolev norms associated to $H$:
$$
\|u\|_{\mathcal {H}^k}=\left( \sum_{n=1}^\infty (1+\lambda_n)^{2k}|(u,\varphi_n)|^2 \right)^{\frac{1}{2}},
$$
where $(\cdot,\cdot)$ denotes the inner product in $L^2(\R)$.
The main result of this subsection is the following relaxed observability inequality:

\begin{proposition}\label{lem-ob-comp}
	Let $H$ be given in \eqref{equ0.6}. Suppose that $E^c$ is $\alpha$-thin with $\alpha> \frac{1}{2}$. Then for each $T\in(0,\frac{1}{2})$,
	\begin{align}\label{equ3.23}
		\|u_0\|^2_{L^2(\R)}\lesssim T^{-1} \int_0^T\|e^{-\i t H}u_0\|^2_{L^2(E)}\d t+\Upsilon_1(T,\alpha)\|u_0\|^2_{\mathcal {H}^{-1}}\;\;\mbox{for any}\;\;u_0\in L^2(\R),
	\end{align}
	where
	$$
	\Upsilon_1(T,\alpha)=
	\left\{
	\begin{array}{ll}
		T^{-(1+\frac{6}{2\alpha-1})}, \quad & \alpha\in(\frac{1}{2},1),\\
		T^{-7}(\log \frac{1}{T})^6, \quad &\alpha=1,\\
		T^{-7}, \quad &\alpha>1.
	\end{array}
	\right.
	$$
\end{proposition}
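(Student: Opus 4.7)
The plan is to deduce Proposition \ref{lem-ob-comp} from Proposition \ref{lem-92} in two stages, following the Burq-Zworski / Miller paradigm of passing through a uniform resolvent estimate. Stage (i) upgrades the cluster-type spectral inequality into a high-frequency resolvent estimate for $H$; stage (ii) applies a Fourier-in-time argument to convert the resolvent estimate into the desired relaxed observability on $(0,T)$, with explicit dependence on $T$.

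\emph{Stage (i): spectral inequality $\Rightarrow$ resolvent estimate.} I would first prove that there exist $\mu_0, C>0$ such that for all $\mu \geq \mu_0$ and $v \in D(H)$,
\begin{equation*}
\|v\|^2_{L^2(\R)} \;\leq\; C\,\|v\|^2_{L^2(E)} \;+\; \frac{C}{\rho(\mu)^2}\,\|(H-\mu)v\|^2_{L^2(\R)},
\end{equation*}
where $\rho(\mu)$ is the cluster width appearing in Proposition \ref{lem-92}, namely $\mu^{\alpha-1/2}$ for $\alpha \in (1/2,1)$, $\mu^{1/2}/\log\mu$ for $\alpha=1$, and $\mu^{1/2}$ for $\alpha>1$. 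Given $\mu$, let $\lambda_n$ be the eigenvalue closest to $\mu$, set $J = J_\alpha^\varepsilon(\lambda_n)$, and split $v = P_J v + P_J^\perp v$ spectrally. Proposition \ref{lem-92} directly yields $\|P_J v\|^2 \leq \delta^{-1}\|P_J v\|^2_{L^2(E)}$, while every mode with $\lambda_k \notin J$ satisfies $|\lambda_k - \mu| \gtrsim \rho(\mu)$, so orthogonality gives $\|P_J^\perp v\|^2 \leq \rho(\mu)^{-2}\|(H-\mu)v\|^2$. Combining these via $\|P_J v\|^2_{L^2(E)} \leq 2\|v\|^2_{L^2(E)} + 2\|P_J^\perp v\|^2$ yields the claim.

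\emph{Stage (ii): time-Fourier transfer.} Pick $\chi \in C_c^\infty((0,T))$ with $\|\chi\|_{L^2}^2 \sim T$, $\|\chi'\|_{L^2}^2 \sim T^{-1}$, and $\chi \equiv 1$ on a sub-interval of length comparable to $T$. Setting $\phi(t,x) = \chi(t) e^{-itH}u_0$, one has $(i\partial_t - H)\phi = i\chi'(t) e^{-itH}u_0$, so its Fourier transform in $t$ satisfies $(\tau - H)\widehat{\phi}(\tau) = i\,\widehat{\chi' u}(\tau)$. Applying the resolvent estimate of stage (i) at each $\tau \geq \mu_0$ and integrating in $\tau$, Parseval's identity converts the left-hand side into $2\pi\|\chi\|_{L^2}^2\|u_0\|^2$, the observation integral into $2\pi\int_0^T |\chi|^2\|u\|^2_{L^2(E)}\,dt$, and the source term into $2\pi\rho(\mu_0)^{-2}\|\chi'\|_{L^2}^2\|u_0\|^2$. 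The complementary slice $|\tau| < \mu_0$ is essentially supported on eigenmodes with $\lambda_n \lesssim \mu_0$ (the Schwartz tails of $\widehat{\chi}$ being negligible), and is thus controlled by a multiple of $T\mu_0^2\|u_0\|^2_{\mathcal{H}^{-1}}$. After dividing by $\|\chi\|_{L^2}^2 \sim T$, the $\rho(\mu_0)^{-2}$ term can be absorbed once $\rho(\mu_0) \gtrsim T^{-1}$; inverting $\rho$ in each of the three regimes gives the critical threshold $\mu_0(T)$ and hence the prefactor $\Upsilon_1(T,\alpha)$.

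The main obstacle is the delicate low-frequency bookkeeping in stage (ii): $\widehat{\phi}(\tau)$ is not exactly spectrally localized to $\{\lambda_n \leq C|\tau|\}$, and extracting the sharp powers of $T$ in $\Upsilon_1(T,\alpha)$ requires carefully balancing the rapid decay of $\widehat{\chi}$ against the $\mathcal{H}^{-1}$ weights. A crude argument would yield a qualitatively similar inequality but with a worse $T \to 0$ rate; the precise exponents displayed in the three regimes are produced by tracking constants through the optimization of $\mu_0(T)$ and by choosing $\chi$ to minimize simultaneously $\|\chi'\|_{L^2}^2$ and the effective support of $\widehat{\chi}$ on the low-frequency side.
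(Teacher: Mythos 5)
Your proposal is correct and follows the same overall strategy as the paper: the paper likewise first converts Proposition \ref{lem-92} into a uniform resolvent estimate (its Lemma \ref{lem-92-resolve}, proved exactly by your cluster/complement splitting, plus the trivial bound from $H\geq I$ for non-positive spectral parameters, a region you should also mention since the time-Fourier variable ranges over all of $\R$), and then runs the Burq--Zworski cut-off-and-Plancherel argument with a threshold $N=\mu_0(T)$ fixed by absorbing the $\rho(\mu_0)^{-2}\|\chi_T'\|_{L^2}^2\|u_0\|^2$ term, which gives the same $\mu_0\sim T^{-2/(2\alpha-1)}$ (resp. $T^{-2}(\log\frac1T)^2$, $T^{-2}$) as the paper's \eqref{equ-911-4}. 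The one genuine difference is the low-frequency bookkeeping: the paper keeps the low-frequency part of the \emph{source} $\widehat{v}=\widehat{i\chi_T'e^{-\i tH}u_0}$ and bounds $\int_{|\tau|\leq N}\|\widehat v\|^2\d\tau\lesssim N^3\|u_0\|_{\mathcal H^{-1}}^2$, producing $\Upsilon_1\sim T^{-1}N^3$; you instead bound the low-frequency slice of the \emph{solution} $\widehat\phi$ via the decay of $\widehat{\chi_T}$, and your bound $T\mu_0^2\|u_0\|_{\mathcal H^{-1}}^2$ is in fact correct (the tail modes $\lambda_n>2\mu_0$ contribute at most $\mu_0\|u_0\|_{\mathcal H^{-1}}^2\leq T\mu_0^2\|u_0\|_{\mathcal H^{-1}}^2$ since $\mu_0\gg T^{-1}$). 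Carried through, your route therefore yields the prefactor $\mu_0^2$, i.e. $T^{-4/(2\alpha-1)}$, $T^{-4}(\log\frac1T)^4$, $T^{-4}$ in the three regimes, which is \emph{smaller} than the stated $\Upsilon_1(T,\alpha)$ for small $T$; so your claim that the optimization "gives the prefactor $\Upsilon_1$" is not literally what your computation produces, but since it produces something stronger, the proposition follows a fortiori and the "delicate bookkeeping" you worry about is not actually an obstruction.
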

To prove it, we need the following resolvent estimate, which relies on the spectral inequality established in Proposition \ref{lem-92}.
\begin{lemma}\label{lem-92-resolve}
	Let $H$ be given in \eqref{equ0.6}. Suppose that $E^c$ is $\alpha$-thin with $\alpha\geq \frac{1}{2}$. Then for each $\lambda\in \R$,
	\begin{align}\label{equ-92-4}
		\|u\|_{L^2(\R)}\lesssim \Upsilon_2(\lambda,\alpha)\|(H-\lambda)u\|_{L^2(\R)}+\|u\|_{L^2(E)}\;\;\mbox{for all}\;\;u\in D(H),
	\end{align}
	where
	$$
	\Upsilon_2(\lambda,\alpha)=
	\left\{
	\begin{array}{ll}
		(1+|\lambda|)^{-(\alpha-\frac{1}{2})}, \quad & \alpha\in[\frac{1}{2},1),\\
		(1+|\lambda|)^{-\frac{1}{2}}\log(e+|\lambda|), \quad & \alpha=1,\\
		(1+|\lambda|)^{- \frac{1}{2}}, \quad & \alpha>1.\\
	\end{array}
	\right.
	$$
\end{lemma}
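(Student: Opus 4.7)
My plan is to decompose $u$ along the eigenbasis of $H$, split off the eigencomponents whose eigenvalues lie in a small window around $\lambda$ (so that the spectral inequality of Proposition \ref{lem-92} applies directly), and control the remaining ``far'' part through functional calculus on the spectral side. The size of the window will be exactly the one appearing in the definition of $J_\alpha^\varepsilon(\lambda_n)$, and the resulting constant multiplying $\|(H-\lambda)u\|_{L^2}$ will match $\Upsilon_2(\lambda,\alpha)$ piece by piece.

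Concretely, write $u=\sum_{k\ge 1}c_k\varphi_k$ with $c_k=(u,\varphi_k)$. Let $\lambda_n$ be an eigenvalue closest to $\lambda$ (taking $n=1$ if $\lambda\le \lambda_1$), and denote by $r=r(\lambda_n,\alpha)$ the half-width in the definition of $J_\alpha^\varepsilon(\lambda_n)$, i.e.\ $r=\varepsilon\lambda_n^{\alpha-1/2}$ for $\alpha\in[1/2,1)$, $r=\varepsilon\lambda_n^{1/2}/\log\lambda_n$ for $\alpha=1$, and $r=\varepsilon\lambda_n^{1/2}$ for $\alpha>1$. Set $J=J_\alpha^\varepsilon(\lambda_n)$, $u_J=\sum_{k\in J}c_k\varphi_k$, and $u_{J^c}=u-u_J$. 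I then distinguish two regimes according to whether $\lambda$ is ``close to'' or ``far from'' the spectrum on the scale $r$.

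In the first regime $|\lambda-\lambda_n|\ge r/2$, since $\lambda_n$ is a closest eigenvalue, $|\lambda-\lambda_k|\ge r/2$ for every $k$, and Parseval yields $\|(H-\lambda)u\|_{L^2}^2\ge (r/2)^2\|u\|_{L^2}^2$, which already gives the desired bound (with the observation term simply discarded). This regime in particular absorbs all $\lambda<0$ of moderate or large modulus, where one in fact gains a factor $|\lambda|^{-1}$. In the complementary regime $|\lambda-\lambda_n|<r/2$, for every $k\notin J$ we have $|\lambda_k-\lambda_n|\ge r$, hence $|\lambda_k-\lambda|\ge r/2$, so Parseval applied to $u_{J^c}$ together with orthogonality of $(H-\lambda)u_J$ and $(H-\lambda)u_{J^c}$ yields $\|u_{J^c}\|_{L^2}\le 2r^{-1}\|(H-\lambda)u\|_{L^2}$. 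For the near part I apply Proposition \ref{lem-92} to $u_J$: since $\sum_{k\in J}|c_k|^2=\|u_J\|_{L^2(\R)}^2$, the proposition gives $\|u_J\|_{L^2(\R)}\lesssim \|u_J\|_{L^2(E)}$, and the triangle inequality on $E$ produces $\|u_J\|_{L^2(E)}\le \|u\|_{L^2(E)}+\|u_{J^c}\|_{L^2(E)}\le \|u\|_{L^2(E)}+\|u_{J^c}\|_{L^2(\R)}$. Summing $\|u\|_{L^2}\le \|u_J\|_{L^2}+\|u_{J^c}\|_{L^2}$ then yields $\|u\|_{L^2}\lesssim \|u\|_{L^2(E)}+r^{-1}\|(H-\lambda)u\|_{L^2}$.

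The final task is to match $r^{-1}$ with $\Upsilon_2(\lambda,\alpha)$. Since in the nontrivial regime $|\lambda-\lambda_n|\le r/2\ll \lambda_n$ one has $\lambda_n\sim 1+|\lambda|$, an immediate case check on the three pieces of $r$ shows $r^{-1}\asymp \Upsilon_2(\lambda,\alpha)$, completing the proof. The only substantive input is Proposition \ref{lem-92}; the remainder is spectral bookkeeping, and I expect the only minor pitfall to be the handling of $\lambda$ near or below the bottom of the spectrum, which the two-regime split above already takes care of.
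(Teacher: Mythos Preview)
Your proposal is correct and follows essentially the same route as the paper: decompose $u$ into a ``near'' part (eigenmodes with $|\lambda_k-\lambda|$ small) controlled by the Ingham-type spectral inequality of Proposition~\ref{lem-92}, and a ``far'' part controlled by the spectral gap via functional calculus, with $r^{-1}\asymp\Upsilon_2(\lambda,\alpha)$ furnishing the correct resolvent factor. The only organizational difference is that the paper splits into three explicit ranges of $\lambda$ (negative, bounded positive, large positive) rather than your two-regime dichotomy on $|\lambda-\lambda_n|$ versus $r/2$; your packaging is arguably cleaner since it centers the window at an eigenvalue $\lambda_n$ and so invokes Proposition~\ref{lem-92} verbatim, whereas the paper centers at $\lambda$ and tacitly identifies its window with some $J_\alpha^{\varepsilon}(\lambda_n)$.
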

\begin{proof}
	First, since $H\geq I$, we have 
	$$
	\|(H-\lambda I)u\|_{L^2(\R)}\geq (1+|\lambda|)\|u\|_{L^2(\R)}\\,\;\;\mbox{when}\;\;\lambda\leq 0,
	$$
	which leads to \eqref{equ-92-4} with $\lambda\leq 0$.
	
	Next, we consider the case $\lambda \gg 1$. For brevity, we only 
	prove  \eqref{equ-92-4} with 
	$\alpha\in[\frac{1}{2},1)$. The proof of \eqref{equ-92-4} with $\alpha\geq 1$ can be done similarly. 
	
	To this end, we arbitrarily fix $\lambda \gg 1$ and $\alpha\in[\frac{1}{2},1)$. Since $E^c$ is $\alpha$-thin, we can use Proposition \ref{lem-92} to find $\varepsilon>0$ and $\delta>0$ such that \eqref{equ-92-0} holds.  
	Let $Q_\lambda$ be the projection onto the eigenspace spanned by the eigenfunctions $\varphi_k$ with
	$|\lambda_k-\lambda|<\varepsilon \lambda^{\alpha-\frac{1}{2}}$. Then we have
	\begin{align}\label{equ-92-9}
		\|(H-\lambda I)(I-Q_{\lambda})u\|_{L^2(\R)}\geq  \varepsilon \lambda^{\alpha-\frac{1}{2}}\|(I-Q_{\lambda})u\|_{L^2(\R)}.
	\end{align}
	Meanwhile, by \eqref{equ-92-0}, we see
	\begin{align}\label{equ-92-10}
		\| Q_{\lambda} u\|_{L^2(E)}\geq  \delta \| Q_{\lambda} u\|_{L^2(\R)}.
	\end{align}
	By \eqref{equ-92-9} and \eqref{equ-92-10}, we find 
	\begin{align*}
		\|u\|_{L^2(\R)}& \leq \|(1-Q_\lambda) u\|_{L^2(\R)}+\|Q_\lambda u\|_{L^2(\R)}
		\lesssim \|(1-Q_\lambda) u\|_{L^2(\R)}+\|Q_\lambda u\|_{L^2(E)}\\
		&\leq 2\|(1-Q_\lambda) u\|_{L^2(\R)}+\|u\|_{L^2(E)}\\
		&\lesssim \lambda^{-(\alpha-\frac{1}{2})}\|(H-\lambda)(1-Q_{\lambda})u\|_{L^2(\R)} +\|u\|_{L^2(E)}\\
		&\leq \lambda^{-(\alpha-\frac{1}{2})}\|(H-\lambda)u\|_{L^2(\R)}+\|   u\|_{L^2(E)}.
	\end{align*}
	In the last inequality above, we used the fact that $\|(1-Q_{\lambda})(H-\lambda)u\|_{L^2(\R)}\le \|(H-\lambda)u\|_{L^2(\R)}$.
	Thus, we have proved \eqref{equ-92-4} for the case $\lambda \gg 1$.

	Finally, we turn to the case where $0<\lambda \lesssim 1$. Then  \eqref{equ-92-4} reduces to
	\begin{align}\label{equ-92-5}
		\|u\|_{L^2(\R)}\lesssim  \|(H-\lambda)u\|_{L^2(\R)}+\|u\|_{L^2(E)}.
	\end{align}
	We assume that $\lambda\leq \Lambda_0$ for some $\Lambda_0>0$. 
	We need the following two observations:
	
	\noindent {\it $(i)$} Since all eigenvalues $\lambda_n$ are simple, there exists a positive constant $\delta$ such that
	$$
	\delta_0:=\inf_{\lambda_j,\lambda_k\leq \Lambda_0}|\lambda_j-\lambda_k|>0.
	$$ 
	{\it ($ii$)} By combining Lemma \ref{lem-low-h}, Remark \ref{rmk3.1} and the assumption that $E^c$ is $\alpha$-thin, we deduce that
	$$
	\|\varphi_k\|_{L^2(\R)}\lesssim \|\varphi_k\|_{L^2(E)}.
	$$

	Based on these two facts, \eqref{equ-92-5} can be proved in a manner analogous to the case $\lambda \gg 1$, provided that we define $Q_\lambda$ as the projection onto the eigenspace spanned by the eigenfunctions $\varphi_k$ with
	$|\lambda_k-\lambda|<\delta_0$.

	
	Hence, we complete the proof.
\end{proof}

Now we are in a position to prove Proposition \ref{lem-ob-comp}.

\noindent \emph{Proof of Proposition \ref{lem-ob-comp}:}
We arbitrarily fix $T\in (0,1/2)$. Let $t\mapsto \chi(t)\in\R$ ($t\in\R$)  be a smooth cut-off function supported over $[0,1]$. 
Let $\chi_T(t):=\chi(t/T)$ ($t\in\R$) and $w(t,x):=\chi_T(t)e^{-\i t H}u_0(x)$ ($(t,x)\in\R\times\R$). Then $w$ satisfies
\begin{align}\label{3.21-12-21-w}
	(i\partial_t-H)w(t,x)=i\chi'_Te^{-\i t H}u_0(x):=v(t,x),\;\;t\in\R, x\in\R.
\end{align}

First, taking the Fourier transform with respect to the variable $t$ in \eqref{3.21-12-21-w}   gives
$$
(-H-\tau)\widehat{w}(\tau,x)=\widehat{v}(\tau,x),\;\;\tau\in\R, x\in\R.
$$
This, along with  Lemma \ref{lem-92-resolve}, leads to
\begin{align}\label{3.22-12-21-w}
	\|\widehat{w}(\tau,\cdot)\|_{L^2(\R_x)}\leq C\Upsilon_2(\tau,\alpha)\|\widehat{v}(\tau,\cdot)\|_{L^2(\R_x)}+C\|\widehat{w}(\tau,\cdot)\|_{L^2(E)},\;\tau\in\R,
\end{align}
where
\begin{align}\label{equ-911-2}
	\Upsilon_2(\tau,\alpha)=
	\left\{
	\begin{array}{ll}
		(1+|\tau|)^{-(\alpha-\frac{1}{2})}, \quad & \alpha\in[\frac{1}{2},1),\\
		(1+|\tau|)^{-\frac{1}{2}}\log(e+|\tau|), \quad & \alpha=1,\\
		(1+|\tau|)^{- \frac{1}{2}}, \quad & \alpha>1.\\
	\end{array}
	\right.
\end{align}
(Here and in what follows, $C$ stands for a positive constant (independent of $t$, $\tau$, $x$ and $\alpha$) which varies in different contents.)  
Taking the $L^2$ norm with respect to the $\tau$ variable  in \eqref{3.22-12-21-w} leads to
\begin{multline}\label{equ-911-2.5}
	\|\widehat{w}\|_{L^2(\R_x\times\R_\tau)} \\
	\leq C\Upsilon_2(N,\alpha)\|\widehat{v}\|_{L^2(\R_x\times\R_\tau)}+C\|\widehat{w}\|_{L^2(E\times \R_\tau)}
	+\left(\int_{|\tau|\leq N}\|\widehat{v}(\tau,\cdot)\|^2_{L^2(\R_x)}\d \tau \right)^{\frac{1}{2}}.
\end{multline}

Second, by the Plancherel theorem, we have
\begin{align}
	\|\widehat{w}\|_{L^2(\R_x\times\R_\tau)}=\|u_0\|_{L^2(\R_x)}\|\chi_T\|_{L^2(\R_t)}\sim T^{\frac{1}{2}}\|u_0\|_{L^2(\R_x)},\label{equ-911-2.6}\\
	\|\widehat{v}(\tau)\|_{L^2(\R_x\times\R_\tau)}=\|u_0\|_{L^2(\R_x)}\|\chi'_T\|_{L^2(\R_t)}\sim T^{-\frac{1}{2}}\|u_0\|_{L^2(\R_x)},\label{equ-911-2.7}
\end{align}
and
\begin{align}\label{equ-911-2.8}
	\|\widehat{w}(\tau)\|_{L^2(E\times \R_\tau)}=\|\chi_Te^{-\i t H}u_0\|_{L^2(E\times \R_t)}\leq \left(\int_0^T\|e^{-\i t H}u_0\|^2_{L^2(E)}\d t \right)^{\frac{1}{2}}.
\end{align}

Third, since $\alpha>\frac12$, we can  choose $N$ large enough such that
\begin{align}\label{equ-911-3}
	C\Upsilon_2(N,\alpha)\|\widehat{v}\|_{L^2(\R_x\times\R_\tau)}\leq \frac{1}{2}\|\widehat{w}\|_{L^2(\R_x\times\R_\tau)}.
\end{align}
Indeed, by \eqref{equ-911-2}, \eqref{equ-911-2.6} and \eqref{equ-911-2.7}, we find that \eqref{equ-911-3} holds, provided that
\begin{align}\label{equ-911-4}
	N\sim
	\left\{
	\begin{array}{ll}
		T^{-(\alpha-\frac{1}{2})^{-1}}, \quad & \alpha\in(\frac{1}{2},1),\\
		T^{-2}(\log\frac{1}{T})^2, \quad & \alpha=1,\\
		T^{-2}, \quad &\alpha>1.
	\end{array}
	\right.
\end{align}

Now, it follows  from \eqref{equ-911-2.5},  \eqref{equ-911-2.8} and \eqref{equ-911-3} that
\begin{align}\label{equ-93-1}
	\|u_0\|_{L^2(\R_x)}\leq C'T^{-\frac{1}{2}}\left(\int_0^T\|e^{-\i t H}u_0\|^2_{L^2(E)}\d t \right)^{\frac{1}{2}}+C'T^{-\frac{1}{2}}\left(\int_{|\tau|\leq N}\|\widehat{v}(\tau,\cdot)\|^2_{L^2(\R_x)}\d \tau \right)^{\frac{1}{2}}.
\end{align}

Next, we deal with the last term in \eqref{equ-93-1}.
For each $n\in\N^+$, we let $u_n=(u_0,\varphi_n)_{L^2(\R_x)}$. Then we have
$$
e^{-\i t H}u_0(x)=\sum_n e^{-\i t\lambda_n}u_n\varphi_n(x),\;t\in\R, x\in\R.
$$
With  \eqref{3.21-12-21-w}, the above yields
$$
\widehat{v}(\tau,x)=\i\sum_n \widehat{\chi'_T}(\tau-\lambda_n)u_n\varphi_n(x),\;\;\tau\in\R, \tau\in\R.
$$
Since $|\widehat{\chi}(\xi)|\lesssim (1+|\xi|)^{-2}$ for each $\xi\in R$, 
we have
$$
|\widehat{\chi'_T}(\xi)|\lesssim |T\xi|(1+T|\xi|)^{-2}\lesssim (1+T|\xi|)^{-1},\;\;\xi\in\R.
$$
This yields  that when  $|\tau|\leq N$,
\begin{align*}
	|\widehat{\chi'_T}(\tau-\lambda_n)|\lesssim  (1+T|\tau-\lambda_n|)^{-1}\lesssim
	\left\{
	\begin{array}{ll}
		T^{-1}(1+\lambda_n)^{-1}, \quad &\lambda_n>2N,\\
		N(1+\lambda_n)^{-1}, \quad &\lambda_n\leq 2N.
	\end{array}
	\right.
\end{align*}
Hence, we have
\begin{align*}
	\left(\int_{|\tau|\leq N}\|\widehat{v}(\tau,\cdot)\|^2_{L^2(\R_x)}\d \tau \right)^{\frac{1}{2}}
	&\lesssim \left(\sum_{n=1}^\infty|u_n|^2\int_{|\tau|\leq N}|\widehat{\chi'_T}(\tau-\lambda_n)|^2\d \tau \right)^{\frac{1}{2}}\\
	&\lesssim \left(\sum_{n=1}^\infty|u_n|^2(1+\lambda_n)^{-2} N(T^{-2}+ N^2) \right)^{\frac{1}{2}}\\
	&\lesssim N^{\frac{1}{2}}(T^{-1}+N)\|u_0\|_{\mathcal {H}^{-1}}\\
	&\lesssim N^{\frac{3}{2}}\|u_0\|_{\mathcal {H}^{-1}},
\end{align*}
where in the last step we used $T^{-1}\lesssim N$, which follows from \eqref{equ-911-4}.  Plugging this into \eqref{equ-93-1} and making use of \eqref{equ-911-4} again, we obtain \eqref{equ3.23}. This completes the proof of Proposition \ref{lem-ob-comp}.
\qed
\begin{remark}\label{rmk3.2}
	In the proof of Proposition \ref{lem-ob-comp}, the following fact is crucial to derive the arbitrariness of $T$ in \eqref{equ3.23}:  $\Upsilon_2(\tau,\alpha)\rightarrow 0$, as $\tau\rightarrow\infty$. The latter requires $\alpha>\frac12$. On the other hand, when $\alpha=\frac12$, it follows from Proposition \ref{lem-92} and Proposition \ref{prop3.1} that $E$ is an observable set at some time  for \eqref{equ1.1}, which is weaker than Theorem \ref{thm1.1}, see Remark \ref{rmk4.1}.
\end{remark}

\subsection{Proof of Theorem \ref{thm1.1}}\label{sec3.3}
In this  subsection, we will prove Theorem \ref{thm1.1} by the relaxed observability inequality \eqref{equ3.23} and the quantitative unique  compactness strategy developed in \cite{Bour13}. This approach enables us not only to effectively glue the high-frequency and low-frequency estimates, but also to obtain an explicit expression for the observable constant $C_{obs}(T,E)$. 

\begin{proof}[Proof of Theorem \ref{thm1.1}]
	We begin by noting that  it suffices to prove the observability inequality \eqref{equ0.2} for any sufficiently small $T>0$. 
	We arbitrarily fix a small  $T\in(0,\frac{1}{2})$ and $u_0\in L^2(\R)$.  We simply write $(\varphi,\psi) $
	for   $(\varphi,\psi)_{L^2(\R}$,
	with $\varphi, \phi\in L^2(\R)$.
	For each $K>0$, we define the spectral projection 
	$$
	\Pi_K \varphi = \sum_{\lambda_k\leq K}(\varphi,\varphi_k)\varphi_k,\;\;\varphi\in L^2(\R).
	$$
	By  Proposition \ref{lem-ob-comp}, we see that for each  $t\in [\frac{T}{4},T]$ and each $K>0$,
	$$
	\|(I-\Pi_K)u_0\|^2_{L^2(\R)}\leq C T^{-1} \int_0^t\|e^{-\i sH}(I-\Pi_K)u_0\|^2_{L^2(E)}\d s+CK^{-2}\Upsilon_1(T,\alpha)\|(I-\Pi_K)u_0\|^2_{L^2},
	$$
	where $\Upsilon_1(T,\alpha)$ is given by  Proposition \ref{lem-ob-comp}.
	So, when $K\geq  (2C\Upsilon_1(T,\alpha))^{\frac{1}{2}}$, 
	\begin{align}\label{equ-93-2}
		\|(I-\Pi_K)u_0\|^2_{L^2(\R)}\leq 2C T^{-1} \int_0^t\|e^{-\i sH}(I-\Pi_K)u_0\|^2_{L^2(E)}\d s, \quad t\in[\frac{T}{4},T].
	\end{align}
	Let
	\begin{align}\label{equ-93-2.5}
		N:=(2C\Upsilon_1(T,\alpha))^{\frac{1}{2}}\sim
		\left\{
		\begin{array}{ll}
			T^{-(\frac{1}{2}+\frac{3}{2\alpha-1})}, \quad & \alpha\in(\frac{1}{2},1),\\
			T^{-\frac{7}{2}}(\log \frac{1}{T})^3, \quad &\alpha=1,\\
			T^{-\frac{7}{2}}, \quad &\alpha>1.
		\end{array}
		\right.
	\end{align}
	Then by \eqref{equ-93-2} (where $t=T/4$ and $u_0$ is replaced by $e^{-\i\frac{T}{2}H}u_0$), we obtain that when $K\geq N$,
	\begin{align}\label{equ-93-3}
		\|(I-\Pi_K)u_0\|^2_{L^2(\R)}\leq 2C T^{-1} \int_{\frac{T}{2}}^{\frac{3T}{4}}\|e^{-\i t H}(I-\Pi_K)u_0\|^2_{L^2(E)}\d t.
	\end{align}
	
	To prove the theorem, we  decompose the initial state $u_0$ into two components
	$$
	u_0=(\Pi_M-\Pi_N)u_0 +(I-\Pi_M+\Pi_N)u_0,
	$$
	where $M$ is chosen to be sufficiently large such that $M>2N$. We  proceed to estimate each term on the right hand side separately and then glue them together. The remainder of the proof is organized into  several steps.

	\noindent{\it Step 1. Estimate  of $\Pi_Nu_0$.}
	
	Since 
	$$
	e^{-\i t H}\Pi_Nu_0=\sum_{\lambda_k\leq N} e^{-\i t\lambda_k}u_k\varphi_k,\;\;\mbox{with}\;\;u_k=(u_0,\varphi_k),
	$$
	we can use Fubini's theorem to get
	\begin{align}\label{equ-93-4}
		\int_{\frac{T}{2}}^{\frac{3T}{4}}\|e^{-\i t H}\Pi_Nu_0\|^2_{L^2(E)}\d t=\int_E\int_{\frac{T}{2}}^{\frac{3T}{4}}\Big|\sum_{\lambda_k\leq N} e^{-\i t\lambda_k}u_k\varphi_k\Big|^2\d t\d x.
	\end{align}
	Next we give a lower bound for the term on the right hand of \eqref{equ-93-4}. To this end,
	we use  \eqref{equ2.4} to obtain the following uniform gap for $\{\lambda_k\}_{\lambda_k\leq N}$:
	\begin{align}\label{equ-93-5}
		\Delta \sim N^{-\frac{1}{2}}.
	\end{align}
	Thus, we can apply Lemma \ref{lem-Salem}, i.e., the Salem inequality,
	to obtain
	\begin{align}\label{equ-93-6}
		\sum_{\lambda_k\leq N}|u_k\varphi_k(x)|^2\leq \frac{4}{|I|} \int_{I}\big|\sum_{\lambda_k\leq N} e^{-\i t\lambda_k}u_k\varphi_k(x)\big|^2\d t,\;\;x\in\R,
	\end{align}
	provided that the interval $I$ satisfies $|I|\geq 4\pi/\Delta$, which is possible if
	\begin{align}\label{equ-93-7}
		|I|\sim N^{\frac{1}{2}}.
	\end{align}
	Without loss of generality, we can assume that $[0,T]\subset I$. Let
	$$
	n_0=\sharp\{\lambda_k: \lambda_k\leq N\}.
	$$
	Then if follows from  \eqref{equ2.3} that $n_0 \leq N^{\frac{3}{2}}$ for $T>0$ sufficient small. 
	Applying Lemma \ref{lem-Nazarov} (the Nazarov inequality),
	we obtain that
	\begin{align}\label{equ-93-8}
		\int_{I}\Big|\sum_{\lambda_k\leq N} e^{-\i t\lambda_k}u_k\varphi_k(x)\Big|^2\d t \leq \left( \frac{4A|I|}{T}\right)^{2N^{\frac{3}{2}}-1}\int_{\frac{T}{2}}^{\frac{3T}{4}}\Big|\sum_{\lambda_k\leq N} e^{-\i t\lambda_k}u_k\varphi_k(x)\Big|^2\d t,\quad x\in \R,
	\end{align}
	where $A>0$ is the constant given by Lemma \ref{lem-Nazarov}.
	
	Combining \eqref{equ-93-6} and \eqref{equ-93-8}, and integrating over $E$ we have
	\begin{align}\label{equ-93-9}
		\frac{4}{|I|}\left( \frac{4A|I|}{T}\right)^{2N^{\frac{3}{2}}-1}\int_E\int_{\frac{T}{2}}^{\frac{3T}{4}}\Big|\sum_{\lambda_k\leq N} e^{-\i t\lambda_k}u_k\varphi_k(x)\Big|^2\d t\d x\geq  \int_E\sum_{\lambda_k\leq N}|u_k\varphi_k(x)|^2\d x.
	\end{align}
	Meanwhile, by Lemma  \ref{lem-low-h} and Remark \ref{rmk3.1}, we have  that for some $c>0$ (independent of $k$), 
	$$\inf_k\int_E|\varphi_k(x)|^2\d x \geq c,$$
	which yields
	\begin{align}\label{equ-93-10}
		\int_E\sum_{\lambda_k\leq N}|u_k\varphi_k(x)|^2\d x\geq c\sum_{\lambda_k\leq N}|u_k|^2=c\big\|\Pi_Nu_0\big\|^2_{L^2(\R)}.
	\end{align}
	Finally, it follows from \eqref{equ-93-4}, \eqref{equ-93-9} and \eqref{equ-93-10} that for some absolute constant $A_1>0$
	\begin{align}\label{equ-93-11}
		\big \|\Pi_Nu_0\big\|^2\leq  \left( \frac{A_1N^{\frac{1}{2}}}{T}\right)^{2N^{\frac{3}{2}}}\int_{\frac{T}{2}}^{\frac{3T}{4}}\big\|e^{-\i t H}\Pi_Nu_0\big\|^2_{L^2(E)}\d t.
	\end{align}

	\noindent{\it Step 2. Estimate  of $(I-\Pi_M+\Pi_N)u_0$.}
	
	Let $M>2N$ be a quantity to be determined later. 
	Let $\eta\in C_0^\infty(0,1)$ be a cutoff function such that $\eta=1$ on $[\frac{1}{2},\frac{3}{4}]$. 
	Denote $\eta_T(t)=\eta(t/T)$ for  $t\in\R$ and
	write $\mathbf{1}_E$ for the characteristic function of $E$. Then we have
	\begin{multline}\label{equ-94-1}
		\int_\R \|e^{-\i t H}(I-\Pi_M+\Pi_N)u_0\|^2_{L^2(E)}\eta_T(t)\d t  = \int_\R \|e^{-\i t H}(I-\Pi_M)u_0\|^2_{L^2(E)}\eta_T(t)\d t\\
		+\int_\R \|e^{-\i t H} \Pi_Nu_0\|^2_{L^2(E)}\eta_T(t)\d t+ \mathcal{I},
	\end{multline}
	where
	\begin{multline}\label{equ-94-2}
		\mathcal{I}
		:=
		2 \mbox{ Re} \int_\R(\mathbf{1}_Ee^{-\i t H}(I-\Pi_M)u_0,\mathbf{1}_Ee^{-\i t H}\Pi_Nu_0)\eta_T(t)\d t\\
		= 2 \mbox{ Re} \sum_{\lambda_n\leq N}\sum_{\lambda_m>M} u_nu_m(\mathbf{1}_E\varphi_n,\mathbf{1}_E \varphi_m)\int_\R e^{\i(\lambda_n-\lambda_m)t} \eta_T(t)\d t.
	\end{multline}
	Notice that for each $l\in\N^+$, there is an absolute constant $D_l>0$ such that
	\begin{align}\label{3.44-12-23-w}
		|\widehat{\eta_T}(\tau)|\leq D_lT(1+|T\tau|)^{-l},\;\;\tau\in\R.
	\end{align}
	By 
	\eqref{equ-94-2} and \eqref{3.44-12-23-w}, we see that  for each $l\in\N^+$,
	\begin{align}\label{equ-94-3}
		|\mathcal{I}|&\leq  2\sum_{\lambda_n\leq N}\sum_{\lambda_m>M}|u_nu_m||
		\widehat{\eta_T}(\lambda_m-\lambda_n)|\nonumber\\
		&\leq \sum_{\lambda_n\leq N}\sum_{\lambda_m>M}(|u_n|^2+|u_m|^2)
		D_lT(1+T(\lambda_m-\lambda_n))^{-l}\nonumber\\
		\mbox{ (since $M>2N$) }&\leq \sum_{\lambda_n\leq N}\sum_{\lambda_m>M}(|u_n|^2+|u_m|^2)
		D_lT(1+2^{-1}T \lambda_m )^{-l}\nonumber\\
		&\leq I_{1,l}+I_{2,l},
	\end{align}
	where
	$$
	I_{1,l}:=\sum_{\lambda_m>M} D_lT(1+2^{-1}T \lambda_m )^{-l}\|u_0\|^2_{L^2(\R)},
	$$
	and
	\begin{equation*}
		I_{2,l}:= \sharp\{n:\lambda_n\leq N\} \sum_{\lambda_m>M} |u_m|^2 D_lT(1+2^{-1}T \lambda_m )^{-l}.
	\end{equation*}
	For the term $I_{1,l}$, using the asymptotic relation $\lambda_m\sim m^{\frac{2}{3}}$, we see that for $l\geq 2$,
	\begin{align}\label{equ-94-4}
		I_{1,l}\lesssim C_l2^lT^{1-l}\sum_{\lambda_m>M}m^{-\frac{2}{3}l}\|u_0\|^2_{L^2(\R)}\lesssim D_l2^lT^{1-l}M^{\frac{3}{2}-l}\|u_0\|^2_{L^2(\R)}.
	\end{align}
	For  the term  $I_{2,l}$, noting that $\sharp\{n:\lambda_n\leq N\}\lesssim N^{\frac{3}{2}}\lesssim M^{\frac{3}{2}}$, we have
	that for each $l\in\N^+$,
	\begin{align}\label{equ-94-5}
		I_{2,l}\lesssim M^{\frac{3}{2}}\sum_{\lambda_m>M} |u_m|^2 D_lT(1+2^{-1}T M )^{-l}\lesssim D_l2^lT^{1-l}M^{\frac{3}{2}-l}\|u_0\|^2_{L^2(\R)}.
	\end{align}
	Combining \eqref{equ-94-3}-\eqref{equ-94-5} and setting $l=2$, we deduce  that there is an absolute constant $C_1>0$ such that
	\begin{align}\label{equ-94-6}
		|\mathcal{I}| \leq C_1T^{-1}M^{-\frac{1}{2}}\|u_0\|^2_{L^2(\R)}.
	\end{align}
	Putting \eqref{equ-93-3} (whith $K=N$), \eqref{equ-93-11} and \eqref{equ-94-6} into \eqref{equ-94-1}, and recalling that $\eta_T=1$ on $[\frac{T}{2},\frac{3T}{4}]$, we find
	\begin{multline}\label{equ-94-7}
		\int_{0}^{T} \big\|e^{-\i t H}(I-\Pi_M+\Pi_N)u_0\big\|^2_{L^2(E)} \d t \geq (2C)^{-1}T\|(I-\Pi_M)u_0\|^2_{L^2(\R)}\\
		+ \left( \frac{A_1N^{\frac{1}{2}}}{T}\right)^{-2N^{\frac{3}{2}}}\|\Pi_Nu_0\|^2_{L^2(\R)}-C_1T^{-1}M^{-\frac{1}{2}}\|u_0\|^2_{L^2(\R)}.
	\end{multline}
	
	\noindent 
	{\it Step 3. Estimate  of $(\Pi_M-\Pi_N)u_0$.} 
	
	Let $\lambda_n$ (resp. $\lambda_m$) be the largest eigenvalue that is less than or equal to $N$ (resp. $M$). 
	Then, we have
	\begin{align}\label{3.50-12-23-w}
		\Pi_N(L^2(\R))=\mbox{span}\{\varphi_1,\varphi_2,\cdots,\varphi_n\};\;\;\;\Pi_M(L^2(\R))=\mbox{span}\{\varphi_1,\varphi_2,\cdots,\varphi_m\}.
	\end{align}
	We define 
	\begin{align}\label{equ-94-8}
		\tau=\frac{T}{10m}
	\end{align}
	and  introduce the  Vandermonde matrix:
	\begin{align}\label{equ-94-9}
		\mathbb{A}=(e^{-\i \lambda_jk\tau})_{1\leq j, k\leq m}.
	\end{align}
	We claim that there is a numerical constant $A_2>0$ such that
	\begin{align}\label{equ-94-10}
		\|\mathbb{A}^{-1}\|\leq (\frac{A_2m}{T})^{m^2}.
	\end{align}
	The proof of \eqref{equ-94-10} is given in Appendix \ref{app-002}.

	Since $\mathbb{A}$ is non-singular, for every $b\in \R^m$, there is a unique solution $y\in \R^m$ such that $\mathbb{A}y=b$. 
	We take  
	$$
	b=(\underbrace{0,\cdots,0}_{n },\underbrace{1,\cdots, 1}_{m-n})^T.
	$$
	Then the corresponding  solution $y=(y_1,\cdots,y_m)^T$ satisfies that
	\begin{align}\label{equ-94-11}
		\sum_{1\leq k\leq m}y_ke^{-\i \lambda_jk\tau}=0\;\;\mbox{ for }\;\; j\leq n;\;\; \sum_{1\leq k\leq m}y_ke^{-\i \lambda_jk\tau}=1\;\;
		\mbox{for}\;\; n<j\leq m.
	\end{align}
	With the above  $y$, we define a modified initial datum:
	\begin{align}\label{equ-94-12}
		\widetilde{u_0}=\sum_{n\geq 1}\Big(\sum_{1\leq k\leq m} y_ke^{-\i \lambda_nk\tau}\Big)u_n\varphi_n,\;\;\mbox{with}\;\; u_n=(u_0,\varphi_n).
	\end{align}
	It follows from \eqref{equ-94-11}, \eqref{equ-94-12} and \eqref{3.50-12-23-w} that
	$$
	(1-\Pi_N)\widetilde{u_0}=\widetilde{u_0}.
	$$
	This, along with 
	\eqref{equ-93-3} (where $u_0=\widetilde{u_0}$ and $K=N$), yields
	\begin{multline}\label{equ-94-13}
		\frac{2C}{T}\int_{\frac{T}{2}}^{\frac{3T}{4}}\|e^{-\i t H}(I-\Pi_N)\widetilde{u_0}\|^2_{L^2(E)}\d t\geq \|\widetilde{u_0}\|^2_{L^2(\R)} =  \sum_{\lambda_n>N}\big|\sum_{1\leq k\leq m} y_ke^{-\i \lambda_nk\tau}\big|^2\,|u_n|^2\\
		\ge \sum_{N<\lambda_n\leq M}\big|\sum_{1\leq k\leq m} y_ke^{-\i \lambda_nk\tau}\big|^2|u_n|^2
		= \|(\Pi_M-\Pi_N)u_0\|^2_{L^2(\R)},
	\end{multline}
	where in the last step we used the second identity in \eqref{equ-94-11}.

	Meanwhile, by \eqref{equ-94-12}, we also have 
	$$
	e^{-\i t H}\widetilde{u_0}=\sum_{1\leq k\leq m}y_ke^{-\i(t+k\tau)H}u_0.
	$$
	Since $y=\mathbb{A}^{-1}b$, the above, together with \eqref{equ-94-8}
	and \eqref{equ-94-10}, shows 
	\begin{align}\label{equ-94-14}
		\int_{\frac{T}{2}}^{\frac{3T}{4}}\|e^{-\i t H}(I-\Pi_N)\widetilde{u_0}\|^2_{L^2(E)}\d t
		&\leq m \max_{1\leq k\leq m}|y_k| \int_{0}^{T}\|e^{-\i t H}u_0\|^2_{L^2(E)}\d t\nonumber\\
		&\leq m^2\Big(\frac{A_2m}{T}\Big)^{m^2}\int_{0}^{T}\|e^{-\i t H}u_0\|^2_{L^2(E)}\d t.
	\end{align}
	Combining \eqref{equ-94-13} and \eqref{equ-94-14} gives
	\begin{align}\label{equ-94-15}
		\frac{2C}{T}m^2\Big(\frac{A_2m}{T}\Big)^{m^2}\int_{0}^{T}\|e^{-\i t H}u_0\|^2_{L^2(E)}\d t\geq \|(\Pi_M-\Pi_N)u_0\|^2_{L^2(\R)}.
	\end{align}

	\noindent {\it Step 4. Gluing together.} 
	
	Using the triangle inequality and \eqref{equ-94-15}, we infer that
	\begin{align}\label{equ-94-16}
		\int_{0}^{T} \|e^{-\i t H}(I-\Pi_M+\Pi_N)u_0\|^2_{L^2(E)} \d t &\leq 2\int_{0}^{T} \|e^{-\i t H}u_0\|^2_{L^2(E)} \d t+2 \int_{0}^{T} \|e^{-\i t H}(\Pi_M-\Pi_N)u_0\|^2_{L^2(E)} \d t\nonumber\\
		&\leq   2\int_{0}^{T} \|e^{-\i t H}u_0\|^2_{L^2(E)} \d t+2 \|(\Pi_M-\Pi_N)u_0\|^2_{L^2(\R)}\nonumber\\
		&\leq \Big(2+\frac{4C}{T}m^2\Big(\frac{A_2m}{T}\Big)^{m^2}\Big)\int_{0}^{T}\|e^{-\i t H}u_0\|^2_{L^2(E)}\d t.
	\end{align}
	Combining \eqref{equ-94-7} and \eqref{equ-94-15}-\eqref{equ-94-16} gives
	\begin{multline}\label{equ-94-17}
		\Big(2+\frac{6C}{T}m^2\Big(\frac{A_2m}{T}\Big)\Big)\int_{0}^{T}\|e^{-\i t H}u_0\|^2_{L^2(E)}\d t
		\geq (2C)^{-1}T\|(I-\Pi_M)u_0\|^2_{L^2(\R)}\\ + \left( \frac{A_1N^{\frac{1}{2}}}{T}\right)^{-2N^{\frac{3}{2}}}\|\Pi_Nu_0\|^2+\|(\Pi_M-\Pi_N)u_0\|^2_{L^2(\R)}-C_1T^{-1}M^{-\frac{1}{2}}\|u_0\|^2_{L^2(\R)}.
	\end{multline}
	Choosing $M>2N$ large enough so that
	\begin{align}\label{equ-94-18}
		C_1T^{-1}M^{-\frac{1}{2}}\leq \frac{1}{2}\min\left\{(2C)^{-1}T, \left( \frac{A_1N^{\frac{1}{2}}}{T}\right)^{-2N^{\frac{3}{2}}} ,1 \right\}.
	\end{align}
	Then by \eqref{equ-94-17}, we obtain
	\begin{align}\label{equ-94-19}
		\|u_0\|^2_{L^2(\R)}\leq C_1^{-1}TM^{\frac{1}{2}}(2+\frac{6C}{T}m^2(\frac{A_2m}{T})^{m^2})\int_{0}^{T}\|e^{-\i t H}u_0\|^2_{L^2(E)}\d t.
	\end{align}

	Finally, we  express  $C_{obs}$ in terms of $T$. 
	In the case $\alpha\in(\frac{1}{2},1)$, we see from  \eqref{equ-93-2.5} that
	$M$ satisfies   \eqref{equ-94-18}, provided that 
	\begin{align}\label{equ-94-20}
		M=\left(C_2N \right)^{-2N^{\frac{3}{2}}}\leq \exp\left\{C_3T^{- \frac{3}{2}(\frac{1}{2}+\frac{3}{2\alpha-1})}\log \frac{1}{T}\right\},
	\end{align}
	where $C_2>0$ and $C_3>0$ are absolute  constants. 
	Meanwhile, since $m^{\frac{2}{3}}\sim \lambda_m\sim M$, we find $m\sim M^{\frac{3}{2}}$. 
	This, together with  \eqref{equ-94-20} and \eqref{equ-94-19}, gives
	$$
	\|u_0\|^2_{L^2(\R)}\leq C_{obs}\int_{0}^{T}\|e^{-\i t H}u_0\|^2_{L^2(E)}\d t
	$$
	with
	\begin{align*}
		C_{obs}\leq m^{\mathcal {O}(m^2)}\leq M^{\mathcal {O}(M^3)} \leq e^{\mathcal {O}(M^4)}\leq e^{C_4e^{4C_3T^{- \frac{3}{2}(\frac{1}{2}+\frac{3}{2\alpha-1})}\log \frac{1}{T}}}\leq e^{ e^{C_5T^{- \frac{3}{2}(\frac{1}{2}+\frac{3}{2\alpha-1})}\log \frac{1}{T}}},
	\end{align*}
	where $C_4>0$ and $C_5>0$ are absolute  constants. 
	In the case $\alpha\geq 1$,  similar computations show that for some absolute constant $C_6>0$,
	$$
	C_{obs}\leq
	\left\{
	\begin{array}{ll}
		\exp\left\{\exp\big[C_6T^{- \frac{21}{4}}(\log \frac{1}{T})^{\frac{11}{2}}\big]\right\}, \quad & \alpha=1,\\
		\exp\left\{\exp\big[ C_6T^{- \frac{21}{4}}\log \frac{1}{T}\big]\right\}, \quad & \alpha>1.
	\end{array}
	\right.
	$$
	This completes the proof.
\end{proof}

\begin{remark}\label{rmk4.2}
	Theorem \ref{thm1.1} shows that any set $E$ with $E^c$ being $\alpha$-thin ($\alpha>1/2$) qualifies as an observable set at any time for \eqref{equ1.1}. While \cite{SSY} presents that the thickness is the characterization for the observable sets for the free  Schr\"{o}dinger equations. However, 
	it follows from 
	Remark \ref{rmk3.1} that the class of sets $E$ with $E^c$ $\alpha$-thin ($\alpha>1/2$) is
	strictly contained in the  broader class of thick sets. 
	It is interesting to explore that whether thick sets are also  observable sets of equation \eqref{equ1.1}.
\end{remark}

\subsection{Perturbation with a bounded potential}\label{app-per}

This subsection shows that Theorem \ref{thm1.1} remains true (up to the explicit control cost $C_{obs}$) for the operator $-\frac{\d^2}{\d x^2}+|x|+V(x)$, where $V\in L^\infty(\R)$. The key of its proof is the following resolvent estimate:
\begin{lemma}\label{lem-pert-1}
	Let $H=-\frac{\d^2}{\d x^2}+|x|$. Let $V\in L^\infty(\R)$ be real valued. Suppose that $E^c$
	is $\alpha$-thin with $\alpha> \frac{1}{2}$. Then for each $\lambda\in \R$,
	\begin{align}\label{equ-125-1}
		\|u\|_{L^2(\R)}\lesssim \Upsilon_2(\lambda,\alpha)\|(H+V-\lambda)u\|_{L^2(\R)}+\|u\|_{L^2(E)},
		\;\;u\in D(H),
	\end{align}
	where $\Upsilon_2(\lambda,\alpha)$ is given by Lemma \ref{lem-92-resolve}.
\end{lemma}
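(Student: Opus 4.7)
\textbf{Proof plan for Lemma \ref{lem-pert-1}.} The idea is to split into two regimes according to the size of $|\lambda|$ and use different mechanisms in each. For $|\lambda|$ large the perturbation $V$ is absorbable because $\Upsilon_2(\lambda,\alpha) \to 0$ as $|\lambda|\to\infty$ (here is where the hypothesis $\alpha > \frac12$ is essential). For $|\lambda|$ bounded, $\Upsilon_2(\lambda,\alpha)$ is bounded below by a positive constant, so the desired inequality collapses to a qualitative estimate that can be proved by a compactness/contradiction argument using unique continuation.

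\textbf{Large $\lambda$ regime.} Apply Lemma \ref{lem-92-resolve} to $u$ together with the identity $(H-\lambda)u=(H+V-\lambda)u-Vu$, and use $\|Vu\|_{L^2}\leq \|V\|_\infty \|u\|_{L^2}$:
\begin{equation*}
\|u\|_{L^2(\R)}\leq C\Upsilon_2(\lambda,\alpha)\|(H+V-\lambda)u\|_{L^2(\R)}+C\Upsilon_2(\lambda,\alpha)\|V\|_\infty\|u\|_{L^2(\R)}+C\|u\|_{L^2(E)}.
\end{equation*}
Since $\alpha>\frac12$, we have $\Upsilon_2(\lambda,\alpha)\to 0$ as $|\lambda|\to\infty$, so there exists $\Lambda_0=\Lambda_0(\|V\|_\infty,\alpha)>0$ such that $C\Upsilon_2(\lambda,\alpha)\|V\|_\infty\leq \tfrac12$ whenever $|\lambda|\geq \Lambda_0$. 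For such $\lambda$ the middle term is absorbed, yielding \eqref{equ-125-1}.

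\textbf{Bounded $\lambda$ regime.} For $|\lambda|\leq \Lambda_0$ we have $\Upsilon_2(\lambda,\alpha)\gtrsim 1$, so it suffices to prove the (qualitative) bound
\begin{equation*}
\|u\|_{L^2(\R)}\leq C_0 \|(H+V-\lambda)u\|_{L^2(\R)}+C_0\|u\|_{L^2(E)},\qquad |\lambda|\leq \Lambda_0,
\end{equation*}
with $C_0=C_0(\Lambda_0,\|V\|_\infty,E)$. Argue by contradiction: if the bound fails, pick a sequence $(\lambda_n,u_n)$ with $|\lambda_n|\leq \Lambda_0$, $\|u_n\|_{L^2}=1$, $\|(H+V-\lambda_n)u_n\|_{L^2}\to 0$, and $\|u_n\|_{L^2(E)}\to 0$. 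Since $H$ has compact resolvent and $V$ is bounded, $H+V$ is self-adjoint with compact resolvent (use the Neumann series $(H+V+M)^{-1}=(H+M)^{-1}(I+V(H+M)^{-1})^{-1}$ for $M$ large), hence the embedding $D(H+V)\hookrightarrow L^2(\R)$ is compact. Therefore a subsequence of $u_n$ converges strongly in $L^2(\R)$ to some $u$ with $\|u\|_{L^2}=1$, and we may assume $\lambda_n\to\lambda_\infty$. Passing to the limit yields $(H+V-\lambda_\infty)u=0$ in $L^2$ and $u|_E=0$.

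\textbf{Unique continuation to reach a contradiction.} Since $u\in D(H+V)\subset H^2_{\mathrm{loc}}(\R)$ satisfies the ODE $-u''+(|x|+V-\lambda_\infty)u=0$ with $|x|+V-\lambda_\infty\in L^\infty_{\mathrm{loc}}(\R)$, $u$ is in particular $C^1(\R)$. Since $E^c$ is $\alpha$-thin, $E$ is weakly thick (Remark \ref{rmk3.1}) and hence has positive Lebesgue measure; pick a density point $x_0$ of $E$. Then $u(x_0)=0$, and writing $u'(x_0)=\lim_{x\to x_0,\,x\in E}(u(x)-u(x_0))/(x-x_0)=0$ using the density of $E$ at $x_0$ together with $u\in C^1$, we obtain $u(x_0)=u'(x_0)=0$. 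Standard ODE uniqueness then forces $u\equiv 0$, contradicting $\|u\|_{L^2}=1$.

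\textbf{Main obstacle.} The technically delicate point is the bounded-$\lambda$ case: one must ensure (i) $H+V$ has compact resolvent despite $V$ being only $L^\infty$ (a short Neumann-series argument), and (ii) the unique continuation step from $u|_E=0$ to $u\equiv 0$, which in this 1D second-order ODE setting comes down to passing from the density point property to the vanishing of $u'(x_0)$ using only the $C^1$ regularity of $u$. Combining the two regimes then gives \eqref{equ-125-1}, with a constant that is uniform in $\lambda$ once $\|V\|_\infty$ is fixed.
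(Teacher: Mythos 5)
Your proposal is correct and follows essentially the same route as the paper: apply Lemma \ref{lem-92-resolve} and absorb the bounded perturbation using $\Upsilon_2(\lambda,\alpha)\to 0$ for large $|\lambda|$, then treat bounded $\lambda$ by a compactness--contradiction argument that ends with one-dimensional unique continuation from a density point of $E$ (vanishing of $u$ and $u'$ there, then ODE uniqueness). The only cosmetic differences are that the paper handles very negative $\lambda$ by coercivity rather than absorption and spells out the final ODE uniqueness step via a Volterra/contraction argument, neither of which changes the substance.
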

\begin{proof}
	It follows from Lemma \ref{lem-92-resolve} that  for all $\lambda\in \R$ and $u\in D(H)$,
	\begin{align}\label{equ-125-2}
		\|u\|_{L^2(\R)}&\lesssim \Upsilon_2(\lambda,\alpha)\|(H-\lambda)u\|_{L^2(\R)}+\|u\|_{L^2(E)}\nonumber\\
		&\lesssim \Upsilon_2(\lambda,\alpha)\|(H+V-\lambda)u\|_{L^2(\R)}+\Upsilon_2(\lambda,\alpha)\|V\|_{L^\infty(\R)}\|u\|_{L^2(\R)}+\|u\|_{L^2(E)}.
	\end{align}
	Since $V\in L^\infty(\R)$ and $\Upsilon_2(\lambda,\alpha)\to 0$ as $\lambda\to \infty$, we can choose $\lambda_0>0$ large enough so that, for all $\lambda\geq \lambda_0$, $\Upsilon_2(\lambda,\alpha)\|V\|_{L^\infty(\R)}\|u\|_{L^2(\R)}$ can be absorbed by
	the term on the left hand side of \eqref{equ-125-2}. This proves \eqref{equ-125-1} for $\lambda\geq \lambda_0$.
	
	Next, we suppose that   $\lambda \leq -1-2\|V\|_{L^\infty(\R)}$. Then 
	$$
	((H+V-\lambda)u,u)\geq (|\lambda|-\|V\|_{L^\infty(\R)})\|u\|^2_{L^2(\R)}\gtrsim (1+|\lambda|)\|u\|^2_{L^2(\R)},\; u\in D(H).
	$$
	This, together with the Cauchy-Schwartz inequality, implies that
	$$
	\|u\|_{L^2(\R)}\lesssim (1+|\lambda|)^{-1}\|(H+V-\lambda)u\|_{L^2(\R)}\lesssim \Upsilon_2(\lambda,\alpha)\|(H-\lambda)u\|_{L^2(\R)},\;u\in D(H).
	$$
	Thus \eqref{equ-125-1} also holds in this case.
	
	Finally, we suppose that  $-1-2\|V\|_{L^\infty(\R)}<\lambda<\lambda_0$. Then, \eqref{equ-125-1} is equivalent to 
	\begin{align}\label{equ-125-3}
		\|u\|_{L^2(\R)}\lesssim  \|(H+V-\lambda)u\|_{L^2(\R)}+\|u\|_{L^2(E)}.
	\end{align}
	By contradiction, we 
	suppose that  \eqref{equ-125-3}
	is not true. Then there exist sequences $\{u_n\}_{n\geq 1}\subset D(H)$ and $\{\lambda_n\}_{n\geq 1}\subset (-1-2\|V\|_{L^\infty(\R)},\lambda_0)$ such that
	\begin{align}\label{equ-125-4}
		\|u_n\|_{L^2(\R)}=1, \quad  \|(H+V-\lambda_n)u_n\|_{L^2(\R)}+\|u_n\|_{L^2(E)}\leq \frac{1}{n}\; \; \mbox{for all}\;\; n\geq 1.
	\end{align}
	This yields that $\{u_n\}$ is uniformly bounded in $D(H)$. Thus, without loss of generality, we can assume that $u_n\to u$ weakly in $D(H)$, $\lambda_n\to \lambda \in [-1-2\|V\|_{L^\infty(\R)},\lambda_0]$ as $n\to \infty$. Since the embedding $D(H)\hookrightarrow L^2(\R)$ is compact, we find $u_n \to u$ strongly in $L^2(\R)$. Thus
	\begin{align}\label{equ-125-5}
		(H+V-\lambda)u=0 \,\,\mbox{ on }\, \R, \quad u=0\,\, \mbox{ a.e. on } \,\, E.    
	\end{align}
	Since $u\in D(H)$, we find that $u'$ is continuous and $u(x)=0$ for each $x\in E$.
	
	We claim that $u'=0$ almost everywhere on $E$. For this purpose,
	it suffices 
	to show that $u'(x_0)=0$ for an arbitrarily fixed point $x_0$ of density of $E$. Clearly, $x_0$ satisfies
	\begin{align}\label{equ-125-5.5}
		\lim_{\varepsilon\to 0}\frac{|E\cap (x_0-\varepsilon,x_0+\varepsilon)|}{|(x_0-\varepsilon,x_0+\varepsilon)|}=1. 
	\end{align}
	From \eqref{equ-125-5.5}, we can find a sequence $\{x_n\}\subset E$ such that $x_n\to x_0$. Thus, $u(x_0)=0$ and $u(x_n)=0$ for all $n\geq 1$. Consequently, 
	$$
	u'(x_0)=\lim_{n\to \infty}\frac{u(x_n)-u(x_0)}{x_n-x_0}=0,
	$$
	which leads to the claim above. 
	
	Now we arbitrarily  fix  $x_0\in E$ such that $u(x_0)=u'(x_0)=0$. It follows from \eqref{equ-125-5} that
	\begin{align}\label{equ-125-6}
		u(x)=\int^x_{x_0}\int^y_{x_0} (|z|+V(z)-\lambda)u(z)\d z \d y, \quad x\in \R. 
	\end{align}
	Let $I$ be a compact interval containing $x_0$. We define a linear operator $T:L^\infty(I)\to L^\infty(I)$ in the manner: for each $v\in L^\infty(I)$, we let $Tv$ to be  the right hand side of \eqref{equ-125-6} where $u=v$. Then, it is easy to see that $T^m$ is a contraction mapping on $L^\infty(I)$ for $m$ large enough. (Note that $m$ depends  only on $\lambda, \|V\|_{L^\infty(\R)}$ and $|I|$.) Thus \eqref{equ-125-6} has a unique solution, that is, $u\equiv 0$ on $I$. Since the size $|I|$ can be arbitrarily large, we find $u\equiv 0$ on $\R$. This  contradicts with $\|u\|_{L^2(\R)}=1$ in \eqref{equ-125-4}. Thus \eqref{equ-125-3} is proved.
	
	This completes the proof.
\end{proof}

With Lemma \ref{lem-pert-1} in hand, we can use the similar to that proving Proposition \ref{lem-ob-comp} to obtain that for any $T\in(0,\frac{1}{2})$,
\begin{align}\label{equ-125-10}
	\|u_0\|^2_{L^2(\R)}\lesssim T^{-1} \int_0^T\|e^{-\i t(H+V)}u_0\|^2_{L^2(E)}\d t+\Upsilon_1(T,\alpha)\|u_0\|^2_{\widetilde{\mathcal {H}}^{-1}},\;\;u_0\in L^2(\R),
\end{align}
where $\Upsilon_1(T,\alpha)$ is given by Proposition \ref{lem-ob-comp}, and 
$$
\|u\|_{\widetilde{\mathcal {H}}^k}:=\left( \sum_{n=1}^\infty (1+\lambda_n)^{2k}|(u,\varphi_n)|^2 \right)^{\frac{1}{2}},\;\;k\in \R
$$
with $\lambda_n$ and $\varphi_n$ being eigenvalues and eigenfunctions of $H+V$ in $L^2(\R)$, respectively.

Thanks to Lemma \ref{lem-pert-1} and \eqref{equ-125-10}, we can apply the general result in \cite[Theorem 4]{Bour13} to obtain the following observability inequality:
\begin{theorem}\label{thm-potent}
	Let $H=-\frac{\d^2}{\d x^2}+|x|$  and $V\in L^\infty(\R)$ be real-valued. Assume that $E\subset \R$ is a measurable set such that its complement set $E^c$ is $\alpha$-thin  with $\alpha>\frac12$. Then for any $T>0$, there exists  $C_{obs}$ depending only on $\alpha,E,T$ and $\|V\|_{L^\infty(\R)}$ such that
	$$
	\|u_0\|^2_{L^2(\R)}\leq C_{obs}\int_0^T\|e^{-\i t(H+V)}u_0\|^2_{L^2(E)}\d t, \;\; \mbox{ for all } u_0\in L^2(\R).
	$$
\end{theorem}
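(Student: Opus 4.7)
The plan is to repeat, in the perturbed setting, the two-step scheme used for Theorem~\ref{thm1.1}, with $H$ replaced by $\tilde H:=H+V$ throughout. Lemma~\ref{lem-pert-1} already supplies the resolvent estimate for $\tilde H$ with exactly the same loss $\Upsilon_2(\lambda,\alpha)$ as in the unperturbed case. The Fourier-transform-in-time argument behind Proposition~\ref{lem-ob-comp} uses only the resolvent estimate together with the self-adjointness and compact resolvent of the operator, so the same computation---with $H$ replaced by $\tilde H$ and the norm $\mathcal{H}^{-1}$ replaced by $\widetilde{\mathcal{H}}^{-1}$---yields the relaxed observability inequality \eqref{equ-125-10}. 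At this point no further perturbation-specific work is needed to assemble the required ingredients.

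To upgrade \eqref{equ-125-10} into the genuine observability inequality, I would invoke the abstract compactness-uniqueness principle of Bourgain--Burq--Zworski \cite[Theorem~4]{Bour13}. Informally, this principle says that once a self-adjoint operator with compact resolvent satisfies a relaxed observability of the form \eqref{equ-125-10} with the error measured in a norm strictly weaker than $L^2$, the genuine observability inequality follows, provided the qualitative unique continuation ``$e^{-it\tilde H}u_0=0$ on $(0,T)\times E\Rightarrow u_0=0$'' holds. The compactness of the embedding $L^2(\R)\hookrightarrow\widetilde{\mathcal{H}}^{-1}$ is automatic since $\tilde H$ has compact resolvent, so the scheme reduces to this qualitative unique continuation.

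I would verify unique continuation as follows. Expanding $u_0$ in eigenfunctions of $\tilde H$ gives $e^{-it\tilde H}u_0(x)=\sum_n u_n e^{-it\lambda_n}\varphi_n(x)$. If this almost-periodic function vanishes for a.e.\ $(t,x)\in(0,T)\times E$, then a standard Fourier-type argument in the $t$-variable---using that the eigenvalues $\lambda_n$ of $\tilde H$ are simple, itself a consequence of the fact that the ODE $-\varphi''+(|x|+V-\lambda)\varphi=0$ admits at most one square-integrable solution for each $\lambda$---forces $u_n\varphi_n(x)=0$ for every $n$ and a.e.\ $x\in E$. It then suffices to show that each $\varphi_n$ is non-vanishing on $E$ in the a.e.\ sense, which is essentially the content of the last portion of the proof of Lemma~\ref{lem-pert-1}: at any Lebesgue density point $x_0$ of a hypothetical null-set one would have $\varphi_n(x_0)=\varphi_n'(x_0)=0$, and the ODE contraction argument then forces $\varphi_n\equiv 0$, contradicting its $L^2$-normalization.

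The main obstacle is quantitative rather than qualitative: the abstract scheme above delivers only the existence of $C_{obs}$, not the explicit expression in terms of $T$ obtained in Theorem~\ref{thm1.1}. Reproducing the quantitative argument of Section~\ref{sec3.3} in the perturbed setting would require a uniform lower bound $\int_E|\varphi_n|^2\,dx\geq c>0$ for the perturbed eigenfunctions, together with explicit gap bounds $\lambda_{n+1}-\lambda_n\sim\lambda_n^{-1/2}$ for the perturbed eigenvalues; both facts were derived in the unperturbed case from the Airy-function description in Lemma~\ref{lem2.1}, which is no longer available. Min--max arguments transfer the asymptotic gap up to additive $O(\|V\|_{L^\infty})$ corrections at large $n$, but the uniform eigenfunction lower bound would likely demand Agmon/WKB-type estimates, which accounts for the fact that Theorem~\ref{thm-potent} only asserts the existence of $C_{obs}$ (with dependence on $\|V\|_{L^\infty}$) and does not reproduce the explicit formula \eqref{equ1.18,1}.
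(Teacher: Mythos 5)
Your overall route coincides with the paper's: Lemma \ref{lem-pert-1} supplies the resolvent estimate for $H+V$ with the same loss $\Upsilon_2(\lambda,\alpha)$, the Fourier-in-time argument behind Proposition \ref{lem-ob-comp} transfers verbatim to give the relaxed observability inequality \eqref{equ-125-10} with error in $\widetilde{\mathcal H}^{-1}$, and the conclusion is then drawn from the abstract result of \cite[Theorem 4]{Bour13}; the paper does exactly this, and, like you, it does not attempt to reproduce the explicit cost \eqref{equ1.18,1} in the perturbed setting.

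The one step that does not hold as you state it is your verification of unique continuation. From $\sum_n u_n e^{-\i t\lambda_n}\varphi_n=0$ a.e.\ on $(0,T)\times E$ you claim that a ``standard Fourier-type argument in the $t$-variable'' forces $u_n\varphi_n=0$ on $E$ for every $n$. Here the spectral gaps satisfy $\lambda_{n+1}-\lambda_n\sim\lambda_n^{-1/2}\to 0$ (and this persists under a bounded perturbation), so no Ingham-type separation of modes is available on a fixed bounded time interval; in general an almost periodic function with accumulating frequencies can vanish on a bounded interval without any single mode vanishing (take frequencies in $\tfrac1N\mathbb{Z}$ with $N$ huge and a smooth periodic function supported away from $(0,T)$ within one period). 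This failure of modewise separation is precisely why the paper needs the relaxed-observability/compactness machinery at all. The gap is repairable with tools you already have: by \eqref{equ-125-10} and the compactness of the embedding $L^2(\R)\hookrightarrow\widetilde{\mathcal H}^{-1}$, the subspace of invisible data is finite dimensional and invariant under $H+V$, hence, if nontrivial, it contains an eigenfunction of $H+V$ vanishing a.e.\ on $E$, and this is excluded by exactly the density-point/ODE contraction argument at the end of the proof of Lemma \ref{lem-pert-1}. Alternatively, observe that the paper feeds the resolvent estimate of Lemma \ref{lem-pert-1}---valid for every $\lambda\in\R$, the compact range having been handled there by that same ODE argument---directly into \cite[Theorem 4]{Bour13}, so no separate qualitative unique continuation step is required in the first place.
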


\section{Observability at some time and the proof of Theorem \ref{thm1.2}}\label{sec4}
We first present a characterization of \emph{an observable set at some time} for equation \eqref{equ1.1}.
Given $\varepsilon>0$ and $n\in\N^+$, we simply write  $J^\varepsilon(\lambda_n)$ 
for the index set $J^\varepsilon_{1/2}(\lambda_n)$ defined by \eqref{equ-92-1.1}.
Then for each $n\in\N^+$,  there is $n_0, K_n\in\N$ such that 
\begin{align}\label{equ3.1}
	J^\varepsilon(\lambda_n):= \{k\in \mathbb{N}^+ \;:\; |\lambda_k-\lambda_n|<\varepsilon\}
	=\{n_0+1,\cdots, n_0+K_n\}.
\end{align}
\begin{proposition}\label{prop3.1}
	Let $E\subset \R$ be a measurable set. Let $\{\varphi_k\}_{k=1}^\infty$ be the sequence of eigenfunctions of $H=-\frac{\d^2}{\d x^2}+|x|$ (normalized in $L^2$) associated with the eigenvalues $\{\lambda_k\}_{k=1}^\infty$.
	Then following  statements are equivalent:
	
	\noindent $(i)$ The set $E$ is  an observable set  at some time  for the equation \eqref{equ1.1}.
	
	\noindent $(ii)$  There exists  $\varepsilon>0$ and $\delta>0$ such that for all $n\ge 1$, the sequence of positive semi-definite matrices $\{A_{E, n}\}_{n=1}^{+\infty}$, where
	\begin{align}\label{equ3.3}
		A_{E, n}=\left(a_{k,j}\right)_{K_n\times K_n},\;\; \mbox{where}\;\; a_{k,j}=\int_{E}{\varphi_{k}(x)\cdot\varphi_{j}(x)\,\mathrm dx},\;\; k,j\in J^\varepsilon(\lambda_n)
	\end{align}
	are uniformly positive in the sense that the smallest eigenvalue of $A_{E, n}$ is uniformly bounded below, i.e., there exists  $\delta>0$ independent of $n$ such that
	\begin{equation}\label{equ3.4}
		\lambda_{1}(A_{E, n})\ge \delta>0,\;\;\,\,\, \mbox{for all}\;\; n\in\mathbb{N}^+.
	\end{equation}
\end{proposition}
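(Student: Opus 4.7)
The plan is to establish the two implications separately. Both rest on the observation that (ii) is a spectral inequality saying that the restriction map $u\mapsto u|_E$ is uniformly bounded below on every finite-dimensional spectral subspace $V_n:=\mathrm{span}\{\varphi_k:k\in J^\varepsilon(\lambda_n)\}$. Direction $(i)\Rightarrow(ii)$ is a direct test-function calculation, while $(ii)\Rightarrow(i)$ reduces to a block observability criterion in the spirit of Ramdani-Takahashi-Tenenbaum-Tucsnak \cite{RTTT}.

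For $(i)\Rightarrow(ii)$, I would fix $T>0$ at which observability holds with constant $C_{obs}$. Given $n\in\N^+$ and coefficients $c=(c_k)_{k\in J^\varepsilon(\lambda_n)}$, test with the initial datum $u_0:=\sum_{k\in J^\varepsilon(\lambda_n)}c_k\varphi_k$, so that $\|u_0\|_{L^2(\R)}^2=\|c\|^2$. The solution $u(t)=e^{-\i\lambda_n t}\sum_k c_k e^{-\i(\lambda_k-\lambda_n)t}\varphi_k$ satisfies $\|u(t)-e^{-\i\lambda_n t}u_0\|_{L^2(\R)}\le\varepsilon t\,\|c\|$, using $|e^{-\i\theta}-1|\le|\theta|$ together with $|\lambda_k-\lambda_n|<\varepsilon$ on the cluster. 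Expanding $\|u(t)\|_{L^2(E)}^2-\|u_0\|_{L^2(E)}^2$ and applying Cauchy-Schwarz yields $\bigl|\|u(t)\|_{L^2(E)}^2-\langle A_{E,n}c,c\rangle\bigr|\le 2\varepsilon t\,\|c\|^2$, which integrates to
\begin{equation*}
\int_0^T\|u(t)\|_{L^2(E)}^2\,\d t\ \le\ T\,\langle A_{E,n}c,c\rangle+\varepsilon T^2\|c\|^2.
\end{equation*}
Substituting into the observability inequality and choosing $\varepsilon$ so small that $C_{obs}\varepsilon T^2\le 1/2$ gives $\lambda_1(A_{E,n})\ge(2C_{obs}T)^{-1}>0$, uniformly in $n$.

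For $(ii)\Rightarrow(i)$, the strategy would be to partition $\{\lambda_k\}_{k\in\N^+}$ into finite consecutive packets $\{B_m\}_m$ of diameter at most $\varepsilon$, so that each $B_m$ is contained in some cluster $J^\varepsilon(\lambda_{n_m})$. By Cauchy interlacing, (ii) implies that each principal Gram submatrix $(a_{k,j})_{k,j\in B_m}$ has smallest eigenvalue $\ge\delta$, hence $\int_E\bigl|\sum_{k\in B_m}c_k\varphi_k\bigr|^2\d x\ge\delta\sum_{k\in B_m}|c_k|^2$. Decomposing $u(t)=\sum_m u^{(m)}(t)$ with $u^{(m)}:=\sum_{k\in B_m}c_k e^{-\i\lambda_k t}\varphi_k$, a block Ingham / RTTT-type inequality applied to the packet representatives $\{\lambda_{n_m}\}$ would yield, for $T$ sufficiently large,
\begin{equation*}
\int_0^T\|u(t)\|_{L^2(E)}^2\,\d t\ \gtrsim\ T\sum_m\|u_0^{(m)}\|_{L^2(E)}^2\ \gtrsim\ T\delta\,\|u_0\|_{L^2(\R)}^2.
\end{equation*}

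The main obstacle lies in this converse direction. Because $\lambda_{k+1}-\lambda_k\sim\lambda_k^{-1/2}\to 0$, no uniform inter-packet frequency gap $\gamma>0$ is achievable through any consecutive partition, so the classical Ingham inequality cannot be applied to the packet representatives at a fixed time. One must therefore invoke a cluster-adapted abstract observability theorem permitting overlapping or variable-spacing frequency groupings, or alternatively glue low- and high-frequency estimates as in the Bourgain-Burq-Zworski compactness-uniqueness argument employed in Subsection \ref{sec3.3}. Controlling the growth of the packet cardinality $\sharp B_m\sim\varepsilon\lambda_{n_m}^{1/2}$ against the off-diagonal interactions of the Gramian is the technical core of this step.
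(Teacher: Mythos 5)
Your forward direction $(i)\Rightarrow(ii)$ is correct and is a perfectly good elementary argument: testing with a wave packet $u_0=\sum_{k\in J^\varepsilon(\lambda_n)}c_k\varphi_k$ and using $|\lambda_k-\lambda_n|<\varepsilon$ to freeze the phases gives the uniform lower bound on $\langle A_{E,n}c,c\rangle$ (your error bound drops the quadratic term $\varepsilon^2t^2\|c\|^2$, but that is harmless). The genuine gap is the converse $(ii)\Rightarrow(i)$, which you do not prove. The packet strategy you outline cannot be completed, for exactly the reason you identify: since $\lambda_{k+1}-\lambda_k\sim\lambda_k^{-1/2}\to 0$, any partition of the spectrum into consecutive packets of diameter $\le\varepsilon$ has inter-packet gaps tending to zero, so no vector-valued Ingham/Haraux-type inequality with a fixed gap applies to the packet representatives; and the ``cluster-adapted abstract observability theorem'' you then appeal to is precisely the missing ingredient — it is neither stated nor proved in your proposal. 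As written, only one implication of the equivalence is established.

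The paper closes this direction in one stroke: since $H=-\frac{\d^2}{\d x^2}+|x|$ has compact resolvent (\cite[Proposition 2.1]{Ga}) and the observation operator $u\mapsto \mathbf{1}_E u$ is bounded on $L^2(\R)$, the wave-packet criterion of Ramdani--Takahashi--Tenenbaum--Tucsnak \cite[Theorem 1.3]{RTTT} applies and asserts that observability at \emph{some} time is equivalent to the cluster inequality \eqref{equ3.2} — with overlapping clusters centered at every $\lambda_n$ and no gap hypothesis on the spectrum — after which the matrix formulation \eqref{equ3.4} is just the Rayleigh-quotient identification of the Gram matrix $A_{E,n}$ (positive semi-definite since $X^HA_{E,n}X=\int_E|\sum c_k\varphi_k|^2\d x$), a step you essentially have. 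If you prefer not to cite \cite{RTTT} as a black box, the concrete fix is the route the paper itself uses elsewhere: condition $(ii)$ yields, exactly as in the proof of Lemma \ref{lem-92-resolve}, a uniform Hautus-type resolvent estimate $\|u\|_{L^2(\R)}\le C\big(\|(H-\lambda)u\|_{L^2(\R)}+\|u\|_{L^2(E)}\big)$ for all real $\lambda$ (project onto the cluster $\{k:|\lambda_k-\lambda|<\varepsilon/2\}$, control the projected part by $(ii)$ and the complementary part by the spectral gap to $\lambda$), and then the abstract resolvent-to-observability theorems of Burq--Zworski \cite{BZ2004} or Miller \cite{Miller05} give observability at some sufficiently large time. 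Either way, that abstract input — not an Ingham inequality on separated packets — is what must replace the core of your Step $(ii)\Rightarrow(i)$.
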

\begin{proof}
	By \cite[Proposition 2.1]{Ga},  the resolvent of $H=-\frac{\d^2}{\d x^2}+|x|$  is compact. Then  by \cite[Theorem 1.3]{RTTT},  the conclusion $(i)$ holds if and only if
	there exists  $\varepsilon>0$ and $\delta>0$ such that for each $n\ge 1$,
	\begin{align}\label{equ3.2}
		\int_{E}\left|\sum_{k\in J^\varepsilon(\lambda_n)}c_k\varphi_k(x)\right|^2\d x \geq \delta\cdot\sum_{k\in J^\varepsilon(\lambda_n)}|c_k|^2\;\;\mbox{for all}\;\; c_k\in \mathbb{C}.
	\end{align}
	We arbitrarily fix each $n\in\N^+$. Since
	\begin{equation}\label{equ3.4.1}
		0\leq \int_{E}\left|\sum_{k\in J^\varepsilon(\lambda_n)}c_k\varphi_k(x)\right|^2\d x=X^{H}A_{E, n}X
		\;\;\mbox{for each}\;\;X=(c_1,\cdots,c_{K_n})^{T}\in \mathbb{C}^{K_n},
	\end{equation}
	$A_{E, n}$  is positive semi-definite. 
	Moreover, it follows from the Rayleigh theorem that 
	$$
	\min_{X\ne 0}\frac{X^{H}A_{E, n}X}{X^{H}X}=\lambda_{1}(A_{E, n}).
	$$
	Therefore \eqref{equ3.2} holds if and only if \eqref{equ3.4} holds for some $\delta>0$ independent of $n$. This shows that \noindent $(i)$ and \noindent $(ii)$ are equivalent and completes the proof.
\end{proof}
\begin{remark}\label{rmk4.1}
	Inequality \eqref{equ3.2} coincides with \eqref{equ-92-0} for $\alpha=\frac12$.  Therefore, by Proposition \ref{lem-92} (with $\alpha=\frac12$) and Proposition \ref{prop3.1}, we see that if $E^c$ is $\alpha$-thin with $\alpha= \frac12$, then  $E$  is an \emph{observable set} at some time for \eqref{equ1.1}.
\end{remark}

\subsection{Proof Theorem \ref{thm1.2}}\label{sec4.1}
We first prove the conclusion  \noindent $(i)$. Suppose that $E\subset \mathbb{R}$  is \emph{an observable set at some time} for \eqref{equ1.1}. Then it follows from Proposition \ref{prop3.1} that there exists an absolute constant $\delta>0$ such that $\int_E|\varphi_k(x)|^2\d x \geq \delta$  for all $k$.
This, along with Lemma \ref{lem-low-h}, shows that $E$ is \emph{weakly thick}.

Next, we will prove the conclusion   \noindent $(ii)$. We prove it specifically for the case where $E=(0,\infty)$, and note that the proof for the case where $E=(-\infty,0)$ can be carried out in a similar manner.
For this purpose, we will prove that corresponding to $E=(0,+\infty)$, the sequence
$\{A_{E,n}\}_{n=1}^{+\infty}$ does not satisfy \eqref{equ3.4}.

To this end, we arbitrarily fix $\varepsilon>0$ and $n$ sufficiently large. By \eqref{equ3.1}, we have
\begin{align}\label{equ-C-0}
	J^\varepsilon(\lambda_n)=\{n_0+1,\cdots, n_0+K_n\},
\end{align}
while by \eqref{equ-92-3.1} (with $\alpha=\frac12$) and \eqref{equ2.3}, we see 
\begin{align}\label{equ-C-0.1}
	K_n\approx \lambda_n^{\frac12}\approx n^{\frac13}\rightarrow \infty, \;\;\mbox{as}\;\;n\rightarrow \infty.
\end{align}

Without loss of generality, we can assume that $n_0$ is an even integer. 
Otherwise, when $n_0$  is odd, we remove
the first  row and column, then the smallest eigenvalue
of the remiander does not decrease by the Rayleigh theorem.

Now we shall study the asymptotic behavior of the smallest eigenvalue of $A_{E, n}$. 
We simply write
\begin{align}\label{equ-C-0.2}
	a_{kj}=\int_{0}^{\infty}{\varphi_{n_0+k}(x)\cdot\varphi_{n_0+j}(x)\,\mathrm dx},\;\; 1\leq k,j\leq K_n.
\end{align}
First, we note that each $\varphi_k$ is an odd or even function. Thus, 
\begin{align}\label{equ-C-1}
	a_{kk}=\int_0^{\infty}{|\varphi_{n_0+k}(x)|^2\,\mathrm dx}=\frac12 \;\;\mbox{for all}\;\;k\in \mathbb{N}.
\end{align}
Next, we will derive the expression of each $a_{kj}$ for $k\ne j$. Since $A_{E, n}$ is symmetric,  we only need to consider the case
$k>j$. 
Several facts are as follows: 

\noindent \emph{Fact one:} By \eqref{equ-C-0.2}, \eqref{equ2.5} and \eqref{equ2.13}, we have
\begin{align}\label{equ-C-2}
	a_{kj}&=A_{n_0+k}A_{n_0+j}\int_{0}^{\infty}{Ai(x-\lambda_{n_0+k})\cdot Ai(x-\lambda_{n_0+j})\,\mathrm dx}\nonumber\\
	&=A_{n_0+k}A_{n_0+j}\cdot \frac{Ai(-\lambda_{n_0+k})Ai'(-\lambda_{n_0+j})-Ai(-\lambda_{n_0+j})Ai'(-\lambda_{n_0+k})}{\lambda_{n_0+j}-\lambda_{n_0+k}} .
\end{align}
This, together with \eqref{equ2.2}, implies that
\begin{align}\label{equ-C-2.1}
	a_{kj}=0,\quad\,\,\,\text{if}\,\, k-j \,\,\text{is even}.
\end{align}

\noindent \emph{Fact two:} By \eqref{equ2.6}, \eqref{equ2.11} and the assumption that $n_0$ is even, we have
\begin{equation}\label{equ-C-3}
	A_{n_0+k}=\begin{cases}
		\frac{1}{\sqrt{2\lambda_{n_0+k}}}\cdot |Ai^{-1}(-\lambda_{n_0+k})|,\quad\,\,\,\text{if}\,\, k \,\,\text{is even},\\
		\frac{1}{\sqrt{2}}\cdot |Ai'(-\lambda_{n_0+k})|^{-1},\quad\quad\,\,\,\,\,\,\text{if} \,\,k\,\, \text{is odd}.
	\end{cases}
\end{equation}

\noindent \emph{Fact three:} By \eqref{equ2.4} we have
$$
\frac{\pi}{2}\cdot(k-j)\cdot\lambda^{-\frac12}_{n_0+k}\leq \lambda_{n_0+k}-\lambda_{n_0+j}\leq \frac{\pi}{2}\cdot (k-j)\cdot\lambda^{-\frac12}_{n_0+j}.
$$
This, together with \eqref{equ-C-0}, yields 
\begin{align}\label{equ-C-4}
	\frac{1}{\lambda_{n_0+k}-\lambda_{n_0+j}}=\frac{2}{\pi}\cdot\frac{1}{k-j}\cdot\lambda_n^{\frac12}+O(\lambda_n^{-1}).
\end{align}

Now it follows from \eqref{equ-C-2}--\eqref{equ-C-4} that when $1\le j<k\le K_n$, one has
\begin{equation}\label{equ-C-5}
	a_{kj}=\begin{cases}
		\frac{-1}{\pi(k-j)}\cdot \mbox{sgn}{Ai(-\lambda_{n_0+k})} \cdot \mbox{sgn}{Ai'(-\lambda_{n_0+j})}+O(\lambda_n^{-1}),\,\,\,\,\,\,\,k\,\,\text{ even},\,\,  j\,\, \text{ odd},\\
		\frac{1}{\pi(k-j)}\cdot \mbox{sgn}{Ai'(-\lambda_{n_0+k})}\cdot \mbox{sgn}{Ai(-\lambda_{n_0+j})} +O(\lambda_n^{-1}),\,\,\,\,\,\,\,k\,\, \text{odd},\,\, \,\, j\,\, \text{even},\\
		0, \qquad \qquad \qquad\qquad\qquad\qquad\qquad\qquad\qquad\quad\,\,\,\, k-j \,\,\text{is even}.
	\end{cases}
\end{equation}
where $\mbox{sgn}{f}$ denotes the sign of $f$, for which, we
claim that for any $k\in\mathbb{N}$, one has
\begin{equation}\label{equ-C-6}
	\begin{cases}
		{Ai(-\lambda_{2k})} \cdot {Ai'(-\lambda_{2k+1})} >0,\\
		{Ai'(-\lambda_{2k-1})} \cdot {Ai(-\lambda_{2k})} <0.
	\end{cases}
\end{equation}
In fact, \eqref{equ-C-6} can be verified by the following two cases:

\noindent $(i)$ In the case  $Ai(-\lambda_{2k})<0$, 
recall that the Airy function satisfies the differential equation $Ai''(x)=xAi(x)$, thus we have $Ai''(-\lambda_{2k})>0$. This shows that  $-\lambda_{2k}$ represents a local minimum for $Ai$ near $x=-\lambda_{2k}$. Then there exists a small $\varepsilon>0$, such that $Ai'(x)<0$, when $-\lambda_{2k}-\varepsilon<x<-\lambda_{2k}$ and  $Ai'(x)>0$, when $-\lambda_{2k}<x<-\lambda_{2k}+\varepsilon$.
We can further prove that $Ai'(x)<0$, if $x\in (-\lambda_{2k+2}, -\lambda_{2k})$ and $Ai'(x)>0$, if $x\in (-\lambda_{2k}, -\lambda_{2k-2})$.
In fact, it suffices to prove that the function $Ai'(x)$ does not  change sign within the intervals  $(-\lambda_{2k+2}, -\lambda_{2k})$ and $(-\lambda_{2k}, -\lambda_{2k-2})$.  Suppose, for contradiction,  there exists some $-\lambda_{2k+2}<\mu<-\lambda_{2k}$ or $-\lambda_{2k}<\mu<-\lambda_{2k-2}$ such that $Ai'(-\mu)=0$. Then  \eqref{equ2.2} indicates that $\mu=\lambda_{2k}$ for some $k\in \N^+$, which  is a contradiction.
These imply 
$$
Ai'(-\lambda_{2k+1})<0,\,\,\,\,{Ai'(-\lambda_{2k-1})} >0,
$$
which  implies \eqref{equ-C-6} for this case. 

\noindent $(ii)$ In the case that $Ai(-\lambda_{2k})>0$,   it follows that $-\lambda_{2k}$ represents a local maximum for $Ai$ near $x=-\lambda_{2k}$.
We can use the same approach to that used in $(i)$ to prove that 
$$
Ai'(-\lambda_{2k+1})>0,\,\,\,\,{Ai'(-\lambda_{2k-1})} <0,
$$
which in turn implies \eqref{equ-C-6}.

Now, by  \eqref{equ-C-6} and the assumption that $n_0$ is even, we obtain 
\begin{equation}\label{equ-C-6-1212}
	\begin{cases}
		\mbox{sgn}{Ai(-\lambda_{n_0+k})} \cdot \mbox{sgn}{Ai'(-\lambda_{n_0+j})}&=(-1)^{\frac{k-j+1}{2}}, \,\,\,\,\,\mbox{if}\,\,k\,\text{is even and}\,  j\, \text{is odd},\\
		\mbox{sgn}{Ai'(-\lambda_{n_0+k})}\cdot \mbox{sgn}{Ai(-\lambda_{n_0+j})}&=(-1)^{\frac{k-j-1}{2}}, \,\,\,\,\,\mbox{if}\,\,k\,\text{is odd and}\,  j\, \text{is even}.
	\end{cases}
\end{equation}
Thanks to  \eqref{equ-C-5} and \eqref{equ-C-6-1212}, we have the decomposition
\begin{equation}\label{equ-C-7}
	A_{E, n}=T_{n}+R_{n},
\end{equation}
where  $T_{n}:=(t_{k,j})_{K_n\times K_n}$ is a real symmetric  matrix, with
\begin{equation}\label{equ-C-8}
		t_{kk}=\frac{1}{2}\,\,\,\,\quad\mbox{and}\quad\,\,\,\, t_{kj}=\frac{\sin{\frac{k-j}{2}}\pi}{\pi(k-j)}\,\,\,\,\,\mbox{if}\,\,\, k>j;
	\end{equation}
	and where 
	$R_{n}=\left(r_{ij}\right)_{K_n\times K_n}$ is  a symmetric matrix, with
	\begin{equation}\label{equ-C-9}
		r_{kk}=0,\qquad\,
		|r_{kj}|\leq C\cdot \lambda_n^{-1},\,\,\,\,\,\mbox{if}\,\, k\ne j,
	\end{equation}
	for some  $C>0$  independent of $n, k, j$.
	
	We view $A_{E, n}$ as a small perturbation of $T_{n}$. Our  \textbf{key observation} is that
	$T_n$ is a symmetric Toeplitz matrix:
	\begin{equation}\label{equ-C-10}
		T_n=\frac{1}{\pi}\begin{bmatrix}
			\frac{\pi}{2} & 1  & 0 & \ldots &\frac{\sin{\frac{(K_n-1)\pi}{2}}}{K_n-1} \\\
			1  & \frac{\pi}{2} & 1 & \ddots & \vdots \\\
			0 & 1  & \ddots & \ddots & 0 \\\
			\vdots & \ddots & \ddots  & \frac{\pi}{2} &1 \\\
			\frac{\sin{\frac{(K_n-1)\pi}{2}}}{K_n-1} & \ldots  &0 & 1  & \frac{\pi}{2}
		\end{bmatrix}.
	\end{equation}
	Now we use \eqref{equ-C-8} to determine  the symbol $f(x)$ associated with the matrix $T_n$ (see Section \ref{sec2.2}). 
	More precisely, we consider the infinite series
	$$
	F(z):=\sum_{j=-\infty}^{\infty}a_j\cdot z^j,\qquad z=re^{\i\theta}, \,\,r\in [0, 1),\,\,\theta\in[-\pi, \pi),
	$$
	where $a_j:=t_{i,i+j}$ for $j=1,\dots$ and $a_{-j}=a_j$. Note that the definition of $a_j$
	is independent of $i$ (see \eqref{equ-C-8}). Then by \eqref{equ-C-8}, we find
	\begin{align*}\label{equ-C-11}
		F(z)=\frac{\pi}{2}+\sum_{j=0}^{\infty}{\frac{(-1)^{j}}{2j+1}\cdot\left(z^{2j+1}+z^{-(2j+1)}\right)}
		=\frac{\pi}{2}+\arctan{z}+\arctan{\frac{1}{z}}.
	\end{align*}
	Since
	\begin{equation}\label{equ-C-12}
		\arctan{z}+\arctan{\frac{1}{z}}=\begin{cases}
			\frac{\pi}{2},\quad\quad\,\,\,\,\,\,\,\mbox{if}\,\, \Re z>0,\\
			-\frac{\pi}{2},\quad\quad\,\,\,\mbox{if}\,\, \Re z<0,
		\end{cases}
	\end{equation}
	we have
	\begin{equation}\label{equ-C-13}
		f(\theta)=\lim_{r\rightarrow 1}F(re^{\i\theta})=\begin{cases}
			\pi,\quad\quad\,\,\,\,\,\,\,\mbox{if}\,\,\,\,\theta\in(-\frac{\pi}{2}, \frac{\pi}{2}),\\
			0,\quad\quad\,\,\,\,\,\,\,\mbox{if}\,\,\,\, \theta\in(-\pi, -\frac{\pi}{2})\cup(\frac{\pi}{2}, \pi),
		\end{cases}
	\end{equation}
	However,  it is straightforward to verify  that  if $f(\theta)$ is defined through \eqref{equ-C-13}, then \eqref{equ-C-8}  is exactly the Fourier coefficients of $f$ with $a_{k-j}=t_{kj}$.
	
	\begin{figure}
		\begin{tikzpicture}[scale=0.8]
			\draw[->](-3.4,0)--(3.4,0)node[right,below,font=\tiny]{$\theta$};
			\draw[->](0,-0.2)--(0,2.5)node[right,font=\tiny]{$y$};
			\coordinate (a0) at (-1.5,0);
			\node[below] at (a0) {$-\frac{\pi}{2}$};
			\coordinate (b0) at (1.5,0);
			\node[below] at (b0) {$\frac{\pi}{2}$};
			\node[above=19.2mm,font=\tiny]  at (b0) {$f(\theta)$};
			\coordinate (c0) at (0,1.5);
			\node[above=2.5mm,left=0.02mm] at (c0) {$\frac{\pi}{2}$};
			\coordinate (d0) at (3.1,0);
			\node[below] at (d0) {$\pi$};
			\coordinate (e0) at (-3.1,0);
			\node[below] at (e0) {-$\pi$};
			\draw[color=red, thick,smooth,domain=-3.1:-1.5]plot(\x,0);
			\draw[color=red, thick,smooth,domain=1.5:3.1]plot(\x,0);
			\draw[color=blue, thick,smooth,domain=-1.5:1.5]plot(\x,1.5);
			\draw[color=blue,smooth](0, 1.5)circle(0.03);
			\draw[color=red,smooth](-1.5, 0)circle(0.03);
			\draw[color=red,smooth](1.5, 0)circle(0.03);
			\draw[dashed](-1.5,0) -- (-1.5,1.5);
			\draw[dashed](1.5,0) -- (1.5,1.5);
		\end{tikzpicture}
		\caption{The symbol $f(\theta)$}     \label{fig1}
	\end{figure}
	
	Let $\lambda_{1}(T_n)$ be the smallest eigenvalue of the matrix $T_n$, as defined in \eqref{equ-C-8}. Then by Lemma \ref{lem2.2} and  
	\eqref{equ-C-13}, we have
	\begin{align}\label{equ-C-14}
		\lambda_{1}(T_n)\longrightarrow \inf_{x\in(-\pi,\, \pi)} f(\theta)=0,\quad\,\,\,\mbox{as}\,\, n\rightarrow\infty.
	\end{align}
	
	Next, we consider $R_n$. Let its eigenvalues be
	$$
	\lambda_1(R_n)\leq \cdots\leq \lambda_n(R_n).
	$$
	Since $\lambda_n(R_n)\leq |||R_n|||_{1}:=\max_{j}\sum_{k=1}^{K_n}{|r_{kj}|}$,
	it follows from  \eqref{equ-C-9} and \eqref{equ-C-0.1}  that
	\begin{align}\label{equ-C-15}
		\lambda_n(R_n)\leq K_n\max_{k,j}|r_{kj}|
		\leq C\lambda_n^{\frac12}\cdot \lambda_n^{-1}\leq Cn^{-\frac13}\longrightarrow 0, \quad\,\,\,\mbox{as}\,\, n\rightarrow\infty.
	\end{align}
	Therefore, we obtain that
	\begin{align}\label{equ-C-16}
		0&\leq \lambda_1(A_{E, n})\leq  \lambda_1(T_n)+ \lambda_n(R_n)\longrightarrow 0, \quad\,\,\,\mbox{as}\,\, n\rightarrow\infty,
	\end{align}
	where in the first inequality above, we used the fact that $A_{E, n}$ is non-negative; in the second inequality, we used \eqref{equ-C-7} and applied Weyl's theorem  for the sum of two Hermitian matrices (see e.g. in \cite[p. 238]{HJ}); while in the last step, we used \eqref{equ-C-14} and  \eqref{equ-C-15}.
	
	Finally, by \eqref{equ-C-16}, it follows that \eqref{equ3.4} cannot hold for a uniform constant $\delta>0$. Therefore,  Proposition \ref{prop3.1} indicates that
	for any $T>0$, 
	$E:=(0, \infty)$ is not an \emph{observable set} for \eqref{equ1.1} at $T$. This finishes the proof of the statement $(ii)$ of Theorem \ref{thm1.2}.\qed

		\begin{appendix}
			\renewcommand{\appendixname}{Appendix\,\,}
			\section{Proof of Lemma \ref{lem-low-h}}\label{app-001}

			
			The idea of the proof is taken from  \cite{HWW}. We provide the  proof here for the reader's convenience.
			
			First, we show that $(i)\Rightarrow (ii)$. 
			Suppose that $E$ is \emph{weakly thick}. We will prove $(ii)$ (i.e., \eqref{low-h}) by the following two steps.

			\noindent{\it Step 1. Let $S_k(x):=\frac23(\lambda_k-x)^{\frac32}+\frac{\pi}{4},\;x\in\R$, and let
				\begin{align}\label{equ3.18.1}
					F_{k,\varepsilon}: = \left\{|x|<\frac{\lambda_k}{2}:\,\,  \sin^2{S_k(x)}  \leq \varepsilon  \right\}.
				\end{align}
				We claim that there is  a constant $C_1>0$, independent of $\varepsilon$ and $k$, so that
				\begin{align}\label{equ3-1003-3}
					|F_{k,\varepsilon}|\leq  C_1\sqrt{\varepsilon}\lambda_k\;\;\mbox{for all}\;\;k\geq 1\;\;\mbox{and}\;\; \varepsilon\in (0,1).
			\end{align}}
			For this purpose, we  arbitrarily fix $k\geq 1$ and $\varepsilon\in (0,1)$. Let
			\begin{align}\label{equ3-1003-4}
				J:= \left\{ j\in \mathbb{N}: S_k(\frac{\lambda_k}{2})-\pi\leq j\pi\leq S(0)+\pi\right\}.
			\end{align}
			Several observations are given in order. First, from  the expression of $S_k$,
			we obtain that
			\begin{align}\label{equ-1020-7}
				\sharp J \leq \frac{S_k(0)-S_k(\frac{\lambda_k}{2})}{\pi}+2\leq \frac{1}{3\sqrt{2}}\lambda_k^{\frac{3}{2}}+2.
			\end{align}
			Second, by \eqref{equ3.18.1}, and \eqref{equ3-1003-4},  we have
			\begin{align}\label{equ-1021-1}
				F_{k,\varepsilon}\subset\bigcup_{j\in J'}\left\{|x|<\frac{\lambda_k}{2}:\,\,\,j\pi-\arcsin{\sqrt{\varepsilon}}\leq S_k(x)\leq j\pi+\arcsin{\sqrt{\varepsilon}}\right\}.
			\end{align}
			Third, there is  $C_2>0$ (independent of $k$ and $\varepsilon$) so that when $j\in J$,
			\begin{align}\label{equ3.18.4}
				&\left|\left\{x|<\frac{\lambda_k}{2}:\,\,\,j\pi-\arcsin{\sqrt{\varepsilon}}\leq S_k(x)\leq j\pi+\arcsin{\sqrt{\varepsilon}}\right\}\right|\nonumber\\
				&=2\left(S_k^{-1}(j\pi+\arcsin{\sqrt{\varepsilon}})-S_k^{-1}(j\pi-\arcsin{\sqrt{\varepsilon}})\right)\nonumber\\
				&\leq 4(\arcsin{\sqrt{\varepsilon}})\cdot\sup_{x\in (0, \frac{\lambda_k}{2})}\frac{1}{\sqrt{\lambda_k-x}}\nonumber\\
				&\leq C_2 \sqrt{\varepsilon}  \lambda_k^{-\frac{1}{2}}.
			\end{align}
			In \eqref{equ3.18.4}, for the first equality, Line 2, we used the fact that $S_k(\cdot)$ is continuous and  strictly decreasing on $(0, \frac{\lambda_k}{2})$  and the fact
			that $S_k(x)= S_k(-x)$ with $x\in\R$; for the first inequality, Line 3, we used the rule of the derivative of inverse function and the fact that
			$S_k'(x)=\sqrt{\lambda_k-x}$, with $x\in\R$; for the last inequality, Line 4, we used  the facts
			that $x\in (0, \frac{\lambda_k}{2})$ and  $\arcsin{\sqrt{\varepsilon}}\sim \sqrt{\varepsilon}$.

			According to  \eqref{equ-1020-7}-\eqref{equ3.18.4}, there is $C_1>0$ (independent of $k$ and $\varepsilon$) so that
			$$
			|F_{k,\varepsilon}|\leq  \sharp J  C_2 \sqrt{\varepsilon}  \lambda_k^{-\frac{1}{2}}\leq C_1\sqrt{\varepsilon} \lambda_k\;\;\mbox{for all}\;\;k\geq 1\;\;\mbox{and}\;\; \varepsilon\in (0,1),
			$$
			which leads to \eqref{equ3-1003-3}.

			\noindent {\it Step 2. We prove \eqref{low-h}.}
			
			Several observations are given in order. First, since  $E$ is \emph{weakly thick}, there is $\gamma\in (0, 2]$ and $L>0$ so that
			\begin{align}\label{equ-2-23-1}
				\frac{|E\bigcap [-x,x]|}{x} \geq \gamma\;\;\mbox{for all}\;\;x\geq L.
			\end{align}
			Meanwhile, it follows from \eqref{equ2.1} that there exists  $k_1\in \mathbb{N}$ such that $\lambda_k\ge 2L$ for all $k\ge k_1$. This, along with \eqref{equ-2-23-1}, yields
			\begin{align}\label{equ-1026-0}
				\left|E\bigcap \Big[-\frac{\lambda_k}{2}, \frac{\lambda_k}{2}\Big]\right|\ge \frac{\gamma}{2} \lambda_k,\;\;\mbox{when}\;\; k\ge k_1.
			\end{align}
			Write
			\begin{eqnarray*}\label{equ-1026-1}
				\varepsilon_0:= \Big(\frac{1}{1+ {4C_1}/{\gamma}}\Big)^2 \in (0,1).
			\end{eqnarray*}
			Then it follows from  \eqref{equ3-1003-3} that
			\begin{align}\label{equ-1026-2}
				|F_{k,\varepsilon_0}|\leq C_1\sqrt{\varepsilon_0}\lambda_k\leq \frac{\gamma}4\lambda_k
				\;\;\mbox{for all}\; k\geq  k_1.
			\end{align}
			Combining \eqref{equ-1026-0} and \eqref{equ-1026-2} leads to
			\begin{align}\label{equ3.18.6}
				\left|\Big(E\bigcap \Big[-\frac{\lambda_k}{2}, \frac{\lambda_k}{2}\Big]\Big)\setminus F_{k,\varepsilon_0}\right|\geq \frac{\gamma}{4}\lambda_k,\;\;\mbox{when}\;\; k\geq k_1.
			\end{align}
			Second,  by \eqref{equ3.18.1} (where $\varepsilon=\varepsilon_0$), we have that
			when $k\geq  k_1$,
			\begin{align}\label{equ3.18.7}
				\sin^2{S_k(x)} \geq \varepsilon_0\;\;\mbox{for all}\;\;x\in \Big(E\bigcap \Big[-\frac{\lambda_k}{2}, \frac{\lambda_k}{2}\Big]\Big)\setminus F_{k,\varepsilon_0}.
			\end{align}
			Third,   using  \eqref{equ2.8}, we obtain that
			\begin{multline}\label{equ3.22}
				\int_{E}{|\varphi_k(x)|^2\,\mathrm dx}\ge\int_{E\cap  [-\frac{\lambda_k}{2}, \frac{\lambda_k}{2}]}{|\varphi_k(x)|^2\,\mathrm dx}\\
				\ge \frac{a_k^2}{2}\int_{E\cap [-\frac{\lambda_k}{2}, \frac{\lambda_k}{2}]}{(\lambda_k-|x|)^{-\frac{1}{2}}\sin^2{S_k(x)}\,\mathrm dx}-3a_k^2\int_{E\cap [-\frac{\lambda_k}{2}, \frac{\lambda_k}{2}]}{(\lambda_k-|x|)^{-\frac12}\cdot\,|R_k(x)|^2\mathrm dx}.
			\end{multline}

			Next, we will estimate two terms on the right hand side of \eqref{equ3.22}, with the aid of
			Lemma \ref{lem2.1} and the first two facts above-mentioned.
			First,
			it follows from
			\eqref{equ3.18.7}, \eqref{equ3.18.6} and  \eqref{equ2.9} that when $k\geq k_1$,
			\begin{align}\label{equ3.18.8}
				a_k^2\cdot\int_{E\cap  [-\frac{\lambda_k}{2}, \frac{\lambda_k}{2}]}{(\lambda_k-|x|)^{-\frac{1}{2}}\sin^2{S_k(x)}\,\mathrm dx}
				&\ge a_k^2\cdot\varepsilon_0\cdot\int_{(E\bigcap [-\frac{\lambda_k}{2}, \frac{\lambda_k}{2}])\setminus F_{\varepsilon_0}}{(\lambda_k-|x|)^{-\frac12}\,\mathrm dx}\nonumber\\
				&\ge  a_k^2\cdot\varepsilon_0\cdot\lambda_k^{-\frac12}\cdot\left|(E\bigcap I_k^\delta)\setminus F_{k,\varepsilon_0}\right|\nonumber\\
				&\ge C_3\varepsilon_0\gamma
			\end{align}
			for some $C_3>0$ (independent of $k$). Second,
			it follows from \eqref{equ2.9} that
			\begin{equation}\label{equ3.22.1}
				a_k^2\cdot\int_{E\cap [-\frac{\lambda_k}{2}, \frac{\lambda_k}{2}]}{(\lambda_k-|x|)^{-\frac12}\cdot\,|R_k(x)|^2\mathrm dx} \leq Ca_k^2\cdot \lambda_k\cdot(\frac{\lambda_k}{2})^{-\frac12}\cdot\lambda_k^{-3}
				\leq C_4\lambda_k^{-3}
			\end{equation}
			for some $C_4>0$ (independent of $k$).
			
			Finally, inserting \eqref{equ3.18.8} and \eqref{equ3.22.1} into \eqref{equ3.22}, we find
			that when $k\geq  k_1$,
			\begin{align}\label{equ-1026-4}
				\int_{E}{|\varphi_k(x)|^2\,\mathrm dx}\geq \frac{1}{2}C_3\varepsilon_0\gamma- 3C_4\lambda_k^{-3}.
			\end{align}
			Since $\lambda_k^{-1}\rightarrow 0$ as $k\rightarrow \infty$, we can find $k_2\geq   k_1$ so that
			when $k\geq k_2$,
			\begin{eqnarray*}
				\frac{1}{2}C_3\varepsilon_0\gamma - 3C_4\lambda_k^{-3} \geq \frac{1}{4}C_3\varepsilon_0\gamma,
			\end{eqnarray*}
			which, together with \eqref{equ-1026-4}, leads to \eqref{low-h}.
			
			Next, we show that $(ii)\Rightarrow (i)$.
			Suppose that $E$ is \emph{an observable set at some time $T_0>0$} for \eqref{equ1.1}.
			Then there is  $C_0=C_0(T_0,E)>0$ so that
			\begin{align}\label{equ-10-5-1709}
				\|u_0\|_{L^2}^2\leq C_0\int_0^{T_0}\int_{E}{|e^{-\i tH}u_0|^2\d x\d t}\;\;\mbox{for all}\;\;u_0\in L^2(\mathbb{R}).
			\end{align}
			Let $u_0=\varphi_k$, where $\varphi_k$ is
			the $L^2$ normalized eigenfunction of $H$. Then 
			\begin{align}\label{equ-10-5-1710}
				\int_{E}{|\varphi_k|^2 \d x}\ge C_1:=1/(T_0C_0)  \;\;\mbox{ for all }\;\; k\in \mathbb{N}^+.
			\end{align}
			
			
			In order to apply  Lemma \ref{lem2.1}, we make  the following decomposition:
			\begin{eqnarray}\label{equ3.62}
				\int_{E}|{\varphi_k|^2\,\d x}=I_1+I_2+I_3,
			\end{eqnarray}
			where 
			\begin{align*}
				I_1&:= \int_{E\bigcap\big\{x:|x|< \lambda_k-\delta\big\}}|{\varphi_k(x)|^2\,\mathrm dx}{,}\qquad \,\,I_2:=\int_{E\bigcap\big\{x:\lambda_k-\delta\leq|x|\leq \lambda_k+\delta\big\}}|{\varphi_k(x)|^2\,\mathrm dx}{,}\\
				I_3&:=\int_{E\bigcap\big\{x:|x|>\lambda_k+\delta\big\}}|{\varphi_k(x)|^2\,\mathrm dx}.
			\end{align*}

			To deal with the term $I_1$, we observe that by \eqref{equ2.9}, there is
			$C>0$ (independent of $k$ and $x$) so that
			\begin{align}\label{eq-reminder-es}
				|R_k(x)|\leq  C,
			\end{align}
			when $x$ satisfies either $|x|<\lambda_k-\delta$ or $|x|>\lambda_k+\delta$.
			Now, we write
			\begin{align}\label{equ-10-5-1711}
				I_1=  \int_{{E\bigcap\Big\{x:|x|< \rho\lambda_k\Big\}}}{|\varphi_k|^2\,\mathrm dx}+ \int_{{E\bigcap\Big\{x:\rho\lambda_k<|x|< \lambda_k-\delta\Big\}}}{|\varphi_k|^2\,\mathrm dx}
				:= I_{1,1}+I_{1,2},
			\end{align}
			where $\rho\in(0, 1)$ is some constant to be chosen later.  Notice that for any given $\rho\in(0, 1)$, we have
			$$
			\rho\lambda_k<\lambda_k-\delta,\,\,\,\text{as }\,\,k\to \infty.
			$$
			Thus we can use \eqref{eq-reminder-es}, as well as Lemma \ref{lem2.1}, to find   $k_3\in\mathbb{N}^+$ so that when $k\geq k_3$,
			\begin{align}\label{equ-10-5-1712}
				I_{1,1}\leq  C_2 \int_{E\bigcap\{x:|x|< \rho\lambda_k\}}{\lambda_k^{-\frac12}(\lambda_k-|x|)^{-\frac12}\,\mathrm dx}
				\leq C_2(1-\rho)^{-\frac12}\frac{|E\bigcap\{x:|x|< \rho\lambda_k\}|}{\lambda_k},
			\end{align}
			and
			\begin{align}\label{equ-10-5-1713}
				I_{1,2}\leq  C_3 \int_{{\Big\{x:\rho\lambda_k<|x|< \lambda_k-\delta\Big\}}}{\lambda_k^{-\frac12}(\lambda_k-|x|)^{-\frac12}\,\mathrm dx}
				\leq C_3\int_{\rho}^1{(1-|x|)^{-\frac12}\,\mathrm dx},
			\end{align}
			where $C_2, C_3>0$ are two absolute constants. Since $\int_{\rho}^1{(1-|x|)^{-\frac12}\,\mathrm dx}\rightarrow 0$ as $\rho\rightarrow 1^{-}$,  we can choose $\rho=\rho_0\in (0, 1)$ so that
			\begin{align}\label{equ-10-5-1714}
				\int_{\rho_0}^1{(1-x)^{-\frac12}\,\mathrm dx}<\frac{C_1}{100C_3}.
			\end{align}
			Then  it follows from \eqref{equ-10-5-1711}-\eqref{equ-10-5-1714} that
			\begin{eqnarray}\label{equ3.63}
				I_1
				\leq C_2(1-\rho_0)^{-\frac12}\frac{|E\bigcap\{x:|x|< \rho_0\lambda_k\}|}{\lambda_k}+\frac{C_1 }{100}.
			\end{eqnarray}

			For the term $I_2$, we make use of  Lemma \ref{lem2.1} again to find $k_4\in\mathbb{N}^+$ and $C_4>0$ so that
			\begin{align}\label{equ3.64}
				I_2\leq C_4\lambda_k^{-\frac{1}{2}}\leq \frac{C_1}{100}\;\;\mbox{for all}\;\;k\ge k_4.
			\end{align}

			%
			Similarly, for the term $I_3$,  we obtain from Lemma \ref{lem2.1} that there is some  constant $C_5>0$
			(independent of $k$) so that
			\begin{align}
				I_3
				\leq C_5\lambda_k^{-\frac12} \int_{\lambda_k+\delta}^{\infty}{(x-\lambda_k)^{-\frac12}\exp\left\{-\frac{2}{3}(x-\lambda_k)^{\frac32}\right\}\,\mathrm dx}.
			\end{align}
			By changing variable in the  integral above, we
			can find
			$k_5\in\mathbb{N}^+$ and $C_6>0$ so that
			\begin{eqnarray}\label{equ3.65}
				I_3\leq C_6\lambda_k^{-\frac12}\leq \frac{C_1}{100}\;\;\mbox{for all}\;\;k\ge k_5.
			\end{eqnarray}
			Therefore, it follows from \eqref{equ-10-5-1710}, \eqref{equ3.62}, \eqref{equ3.63}, \eqref{equ3.64}
			and \eqref{equ3.65} that there is  $C>0$, independent of $k$, so that
			\begin{eqnarray}\label{equ3.65-1801}
				\frac{|E\bigcap\{x:|x|<\rho_0\lambda_k\}|}{\lambda_k}\geq C,\;\;\mbox{when}\;\;k\ge k_6:=\max\{k_3,\,k_4,\,k_5\}.
			\end{eqnarray}
			Since $\lambda_k$  satisfies \eqref{equ2.3},  we have
			\begin{align}\label{equ-101-1}
				\varliminf_{\mathbb{N}\ni k\rightarrow  \infty} \frac{|E\bigcap[-\rho_0\lambda_k, \rho_0\lambda_k]|}{\rho_0\lambda_k} = \varliminf_{\mathbb{N}\ni k\rightarrow  \infty} \frac{|E\bigcap[-\rho_1 k^{\frac{2}{3}},\rho_1k^{\frac{2}{3}}]|}{\rho_1k^{\frac{2}{3}}},
			\end{align}
			where
			\begin{align*} \rho_1:=\rho_0\left(\frac{3\pi}{4}\right)^{\frac{2}{3}}.
			\end{align*}
			Meanwhile, from \cite[Lemma 4.4]{HWW}, we see that $E$ is \emph{weakly thick} is equivalent to
			\begin{align*}
				\varliminf_{\mathbb{N}\ni k \rightarrow  \infty}\limits \frac{|E\bigcap [-ak^l,ak^l]|}{ak^l} >0
			\end{align*}
			for some $a>0$ and $l>0$.
			This, along with  \eqref{equ3.65-1801} and \eqref{equ-101-1},
			yields  that $E$ is \emph{weakly thick}.
			
			Hence, we complete the proof of Lemma \ref{lem-low-h}.

			\section{Proof of the claim \eqref{equ-94-10}}\label{app-002}
			
			We first compute
			$$
			\det\mathbb{A}=
			\left|
			\begin{array}{cccc}
				e^{-\i \lambda_1\tau} & e^{-\i \lambda_12\tau}& \cdots & e^{-\i \lambda_1m\tau}\\
				e^{-\i \lambda_2\tau} & e^{-\i \lambda_22\tau}& \cdots & e^{-\i \lambda_2m\tau}\\
				\vdots& \vdots & \cdots & \vdots\\
				e^{-\i \lambda_m\tau} & e^{-\i \lambda_m2\tau}& \cdots & e^{-\i \lambda_mm\tau}
			\end{array}
			\right|.
			$$
			Applying the well-known result  for the Vandermonde determinant, we have
			\begin{align}\label{equ-det-1}
				\det\mathbb{A}=e^{-\i \tau\sum_{1\leq j\leq m}\lambda_j}\prod_{1\leq j<k\leq m}(e^{-\i \lambda_k\tau}-e^{-\i \lambda_j\tau}).
			\end{align}
			A direct computation shows that $|e^{\i x}-1|\geq \frac{1}{2}|x|$ for all $x\in[-1,1]$. Thus
			\begin{align}\label{equ-det-2}
				|e^{-\i \lambda_k\tau}-e^{-\i \lambda_j\tau}|=|e^{-\i (\lambda_k-\lambda_j)\tau}-1|\geq \frac{1}{2}\tau|\lambda_k-\lambda_j|
			\end{align}
			given that $|(\lambda_k-\lambda_j)\tau|\leq \lambda_m\frac{T}{10\lambda_m}<\frac{1}{10}$ when $T\in(0,1)$. It follows from \eqref{equ2.1} and 
			\eqref{equ2.4} that 
			\begin{align}\label{equ-det-3}
				\lambda_k-\lambda_j\geq \lambda_k-\lambda_{k-1}\geq c_1\lambda_k^{-\frac{1}{2}},\;\;\mbox{when}\;\;j<k.
			\end{align}
			
			Now, by \eqref{equ-det-1}-\eqref{equ-det-3}, we see
			\begin{align}\label{equ-det-4}
				|\det\mathbb{A}|\geq \prod_{1\leq j<k\leq m}\frac{c_1\tau}{2}\lambda_k^{-\frac{1}{2}}=(\frac{c_1\tau}{2})^{\frac{m^2}{2}}\prod_{1\leq k\leq m} \lambda_k^{-\frac{1}{2}k}.
			\end{align}
			Recall that $\lambda_k\leq c_2k^{\frac{2}{3}}$ (we assume $c_2>1$), we infer
			\begin{align}\label{equ-det-5}
				\prod_{1\leq k\leq m} \lambda_k^{-\frac{1}{2}k}\geq \prod_{1\leq k\leq m} c_2^{-\frac{1}{2}k}k^{-\frac{1}{3}k}\geq \prod_{1\leq k\leq m} c_2^{-\frac{1}{2}m}m^{-\frac{1}{3}k}=c_2^{-\frac{1}{2}m^2}m^{-\frac{1}{6}m(m+1)}.
			\end{align}
			Note that $\tau=\frac{T}{10m}$ (see \eqref{equ-94-8}), we deduce from \eqref{equ-det-4} and \eqref{equ-det-5} that
			\begin{align}\label{equ-det-6}
				|\det\mathbb{A}|\geq (\frac{c_1T}{20c_2})^{\frac{m^2}{2}}m^{-m^2}.
			\end{align}
			Let $\mathbb{A}^*=(A_{jk})_{1\leq j,k\leq m}$ be the adjugate matrix of $\mathbb{A}$. Since $A_{jk}$ is an $(m-1)\times (m-1)$ matrix and every element has unit modulus length, thus
			\begin{align}\label{equ-det-7}
				|A_{jk}|\leq (m-1)!, \;\;\mbox{when}\;\; 1\leq j,k\leq m.
			\end{align}
			Since $\mathbb{A}^{-1}=(\det \mathbb{A})^{-1}\mathbb{A}^*$, combining \eqref{equ-det-6} and \eqref{equ-det-7} leads to
			\begin{align}\label{equ-det-8}
				\|\mathbb{A}^{-1}\|\leq \left(\sum_{1\leq j,k\leq m} (\frac{A_{jk}}{\det \mathbb{A}})^2 \right)^{\frac{1}{2}}\leq m!(\frac{20c_2}{c_1T })^{\frac{m^2}{2}}m^{m^2}\leq (\frac{c_3m}{T})^{m^2}.
			\end{align}
			This completes the proof of \eqref{equ-94-10}.

		\end{appendix}

		\section*{Acknowledgements}
		S. Huang, G. Wang, and M. Wang are partially supported by the China National Natural Science Foundation under grants Nos. 12171178, 12371450 and 12171442, respectively.
		%

	\end{document}